\documentclass[11pt,reqno]{amsart}

\usepackage[a4paper,margin=1.08in]{geometry}
\usepackage{amsmath,amssymb,amsthm,mathtools,mathrsfs}
\usepackage{enumitem}
\usepackage{booktabs,longtable,array}
\usepackage{microtype}
\usepackage{xcolor}
\usepackage[colorlinks=true,linkcolor=blue,citecolor=blue,urlcolor=blue]{hyperref}

\numberwithin{equation}{section}

\newtheorem{theorem}{Theorem}[section]
\newtheorem{proposition}[theorem]{Proposition}
\newtheorem{lemma}[theorem]{Lemma}
\newtheorem{corollary}[theorem]{Corollary}
\theoremstyle{remark}
\newtheorem{remark}[theorem]{Remark}

\newcommand{\R}{\mathbb R}
\newcommand{\N}{\mathbb N}
\newcommand{\cS}{\mathcal S}
\newcommand{\cP}{\mathcal P}
\newcommand{\cZ}{\mathcal Z}
\newcommand{\cE}{\mathcal E}
\newcommand{\supp}{\operatorname{supp}}
\newcommand{\curl}{\operatorname{curl}}
\newcommand{\diver}{\operatorname{div}}
\newcommand{\dist}{\operatorname{dist}}
\newcommand{\eps}{\varepsilon}
\newcommand{\dd}{\,\mathrm d}
\newcommand{\Nbh}{\mathcal N}
\newcommand{\cX}{\mathcal X}
\newcommand{\cG}{\mathcal G}
\newcommand{\cU}{\mathcal U}

\title[Compactly supported strong ill-posedness for 2D Euler]
{Compactly Supported Strong Ill-Posedness of the
Two-Dimensional Euler Flow in $W^{3,1}(\mathbb R^2)$}

\author{Yingzhe Ban, Maotuo Guo, Qi Zhang}
\date{September 2026}

\subjclass[2020]{35Q31, 35B30, 35B35, 42B15, 76B03}
\keywords{two-dimensional Euler equations, endpoint Sobolev space,
strong ill-posedness, compact support}

\begin{document}

\begin{abstract}
We prove strong ill-posedness for the two-dimensional incompressible Euler
equation in the endpoint velocity space
$W^{3,1}_\sigma(\mathbb R^2)$.  Given any compactly supported smooth
divergence-free background and any $\delta>0$, we construct a single
compactly supported divergence-free datum within $\delta$ of the background
in $W^{3,1}$.  The perturbation can be localized inside any prescribed
nonempty open neighborhood of the background support while remaining
disjoint from that support.
The associated unique global finite-energy solution $u$ satisfies
$\curl u\in C([0,\infty);W^{2,1}(\mathbb R^2))$, but
\[
 \operatorname*{ess\,sup}_{0<t<T}\|u(t)\|_{W^{3,1}}=\infty
 \qquad\text{for every }T>0.
\]
The proof combines localized hyperbolic insertion, the Bonami--Poornima
nonmultiplier mechanism, and many-packet dilution.  Moment cancellation and
positive material separation control the nonlocal Biot--Savart interactions,
allowing a nested construction inside one fixed compact region.  Without the
compact-support requirement on the limiting datum, a complementary
far-field argument yields a dense $G_\delta$ set of data whose solutions
leave $W^{3,1}$ at every positive rational time and on a residual set of
times, with unbounded essential supremum on every nonempty open time
interval.
\end{abstract}

\maketitle

\section{Introduction and main results}
\label{sec:intro}

\subsection{The endpoint velocity problem}

We consider the incompressible Euler equations on the full plane:
\begin{equation*}
 \partial_tu+u\cdot\nabla u+\nabla p=0,
 \qquad \diver u=0,
 \qquad u|_{t=0}=u_0.
\end{equation*}
For smooth divergence-free data with sufficient decay, the corresponding
solution is smooth for all time in two dimensions.  We denote by
$\cS_T^\infty u_0=u(T)$ the time-$T$ classical solution map.

With the conventions fixed in Section~\ref{sec:functional}, the vorticity
$\omega=\curl u$ obeys
\begin{equation*}
 \partial_t\omega+u\cdot\nabla\omega=0,
 \qquad
 u=\nabla^\perp(-\Delta)^{-1}\omega=K*\omega.
\end{equation*}
At the level of derivatives, the implication
\[
 u\in W^{3,1}(\R^2)\quad\Longrightarrow\quad
 \omega\in W^{2,1}(\R^2)
\]
is immediate.  The converse is false as a general elliptic estimate: the
highest derivatives of the velocity contain second-order Riesz transforms
of $D^2\omega$, and Calder\'on--Zygmund operators are not bounded on
$L^1$.  Consequently, persistence of $W^{2,1}$ vorticity does not by itself
imply persistence of $W^{3,1}$ velocity.

The space $W^{3,1}$ is the $p=1$ endpoint of the borderline velocity scale
$W^{1+2/p,p}$ that appears in the critical well-posedness theory.  For
$1<p<\infty$, the Hodge reconstruction is governed by Calder\'on--Zygmund
operators bounded on $L^p$.  At $p=1$ this basic mechanism fails, so the
endpoint cannot be obtained by formally passing to the limit in the
$p>1$ theory.

This distinction is especially relevant because recent work establishes
first short-time and then global propagation of $W^{2,1}(\R^2)$ vorticity
in two dimensions \cite{CozziHarrison2025,DanchinJeong2026}.  Related
persistence results are known in the endpoint Triebel--Lizorkin scale
$F^3_{1,\infty}$ \cite{HwangPak2025}.  These positive results leave open the
behavior of the endpoint Hodge reconstruction required for
$W^{3,1}$ velocity.

Compact support on the full plane introduces a second difficulty.  Compactly
supported vorticity generally generates a velocity with algebraic tails,
whereas constructing a compactly supported velocity from a compact stream
function forces additional moment cancellations on its vorticity.  Moreover,
the Biot--Savart law couples spatially separated regions instantaneously.  A
localized instability mechanism must therefore control both the far field
of the inserted perturbation and the response induced on the background
vorticity region.

The two-dimensional global theory originates in the work of Wolibner and
Yudovich and is developed in the geometric, Sobolev, and vorticity
formulations of
\cite{Wolibner1933,Yudovich1963,Kato1972,EbinMarsden1970,
MajdaBertozzi,Chemin1998}.  Relevant continuation and endpoint regularity
criteria include
\cite{BealeKatoMajda1984,BourguignonBrezis1974,Chae2003,
Chae2004,PakPark2004,Vishik1998,CheskidovShvydkoy2010}.  The regularity of
the Eulerian solution map is subtle even in classically well-posed Sobolev
regimes \cite{Inci2015}.

Endpoint and borderline ill-posedness mechanisms for Euler have been
developed in critical Sobolev, Besov, H\"older, and related scales; see,
among others,
\cite{BourgainLi2D2015,BourgainLi3D2015,BourgainLiCm2015,
ElgindiJeong2017,ElgindiMasmoudi2020,KimJeong2022,
MisiolekYoneda2016,Jeong2021,Kwon2021,CozziHarrisonRadke2025}.
Two levels of instability are relevant here.  Fixed-time norm inflation for
smooth data shows that the classical solution map is not locally bounded in
the endpoint topology.  Strong ill-posedness for a single datum requires an
additional condensation argument: one must pass from a sequence of finite
smooth amplifiers to one $W^{3,1}$ datum while retaining loss of uniform
control on every positive time interval.

For $1<p<\infty$, the Bourgain--Li strong formulation produces critical
velocity data arbitrarily close to smooth backgrounds for which no
$L^\infty_tW^{1+2/p,p}_x$ evolution is possible on any positive time
interval \cite{BourgainLi2D2015}.  Theorem~\ref{thm:compact-main} provides a
$p=1$ analogue at the velocity endpoint
$W^{1+2/p,p}=W^{3,1}$, in the sense of failure of
$L^\infty_tW^{3,1}_x$ control, while keeping the datum and perturbation
compactly supported.  A complementary Baire-category result gives a dense
$G_\delta$ set of bad data when compact support of the limiting datum is not
required.  Unlike mechanisms based on growth of the Lipschitz norm, the
present construction exploits the failure of anisotropically conjugated
Hodge reconstruction at the integer $L^1$ endpoint.

\subsection{Main compact-support results}

Set
\begin{equation*}
 \cX:=W^{3,1}_\sigma(\R^2)
 =\{u\in W^{3,1}(\R^2;\R^2):\diver u=0\}.
\end{equation*}
This is a closed subspace of $W^{3,1}$ and hence a Banach space.  Whenever a
velocity is not in $W^{3,1}$, its $W^{3,1}$ norm is understood to be
$+\infty$.  By Proposition~\ref{prop:endpoint-flow}, every $u_0\in\cX$
generates a unique global finite-energy solution whose vorticity belongs to
$C([0,\infty);W^{2,1})$; we denote it by $\cS_tu_0=u(t)$.

Our principal result keeps the limiting datum compactly supported and gives
arbitrarily short-time strong inflation near an arbitrary compactly
supported smooth datum.

\begin{theorem}[Compactly supported short-time strong ill-posedness]
\label{thm:compact-main}
Let $\bar u_0\in C_{c,\sigma}^\infty(\R^2)$, let $\delta>0$, and let
$\mathcal O\subset\R^2$ be any nonempty open neighborhood of
$S:=\supp\bar u_0$.  Let also $(\tau_n)$ and $(M_n)$ be arbitrary positive
sequences with $\tau_n\downarrow0$ and $M_n\uparrow\infty$.  There exist a
compactly supported datum $u_0\in\cX$ and a decreasing sequence
$t_n\downarrow0$ such that
\begin{align}
 &\supp(u_0-\bar u_0)\Subset\mathcal O\setminus S,
 \qquad \supp u_0\Subset\mathcal O,
 \qquad \|u_0-\bar u_0\|_{W^{3,1}}<\delta,
 \label{eq:compact-main-data}\\
 &0<t_n<\tau_n,
 \qquad \|\cS_{t_n}u_0\|_{W^{3,1}}>M_n.
 \label{eq:compact-main-thresholds}
\end{align}
Moreover, for every $T>0$,
\begin{equation}\label{eq:compact-main-essential}
 \operatorname*{ess\,sup}_{0<t<T}
 \|\cS_tu_0\|_{W^{3,1}}=\infty.
\end{equation}
\end{theorem}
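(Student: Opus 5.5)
The plan is a Bourgain--Li type nested construction, carried out inside one fixed compact region $K\Subset\mathcal O\setminus S$ (with $\dist(K,S)>0$). Write $u_0=\bar u_0+\sum_{n\ge1}v_n$, where each $v_n=\nabla^\perp\psi_n$ has a compactly supported smooth stream function $\psi_n$, is supported in a small ball $B_n\subset K$, and has $\|v_n\|_{W^{3,1}}<\delta 2^{-n}$. Then $\supp(u_0-\bar u_0)\subset K$, the series converges in $\cX$, and \eqref{eq:compact-main-data} holds. I take the balls $B_n$ pairwise disjoint, accumulating at a single point of $K$, with $\dist(B_n,B_m)$ and $\dist(B_n,S)$ bounded below in terms of the radii. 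This is the ``positive material separation.''

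\textbf{The single-packet amplifier.} Each $v_n$ is built from the hyperbolic insertion together with the Bonami--Poornima nonmultiplier mechanism. Inside $B_n$ I place a small-amplitude cell $\psi=\eta\,\phi(x)$, where $\phi$ is chosen so that $\curl v$ is small in $W^{2,1}$. The cell produces a local hyperbolic stagnation point with strain $\lambda$. The flow then conjugates a high-frequency packet anisotropically. By the Bonami--Poornima fact that $\partial_i\partial_j\Delta^{-1}$ composed with anisotropic dilation is not uniformly bounded on $L^1$ (the relevant transform is unbounded on $L^1$, so dilated packets can be chosen with $\|D^3u\|_{L^1}/\|D^2\omega\|_{L^1}$ as large as desired), the packet has $\|D^3 v(t)\|_{L^1}\gtrsim \log$-growth in $e^{\lambda t}$. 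Many-packet dilution, i.e.\ summing $N$ disjoint small copies with amplitude $N^{-1}$, keeps the initial $W^{3,1}$ norm below $\delta2^{-n}$. Meanwhile the inflation at a time $s_n<\tau_n$ exceeds $2M_n+2$, on a whole interval $I_n\subset(0,\tau_n)$ of positive length. Moment cancellation (vanishing moments of $\omega$ up to high order, automatic for $\omega=\Delta\psi$ with $\psi$ compactly supported, and arranged further) makes the far field of $v_n$ decay like $|x|^{-k}$ with small constants. This controls its effect on $\bar u_0$ and on other packets.

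\textbf{Stability and the nested choice.} The packets are chosen inductively. Given $u^{(n-1)}_0=\bar u_0+\sum_{m<n}v_m$ (smooth, compactly supported), its solution is smooth on $[0,1]$. I then choose $v_n$ so that $\|\cS_t u^{(n)}_0\|_{W^{3,1}}>M_n+1$ on $I_n$. This uses the $C^k$ control of the background near $B_n$: the background velocity at $B_n$ is smooth, so the insertion is done in Lagrangian coordinates around its trajectory. After fixing $v_n$, I require all later $v_m$, $m>n$, to be so small in $W^{3,1}$, and so spectrally and spatially separated, that $\sup_{t\in \overline{I_n}}\|\cS_tu^{(m)}_0-\cS_tu^{(m-1)}_0\|_{W^{3,1}}<2^{-m}$.

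Passing to the limit uses Proposition~\ref{prop:endpoint-flow}. The vorticities converge in $C([0,1];W^{2,1})$, and lower semicontinuity of the $W^{3,1}$ norm under distributional convergence gives $\|\cS_tu_0\|_{W^{3,1}}>M_n$ for $t\in I_n$. Choosing $t_n\in I_n$ decreasing gives \eqref{eq:compact-main-thresholds}. Since each $I_n$ has positive measure, \eqref{eq:compact-main-essential} follows.

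\textbf{The main obstacle.} The hard step is the stability estimate. Later tiny perturbations must not destroy the inflation already achieved, but the solution map is not continuous in $W^{3,1}$; that is the whole point. I would circumvent this without ever using $W^{3,1}$ continuity. Instead I prove a difference estimate in the weaker $W^{2,1}$ vorticity topology, valid on finite time. Then I use separation: $v_m$ for $m>n$ lives at distance $\gtrsim r_n$ from the Lagrangian image of $B_n$ on $[0,1]$. The velocity it induces there is therefore smooth, with $C^k$ norm bounded by its moments times $r_n^{-k}$. This makes the interaction perturbation smooth near packet $n$, and the $W^{3,1}$ norm of the nonlocal Hodge part can then be bounded by $C^k$ norms, which are continuous. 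I expect the bookkeeping of this localization argument to be the bulk of the proof.
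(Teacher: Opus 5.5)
There is a genuine gap, located at the step you identify as the main obstacle.

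\textbf{The stability requirement cannot be met as stated.} The bound $\sup_{t\in\overline{I_n}}\|\cS_tu_0^{(m)}-\cS_tu_0^{(m-1)}\|_{W^{3,1}}<2^{-m}$ is a \emph{global} $W^{3,1}$ bound on the effect of $v_m$. But $v_m$ is itself an amplifier, and its top-order output near $B_m$ is not small at the later times $t\in I_n$. If, as you say, its gain grows with $t$, it is even larger there than at $s_m$. Your separation argument only controls the solution near the Lagrangian image of $B_n$. At best it gives a localized bound, which you would still have to prove, including the vorticity response that $v_m$ generates on $S$ and on the earlier packets. None of this is carried out.

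\textbf{The limit passage runs lower semicontinuity the wrong way.} From $\cS_tu_0^{(m)}\to\cS_tu_0$ in distributions you get $\mathfrak V_3(\cS_tu_0)\le\liminf_m\mathfrak V_3(\cS_tu_0^{(m)})$. This is an \emph{upper} bound for the limit, so lower bounds on the approximants do not transfer. It would only work applied to the differences $\cS_tu_0^{(m)}-\cS_tu_0^{(n)}$, that is, after the unproved stability estimate. Even then it must be done with the total-variation functional $\mathfrak V_3$, because the extended $W^{3,1}$ norm is not lower semicontinuous. Also, Proposition~\ref{prop:endpoint-flow} does not give convergence of vorticities in $C([0,1];W^{2,1})$. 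It gives only $L^2$ continuity of $\cS_t$ at fixed $t$, which is in fact all that is needed.

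\textbf{The missing idea is to apply lower semicontinuity at the smooth centers, not at the limit.} Since $\cS_t:\cX\to L^2$ is continuous and $\mathfrak V_3$ is lower semicontinuous under distributional convergence, the map $u_0\mapsto\mathfrak V_3(\cS_tu_0)$ is lower semicontinuous on $\cX$ (Lemma~\ref{lem:V3-lsc}). Hence $\{v_0\in\cX:\mathfrak V_3(\cS_{t_n}v_0)>C_3M_n\}$ is \emph{open}. The construction then runs as follows.
\begin{itemize}
\item Apply Proposition~\ref{prop:local-insertion} in a ball $B_n$ where the current center vanishes. This produces a smooth $u_0^{(n)}$ with $\mathfrak V_3(\cS_{t_n}u_0^{(n)})>C_3M_n$ at a single time $t_n<\min\{\tau_n,t_{n-1}/2\}$.
\item Choose a closed ball $\mathcal C_n=\overline B_{\cX}(u_0^{(n)},\rho_n)$ inside this open set and inside $\operatorname{int}\mathcal C_{n-1}$.
\item Make every later insertion small enough in $W^{3,1}$ that the centers stay in $\mathcal C_n$.
\end{itemize}
The limit lies in every $\mathcal C_n$ and so inherits every inflation. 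This needs no $W^{3,1}$ or $W^{2,1}$ stability estimate, no separation bookkeeping beyond disjoint supports, and no inflation on whole intervals $I_n$. The positive-measure statement \eqref{eq:compact-main-essential} then follows from lower semicontinuity in $t$ of $\mathfrak V_3(\cS_tu_0)$ (Corollary~\ref{cor:V3-time-lsc}). Finally, $\mathfrak V_3\le C_3\|\cdot\|_{W^{3,1}}$ converts everything back to the $W^{3,1}$ statement.
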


The lower-semicontinuity argument in
Section~\ref{sec:constructive-condensation} upgrades the threshold sequence
\eqref{eq:compact-main-thresholds} to the essential-supremum conclusion
\eqref{eq:compact-main-essential}.  The upper bounds $\tau_n$ are arbitrary,
whereas the activation times $t_n$ are selected adaptively.  Prescribed-time
and residual-time conclusions are obtained in Theorem~\ref{thm:strong}
without requiring compact support of the limiting datum.  Since
$C_{c,\sigma}^\infty$ is dense in $\cX$, the compactly supported data
satisfying \eqref{eq:compact-main-essential} are themselves dense in $\cX$.

The constructive core is the following smooth fixed-time theorem.  The
homogeneous conclusion is recorded because it is the form used in the
nested-ball diagonal construction.

\begin{theorem}[Strong fixed-time norm inflation at zero]
\label{thm:zero}
For every $T>0$, every $\delta>0$, and every $M>0$, there exists
$u_0\in C_c^\infty(\R^2;\R^2)$ with $\diver u_0=0$ such that
\begin{equation*}
 \|u_0\|_{W^{3,1}(\R^2)}<\delta,
\end{equation*}
but the corresponding classical Euler solution satisfies
\begin{equation*}
 \|\cS_T^\infty u_0\|_{\dot W^{3,1}(\R^2)}>M.
\end{equation*}
In particular, its full $W^{3,1}$ norm is also larger than $M$, and
\begin{equation}\label{eq:local-unbounded-zero}
 \sup_{\substack{u_0\in C_c^\infty(\R^2;\R^2),\ \diver u_0=0\\
                  \|u_0\|_{W^{3,1}}<\delta}}
 \|\cS_T^\infty u_0\|_{\dot W^{3,1}}=\infty
 \qquad(T,\delta>0).
\end{equation}
\end{theorem}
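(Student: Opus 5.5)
Our plan is to build the small datum as a superposition of many well-separated, rescaled copies of one smooth "amplifier packet". Each packet is transported by a localized hyperbolic (saddle-type) background flow, and that flow conjugates the packet's vorticity by an anisotropic linear map. The abstract names the two ingredients: localized hyperbolic insertion and the Bonami--Poornima nonmultiplier mechanism. The relevant fact is that the Fourier multipliers $\xi_i\xi_j\xi_k\xi_l/|\xi|^4$, which map $D^2\omega$ to the top derivatives of $u$, are not $L^1$-multipliers. Under the stretching $\Phi_\lambda(x_1,x_2)=(\lambda x_1,\lambda^{-1}x_2)$ their $L^1\to L^1$ norms on suitable test functions grow without bound in $\lambda$. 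Equivalently, we will find a smooth compactly supported $\omega_\ast$ with $\|\omega_\ast\|_{W^{2,1}}\lesssim1$ for which $\|\nabla^3 K*(\omega_\ast\circ\Phi_\lambda)\|_{L^1}\gtrsim \log\lambda$ (or some other unbounded function $g(\lambda)$), while $\|\omega_\ast\circ\Phi_\lambda\|_{W^{2,1}}$ stays controlled up to polynomial factors in $\lambda$.

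\textbf{Step 1 (hyperbolic carrier).} Take a smooth compactly supported vortex configuration whose stream function equals $\gamma x_1x_2$ on a ball $B$. Such a background can have tiny $W^{3,1}$ norm: scale its amplitude by $\eta$ and accept the long time $\sim 1/(\gamma\eta)$. Since $T$ is fixed, we instead keep the amplitude fixed and make the support of the strain region small. By scaling, the $W^{3,1}$ norm of a velocity $a\,v(x/r)$ is $\sim a r^{2-3}\cdot r\ldots$; the scalings will be fixed so that the norm is $\le\delta/2$ and the strain rate $\gamma$ is $O(1)$. On $[0,T]$ this gives $\lambda=e^{\gamma T}$, which is bounded. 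To obtain an unbounded $\lambda$ we therefore use \emph{dilution}. We do not need an unbounded $\lambda$ for a single packet. Instead we use $N$ packets, each of $W^{3,1}$-size $\eps/N$, placed at distinct positions inside $B$. The stretched images of each packet contribute a lower bound $c\,(\eps/N)\,A(\lambda)$, where $A(\lambda)>0$ is a fixed positive defect, but their $W^{3,1}$ contributions add up incoherently. This does not beat linearity by itself. So the real gain must come from combining it with scaling: a packet $\omega_\ast(x/\rho)$ of small physical scale $\rho$ has velocity $\dot W^{3,1}$ norm homogeneous of degree $0$ in $\rho$, which keeps the packet norms scale-invariant. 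The dilution is then arranged so that, in the $W^{3,1}$ norm, the initial packets cancel against a profile built from Bonami--Poornima: initially we make $u_0$'s top derivatives small by choosing the packet with special moment or symmetry cancellation, while the stretched profile loses that cancellation.

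\textbf{Step 2 (passive regime and error control).} We show that, for packets of small amplitude and small scale inside $B$, the Euler evolution is to leading order a linear transport by the background: $\omega(t)\approx\omega_\ast\circ\Phi_{\lambda(t)}^{-1}$. The error is bounded in $W^{2,1}$ by using the $W^{2,1}$ propagation of Proposition~\ref{prop:endpoint-flow} together with Gr\"onwall. Biot--Savart interactions between packets, and between the packets and the background, are controlled by moment cancellation (vanishing low moments make far fields decay fast) and by positive separation. We then convert the $W^{2,1}$ vorticity error into a $W^{3,1}$ velocity error only where the velocity is smooth away from the support, and use $\dot W^{3,1}\supset$ lower bounds on the main term.

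\textbf{Main obstacle.} The crux is the quantitative nonmultiplier lower bound. We must produce data whose initial $W^{3,1}$ velocity norm is $<\delta$ while its anisotropic image at the fixed time $T$ has $\dot W^{3,1}$ norm $>M$. This requires exploiting the fact that $L^1$-boundedness of the reconstruction fails uniformly over the family $\{\Phi_\lambda\}$ with $\lambda$ in a compact range, for instance via rotation or composition with the flow $x\mapsto$ a genuinely nonlinear shear. We would try first an explicit family: superpose many oscillating wave packets $\omega_k$ in which $\nabla^3K*\omega_k$ is $L^1$-small because of destructive interference, and rely on the flow map's differential to misalign their phases so that the interference becomes constructive. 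The lower bound is then a sum of $N$ comparable terms, growing like $N$ (or like $\log N$, as in Bonami--Poornima), and letting $N\to\infty$ gives $M$.
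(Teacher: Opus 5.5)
Your plan has a genuine gap at its center: it never supplies a mechanism that produces unbounded gain. The candidates you list are growth in $\lambda$, ``special moment or symmetry cancellation'', and phase-misaligned wave packets. The first is unavailable at fixed $T$ with small data, and the other two are left unspecified. The missing idea is that no large or growing deformation is needed. For \emph{every fixed} nonconformal $B\in SL(2,\R)$, the symbol $m_B(\xi)=|\xi|^2/|B^{-T}\xi|^2$ is nonconstant and homogeneous of degree zero. By Bonami--Poornima (Theorem~\ref{thm:BP}, plus a cutoff reduction to $C_c^\infty$), there are therefore profiles $f$ with $\|f\|_{\dot W^{4,1}}\le1$ and $\|m_B(D)f\|_{\dot W^{4,1}}>Q$ for arbitrary $Q$. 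The stream function of the transported vorticity $(-\Delta f)(B^{-1}\cdot)$, pulled back by $B$, is exactly $m_B(D)f$. So this is a passive $\dot W^{3,1}$ velocity gain $Q$ at a bounded deformation, e.g.\ $B=D\bar\Phi(\tau,0,0)$ for any small $\tau>0$. Your assumption that a fixed $\lambda$ yields only a fixed defect $A(\lambda)$ is exactly what is false.

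With this in hand, dilution plays a different role from the one you assign it: it is not a source of gain but a way to make the nonlinearity perturbative. Let $a_*$ be the amplitude below which the self-advecting profile stays within $Q/4$ of the passive one; it depends badly on $f$. So one fixes $f$ first, then takes $J$ copies of amplitude $a=\eps/J<a_*$ at scale $r$. This keeps the initial cost $\approx Ja=\eps$ and the output $\ge JaQ/2=\eps Q/2$, and only then is $r$ sent to zero.

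Two other steps fail as written.

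First, spatial rescaling cannot give a background with $O(1)$ strain and small $W^{3,1}$ norm. The $\dot W^{3,1}$ seminorm is scale-invariant for planar velocities and dominates $\|\nabla u\|_{L^\infty}$. The paper instead proves inflation near a fixed, non-small hyperbolic background at a short time $\tau$. It then transfers this to small data at time $T$ via the amplitude--time symmetry $\cS_T^\infty(\lambda U_0)=\lambda\cS_{\lambda T}^\infty U_0$ with $\lambda=\tau/T$. Near the background this requires output of order $M/\lambda$, which is affordable only because $Q$ is unbounded at fixed $\tau$.

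Second, you propose to estimate the error in $W^{2,1}$ vorticity and then convert it into a $\dot W^{3,1}$ velocity error. That is precisely the Calder\'on--Zygmund step that fails in $L^1$. Restricting to regions away from the packet supports does not rescue it, since nothing forces the amplified $L^1$ mass of $D^3v$ to avoid those supports. The paper works instead in moving rescaled coordinates $x=X_{q_j}(t)+ry$. There it bounds the coefficient errors in high $H^s$ norms on a fixed compact support:
\begin{itemize}
\item the Taylor remainder of the background is $O(r)$;
\item the cross-packet field is $O(r^2)$;
\item the external response is $O(a^{-1}r^2)$, using zero circulation and material separation.
\end{itemize}
It bounds the resulting profile difference in the same $H^s$ norms. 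Only afterwards does it pass to $L^1$ on the bounded observation ball $B_R$ by H\"older's inequality.
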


\subsection{The endpoint mechanism and successive reductions}

Let $A\in SL(2,\R)$ be an area-preserving affine deformation.  If a
vorticity $-\Delta f$ is transported by $A$ and the velocity is then
reconstructed by the Biot--Savart law, the stream function pulled back to
the initial coordinates is acted on by
\begin{equation*}
 m_A(\xi)=\frac{|\xi|^2}{|A^{-T}\xi|^2}.
\end{equation*}
When $A$ is nonconformal, $m_A$ is a nonconstant homogeneous function of
degree zero.  Bonami and Poornima proved that such a symbol cannot be a
Fourier multiplier on any integer homogeneous Sobolev space
$\dot W^{k,1}$ \cite{BonamiPoornima1987}; see also the later multiplier
rigidity results in \cite{Curca2022,Ornstein1962,Stein1970}.  At $k=4$ this
creates arbitrarily large passive velocity amplification in
$\dot W^{3,1}$.

The passage from the full Euler equation to this passive affine response is
made through a sequence of exact decompositions followed by two controlled
approximations.  First, writing $U=\bar u+v$ gives the exact perturbation
equation around a smooth background.  The perturbation vorticity is then
split exactly into transported packet components and the response generated
on the background-vorticity region, and each packet equation is written in
moving rescaled coordinates.  In those coordinates, replacing the
background by its affine Taylor part and discarding the cross-packet and
external fields produces the nonlinear affine profile; the resulting
coefficient errors tend to zero with the packet scale $r$.  Finally, setting
the self-interaction coefficient $a$ to zero produces the passive profile,
with an error controlled by $a$.  Thus the affine and passive equations are
not uncontrolled linearizations of the original dynamics: the two passages
are quantified separately by $r\to0$ and $a\to0$.

An unbounded linearized response is not, by itself, enough to prove
nonlinear discontinuity.  The central nonlinear idea is to split a small
total perturbation into many copies.  If $J$ is the number of packets and
$a$ the amplitude of one packet, we impose
\begin{equation*}
 a=\frac{\eps}{J},
 \qquad Ja=\eps.
\end{equation*}
For the critical velocity packet used below, the top-order initial cost of
one copy is comparable to $a$, hence the total initial cost is
$Ja=\eps$.  In its own moving microscopic coordinates, however, the
nonlinear self-advection is multiplied by $a$.  Therefore increasing $J$
makes each copy perturbative without decreasing the total amplified output,
which is comparable to
\[
 J a Q=\eps Q,
\]
where $Q$ is the passive endpoint gain of one fixed profile.

The full-space geometry creates two additional issues absent from the affine
model.  First, packet centers move along trajectories of a
time-dependent compactly supported background, and the background has a
Taylor remainder.  Second, the Biot--Savart law has a long range, so one
must control both low frequencies and the feedback generated on the
background-vorticity region.  Each packet has zero circulation and physical
$L^1$ vorticity of order $ar^2$.  At a macroscopic distance its velocity is
therefore of order
\[
 r\,\|\Theta_j\|_{L^1}=O(ar^3).
\]
This is the source of the $O(r^3)$ external-response estimate.  After
rescaling by the microscopic velocity scale $ar$, the cross-packet and
external fields are respectively $O(r^2)$ and $O(a^{-1}r^2)$.  Since $a$ is
fixed before $r\to0$, the dominant coefficient error is the $O(r)$ Taylor
remainder of the background.  A high-order bootstrap then gives the
moving-packet approximation with $O(r)$ error.

\subsection{Proof architecture and parameter order}

The proof is organized as follows.
\begin{enumerate}[label=\textup{(\arabic*)},leftmargin=2.6em]
 \item Section~\ref{sec:functional} records the endpoint and localized
 Biot--Savart estimates, then constructs a compactly supported background
 with a material zero-vorticity region and nonconformal deformation.
 Section~\ref{sec:affine} converts that geometry into an endpoint affine
 amplifier and controls its small-amplitude nonlinear perturbation.

 \item Section~\ref{sec:packets} embeds finitely many copies of the affine
 profile into the Euler flow and proves the packet realization proposition
 by combining the macroscopic $O(r^3)$ response estimate with the $O(r)$
 moving-coordinate approximation.  Section~\ref{sec:local-proof} then sums
 the disjoint $L^1$ outputs and uses amplitude--time scaling to obtain
 fixed-time norm inflation at zero.

 \item Section~\ref{sec:compact-insertion} inserts a fresh finite amplifier
 into an unused ball of an arbitrary compactly supported smooth background.
 The global endpoint flow, lower semicontinuity, and a nested-ball argument
 inside one fixed compact region then prove
 Theorem~\ref{thm:compact-main}.

 \item Section~\ref{sec:far-superposition} proves the complementary
 prescribed-time result by far-field superposition and a rational-time
 diagonal construction.
\end{enumerate}

The order of parameter choices in the packet construction is equally
important.  For a target triple $(\tau,\delta,M)$, it is
\begin{equation*}
 \eps\longrightarrow Q\longrightarrow (f,R,h)
 \longrightarrow a_*\longrightarrow J
 \longrightarrow a=\eps/J
 \longrightarrow (q_1,\ldots,q_J)
 \longrightarrow r.
\end{equation*}
The profile $f$ is fixed before the amplitude is made small, and no estimate
uniform over the bad profiles is required.  Once $f$ is fixed, $J$ is chosen
to force $a<a_*$.  Only after $J$ and all labels have been fixed is the
packet scale $r$ sent to zero.  This order is recorded quantitatively in
Section~\ref{sec:parameters}.

\section{Preliminaries and background geometry}
\label{sec:functional}

\subsection{Conventions and elementary estimates}

We use
\begin{equation*}
 \nabla^\perp=(\partial_2,-\partial_1),
 \qquad
 \curl u=\partial_1u_2-\partial_2u_1.
\end{equation*}
Thus
\[
 \curl(\nabla^\perp\psi)=-\Delta\psi.
\]
The full-space Biot--Savart kernel is
\begin{equation*}
 K(x)=\frac1{2\pi}\frac{(-x_2,x_1)}{|x|^2},
 \qquad
 u=K*\omega=\nabla^\perp(-\Delta)^{-1}\omega.
\end{equation*}
For an integer $m\ge0$, set
\[
 \|F\|_{\dot W^{m,1}(\Omega)}
 =\sum_{|\alpha|=m}\|\partial^\alpha F\|_{L^1(\Omega)}.
\]
All tensor norms used for $D^mF$ are fixed once and for all.  Since these
are finite-dimensional, changing the tensor norm only changes constants.

For a set $E\subset\R^2$ and $\rho>0$, let
\[
 \Nbh_\rho(E)=\{x\in\R^2:\dist(x,E)<\rho\}.
\]
When $X(t)$ is a trajectory, the notation $X(t)+rB_R$ means
$\{X(t)+ry:y\in B_R\}$.

The highest-order seminorm is the part amplified by the construction, but
on the full plane the low-order terms of $W^{3,1}$ require decay at
infinity.  The following moment lemma makes this point explicit.

\begin{lemma}[Moment cancellation and velocity decay]
\label{lem:moments}
Let $\omega\in C_c^\infty(\R^2)$ satisfy
\begin{equation}\label{eq:moment-zero}
 \int_{\R^2}\omega(x)\,\dd x=0,
 \qquad
 \int_{\R^2}x\,\omega(x)\,\dd x=0.
\end{equation}
Then $u=K*\omega$ belongs to $W^{m,1}(\R^2)$ for every integer $m\ge0$.
More precisely, if $\supp\omega\subset B_R$, then for $|x|\ge2R$,
\begin{equation}\label{eq:far-decay}
 |D^mu(x)|
 \le C_m|x|^{-m-3}
 \int_{B_R}|y|^2|\omega(y)|\,\dd y.
\end{equation}
If $\omega(t)$ is a smooth compactly supported two-dimensional Euler
vorticity and \eqref{eq:moment-zero} holds at $t=0$, then both cancellations
hold for all time.  In particular, if $\psi\in C_c^\infty(\R^2)$ and
$u_0=\nabla^\perp\psi$, then the corresponding classical Euler velocity
belongs to $W^{m,1}(\R^2)$ for every finite time and every integer $m\ge0$.
\end{lemma}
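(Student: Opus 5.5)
The plan is to Taylor-expand the kernel, use conservation laws for the moments, and get near-field integrability from local Calderón--Zygmund bounds.

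\emph{Far-field decay.} For $|x|\ge 2R$ and $|y|<R$, expand $D^mK(x-y)$ in $y$ to second order with integral remainder:
\[
D^mK(x-y)=D^mK(x)-y\cdot\nabla D^mK(x)+\mathcal R_m(x,y),
\qquad |\mathcal R_m(x,y)|\le C_m|y|^2|x|^{-m-3}.
\]
The remainder bound holds because $K$ is homogeneous of degree $-1$ and smooth away from the origin, and every point $x-sy$ on the segment satisfies $|x-sy|\ge|x|/2$. Integrating against $\omega(y)$, the zeroth- and first-order terms vanish by \eqref{eq:moment-zero}. This gives \eqref{eq:far-decay}.

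\emph{Integrability of $D^mu$.} Since $|x|^{-m-3}$ is integrable on $\{|x|\ge 2R\}$ in two dimensions for every $m\ge0$, $D^mu\in L^1(\{|x|\ge2R\})$. On $B_{2R}$ the velocity $u=K*\omega$ is smooth, because $\omega\in C_c^\infty$ and $D^mu=K*D^m\omega$ with $K\in L^1_{\mathrm{loc}}$. So $D^mu$ is bounded on the compact set $\overline{B_{2R}}$. Hence $u\in W^{m,1}$ for every $m$.

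\emph{Conservation of the moments.} Total circulation is conserved because
\[
\frac{d}{dt}\int\omega=-\int u\cdot\nabla\omega=\int(\diver u)\,\omega=0,
\]
with no boundary terms since $\omega(t)$ is compactly supported. For the first moment,
\[
\frac{d}{dt}\int x\,\omega=\int u\,\omega .
\]
The step that needs care is showing $\int u\,\omega\,dx=0$. Write
\[
\int K*\omega\,\omega=\iint K(x-y)\,\omega(x)\,\omega(y)\,dx\,dy .
\]
This vanishes by oddness of $K$ and symmetry under swapping $x$ and $y$. The double integral converges absolutely since $K\in L^1_{\mathrm{loc}}$ and $\omega$ is bounded with compact support, so the symmetrization is justified. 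Alternatively, one can use the identity $u\,\omega=-\diver(u\otimes u)+\nabla(|u|^2/2)$ in two dimensions. That route requires decay of $u\otimes u$ at infinity, which holds since $|u|\lesssim|x|^{-2}$ once the circulation vanishes. I would use the symmetrization. Compact support of $\omega(t)$ persists because the flow map is a homeomorphism with bounded velocity, so both cancellations hold for all $t$.

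\emph{Compact stream functions.} If $u_0=\nabla^\perp\psi$ with $\psi\in C_c^\infty$, then $\omega_0=-\Delta\psi$. Integrating by parts, $\int\omega_0=0$ and $\int x_i\,\omega_0=-\int\psi\,\Delta x_i=0$. The classical solution has smooth compactly supported vorticity $\omega(t)$ for every finite time, and $u(t)=K*\omega(t)$. The last point holds because $u(t)$ and $K*\omega(t)$ are both divergence-free with the same curl and both decay, so their difference is harmonic and decaying, hence zero by Liouville. Applying the first two parts at each time yields $u(t)\in W^{m,1}$.
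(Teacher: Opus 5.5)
Your proposal is correct and follows the paper's proof step for step: the second-order Taylor expansion of $D^mK(x-y)$ with the two moment conditions removing the constant and linear terms, then local smoothness together with the integrable $|x|^{-m-3}$ tail, then conservation of circulation and of the first moment via the antisymmetry of $K$, and finally integration by parts for $\omega_0=-\Delta\psi$ (your Liouville argument for $u(t)=K*\omega(t)$ makes explicit a point the paper leaves implicit). One small slip, in the aside you do not use: the correct two-dimensional identity is $\omega\,(u_2,-u_1)=\nabla(|u|^2/2)-\diver(u\otimes u)$, not $u\,\omega=\cdots$, because the right-hand side is always orthogonal to $u$; this does not affect your argument.
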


\begin{proof}
For $|x|\ge2R$, Taylor's formula gives
\begin{align*}
 D^mK(x-y)
 &=D^mK(x)-D^{m+1}K(x)[y]
   +\int_0^1(1-\theta)D^{m+2}K(x-\theta y)[y,y] \,\dd\theta,
\end{align*}
The constant and linear terms disappear after integration against $\omega$
by \eqref{eq:moment-zero}.  Moreover,
\[
 |D^{m+2}K(z)|\le C_m|z|^{-m-3},
 \qquad |x-\theta y|\ge\frac{|x|}{2}.
\]
This proves \eqref{eq:far-decay}.  On a fixed bounded
set, standard elliptic regularity and smoothness of $\omega$ imply local
integrability of every derivative of $u$.  The far-field estimate then gives
$u\in W^{m,1}$.

The circulation is conserved by the vorticity transport equation.  For the
first moment, integration by parts gives
\begin{equation*}
 \frac{\dd}{\dd t}\int x\omega(t,x)\,\dd x
 =\int u(t,x)\omega(t,x)\,\dd x.
\end{equation*}
Using $u=K*\omega$ and $K(-z)=-K(z)$,
\[
 \int u\omega
 =\iint K(x-y)\omega(x)\omega(y)\,\dd x\dd y=0
\]
by interchanging $x$ and $y$.  This proves preservation of the first
moment.  Finally, if $\omega_0=-\Delta\psi$ with
$\psi\in C_c^\infty$, integration by parts gives both identities in
\eqref{eq:moment-zero}.  The Euler vorticity remains smooth and compactly
supported on finite time intervals, so the first part of the lemma applies
at every time.
\end{proof}

\begin{remark}[Moments of individual packets]
For the macroscopic far-field estimate of an individual packet we use only
its zero circulation.  Its first moment need not be conserved relative to a
fixed packet center.  The stronger zero mass and zero first moment are used
for the \emph{total} Euler vorticities generated by compact stream functions,
which guarantees that the full velocities lie in $W^{3,1}$.
\end{remark}

For later comparisons of perturbed and background trajectories, we also
record the following elementary flow estimate.

\begin{lemma}[Comparison of two trajectories]
\label{lem:flow-comparison}
Let $b$ and $b+z$ be smooth vector fields on $[0,T]$, with
$\|\nabla b\|_{L_t^\infty L_x^\infty}\le L$.  Let $X$ and $Y$ solve
\[
 \dot X=b(t,X),\qquad \dot Y=b(t,Y)+z(t,Y).
\]
Then
\begin{equation*}
 |Y(t)-X(t)|
 \le e^{Lt}\left(|Y(0)-X(0)|+
 \int_0^t\|z(s)\|_{L^\infty}\,\dd s\right).
\end{equation*}
The estimate uses no derivative of $z$.
\end{lemma}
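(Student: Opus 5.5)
We prove the estimate with a standard Grönwall argument applied to the difference of the two trajectories. Set $D(t)=Y(t)-X(t)$. Since $X$ and $Y$ are $C^1$ solutions of their respective ODEs, subtracting the equations gives
\[
 \dot D(t)=\bigl(b(t,Y(t))-b(t,X(t))\bigr)+z(t,Y(t)).
\]
By the mean value inequality along the segment joining $X(t)$ and $Y(t)$, the first bracket is bounded in norm by $L|D(t)|$, using only $\|\nabla b\|_{L^\infty_tL^\infty_x}\le L$. The second term is bounded by $\|z(t)\|_{L^\infty}$. No derivative of $z$ enters.

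To avoid differentiating $|D|$ where it vanishes, we argue in integral form. Integrating from $0$ to $t$ gives
\[
 |D(t)|\le |D(0)|+\int_0^t\|z(s)\|_{L^\infty}\,\dd s+L\int_0^t|D(s)|\,\dd s.
\]
Write $\phi(t)=|D(0)|+\int_0^t\|z(s)\|_{L^\infty}\dd s$, which is nondecreasing in $t$. The integral form of Grönwall's inequality with this nondecreasing forcing term then yields
\[
 |D(t)|\le \phi(t)e^{Lt}.
\]
This is exactly the claimed bound.

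If one prefers to avoid quoting the nondecreasing-forcing version, there is a direct alternative. Define $G(t)=e^{-Lt}\int_0^t|D|$. Then
\[
 G'(t)\le e^{-Lt}\phi(t)\le e^{-Lt}\phi(T'),
\]
where $t\le T'$ for a fixed $T'$. Integrating this inequality and substituting back into the integral inequality gives $|D(T')|\le\phi(T')e^{LT'}$.

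The only point requiring care is this Grönwall step with a time-dependent forcing term. It is routine, and we expect no real obstacle. Existence of the trajectories on $[0,T]$ is implicit in the statement, since $b$ and $z$ are smooth; we assume it.
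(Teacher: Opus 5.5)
Your proof is correct and is the same argument as the paper's: subtract the two ODEs, bound $b(t,Y)-b(t,X)$ by $L|Y-X|$ using the gradient bound, and close with Gr\"onwall's inequality with the nondecreasing forcing $|Y(0)-X(0)|+\int_0^t\|z(s)\|_{L^\infty}\,\dd s$. Your integral-form treatment, including the auxiliary function $G$, just writes out the Gr\"onwall step that the paper's one-line proof leaves implicit.
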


\begin{proof}
Subtract the two ODEs, use the Lipschitz bound for $b$, and apply
Gronwall's inequality.
\end{proof}

\subsection{Biot--Savart estimates}
\label{sec:tools}

The following proposition collects the estimates used in the affine-profile,
packet, and localized-insertion arguments, with their distinct cancellation
hypotheses stated explicitly.

\begin{proposition}[Localized Biot--Savart estimates]
\label{prop:BS-toolbox}
The following estimates hold.
\begin{enumerate}[label=\textup{(\roman*)},leftmargin=2.6em]
 \item If $g\in L^1(\R^2)\cap L^\infty(\R^2)$, then
 \begin{equation*}
  \|K*g\|_{L^\infty}
  \le C\|g\|_{L^1}^{1/2}\|g\|_{L^\infty}^{1/2}.
 \end{equation*}

 \item Let $E,F\subset\R^2$ be compact with $\dist(E,F)\ge d>0$.  If
 $g\in L^1$ is supported in $F$, then for every integer $m\ge0$,
 \begin{equation*}
  \|D^mK*g\|_{L^\infty(E)}
  \le C_{m,d}\|g\|_{L^1}.
 \end{equation*}
 If in addition $\int g=0$, $\supp g\subset B(X,\varrho)$, and
 $\dist(E,B(X,\varrho))\ge d$, then cancellation improves this to
 \begin{equation*}
  \|D^mK*g\|_{L^\infty(E)}
  \le C_{m,d}\,\varrho\|g\|_{L^1}.
 \end{equation*}

 \item Let $s\ge0$ be an integer.  If
 $g\in H^s(\R^2)\cap L^1(\R^2)$ has zero integral and is supported in
 $B_{R_*}$, then
 \begin{equation}\label{eq:low-frequency}
  \|K*g\|_{H^{s+1}}
  \le C_{s,R_*}\bigl(\|g\|_{H^s}+\|g\|_{L^1}\bigr).
 \end{equation}
 If $g_n\to g$ in $H^s\cap L^1$ with common support and zero mass, then
 $K*g_n\to K*g$ in $H^{s+1}$.

\end{enumerate}
\end{proposition}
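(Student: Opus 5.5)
The three estimates are independent, and I would prove each directly from the explicit kernel $K(x)=\frac1{2\pi}x^\perp/|x|^2$ and its derivative bounds $|D^mK(z)|\le C_m|z|^{-1-m}$.

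For (i), I use the usual near/far splitting. For any $\rho>0$,
\[
 |K*g(x)|\le \frac1{2\pi}\int_{|x-y|<\rho}\frac{|g(y)|}{|x-y|}\dd y+\frac1{2\pi}\int_{|x-y|\ge\rho}\frac{|g(y)|}{|x-y|}\dd y
 \le \|g\|_{L^\infty}\rho+\frac{1}{2\pi\rho}\|g\|_{L^1}.
\]
Choosing $\rho=(\|g\|_{L^1}/\|g\|_{L^\infty})^{1/2}$ gives the claim. The case $g=0$ is trivial.

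For (ii), take $x\in E$ and $y\in F$. Then $|x-y|\ge d$, so $D^m(K*g)(x)=\int D^mK(x-y)g(y)\dd y$ is a convergent integral with no singularity. This gives the bound $C_m d^{-1-m}\|g\|_{L^1}$. Under the cancellation hypothesis, I subtract $D^mK(x-X)\int g=0$ and write
\[
 D^m(K*g)(x)=\int\bigl(D^mK(x-y)-D^mK(x-X)\bigr)g(y)\dd y .
\]
The segment from $x-X$ to $x-y$ stays at distance at least $d$ from the origin, because every point $X+\theta(y-X)$ lies in $B(X,\varrho)$. The mean value theorem then bounds the integrand by $C_{m+1}d^{-2-m}|y-X|\,|g(y)|\le C\varrho|g(y)|$. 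Integrating gives the improved estimate.

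For (iii), I work on the Fourier side, where $\widehat{K*g}(\xi)=\frac{i\xi^\perp}{|\xi|^2}\hat g(\xi)$ up to sign conventions, so $|\widehat{K*g}|=|\hat g|/|\xi|$. I split the frequencies into $|\xi|\ge1$ and $|\xi|<1$.

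High frequencies are immediate: $(1+|\xi|^2)^{(s+1)/2}/|\xi|\lesssim(1+|\xi|^2)^{s/2}$ there, so this part is bounded by $\|g\|_{H^s}$.

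Low frequencies are the only delicate point, since $|\xi|^{-2}$ is not integrable at the origin in two dimensions. Here the zero-mass hypothesis gives $\hat g(0)=0$. Because $g$ is supported in $B_{R_*}$, $\nabla\hat g(\xi)=\int(-ix)e^{-ix\cdot\xi}g(x)\dd x$ satisfies $|\nabla\hat g|\le R_*\|g\|_{L^1}$. Hence $|\hat g(\xi)|\le R_*|\xi|\,\|g\|_{L^1}$, the low-frequency multiplier is bounded, and
\[
 \int_{|\xi|<1}|\xi|^{-2}|\hat g|^2\dd\xi\le C R_*^2\|g\|_{L^1}^2 .
\]
This proves \eqref{eq:low-frequency}.

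The convergence statement follows by linearity. Apply the estimate to $g_n-g$: it is supported in the common support inside $B_{R_*}$, has zero mass, and tends to $0$ in $H^s\cap L^1$.

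I expect the only step requiring care to be this low-frequency estimate in (iii). The point is that the support bound controls $\nabla\hat g$ by $\|g\|_{L^1}$ alone, which is what removes the non-integrable $|\xi|^{-2}$ singularity at the origin.
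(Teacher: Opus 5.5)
Your proposal is correct and follows the paper's proof essentially step for step. For (i) you split near/far and optimize the radius; for (ii) you use uniform kernel bounds off the support, then subtract $D^mK(x-X)\int g$ and apply the mean value theorem; for (iii) you split frequencies and use $|\hat g(\xi)|\le R_*|\xi|\,\|g\|_{L^1}$ at low frequency, obtained from $|\nabla\hat g|\le R_*\|g\|_{L^1}$ rather than the paper's $|e^{-ix\cdot\xi}-1|\le|x||\xi|$, which is an equivalent route.
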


\begin{proof}
For \textup{(i)}, split the convolution at radius $\ell$:
\[
 |K*g(x)|\le C\ell\|g\|_{L^\infty}
 +C\ell^{-1}\|g\|_{L^1},
\]
and optimize in $\ell$.  The first estimate in \textup{(ii)} follows
because all derivatives of $K(x-y)$ are uniformly bounded on $E\times F$.
For the improved estimate, use zero mass to write
\[
 D^mK*g(x)=\int
 \bigl[D^mK(x-y)-D^mK(x-X)\bigr]g(y)\,\dd y
\]
and apply the mean value theorem.

For \textup{(iii)}, the Fourier identity
\[
 \widehat{K*g}(\xi)=i\frac{\xi^\perp}{|\xi|^2}\widehat g(\xi)
\]
controls high frequencies directly.  At $|\xi|<1$, zero mass and the
support bound give
\[
 |\widehat g(\xi)|
 =\left|\int(e^{-ix\cdot\xi}-1)g(x)\,\dd x\right|
 \le R_*|\xi|\|g\|_{L^1}.
\]
This proves \eqref{eq:low-frequency} and its convergence assertion.
\end{proof}

\subsection{A compactly supported hyperbolic background}
\label{sec:background}

Choose $\rho>0$ and a radial cutoff $\chi\in C_c^\infty(\R^2)$ such that
\begin{equation*}
 \chi=1\quad\text{on }B_{4\rho},
 \qquad
 \supp\chi\subset B_{8\rho}.
\end{equation*}
Define
\begin{equation*}
 \bar\psi_0(x)=\chi(x)x_1x_2,
 \qquad
 \bar u_0=\nabla^\perp\bar\psi_0,
 \qquad
 \bar\omega_0=-\Delta\bar\psi_0.
\end{equation*}
Then $\bar u_0\in C_c^\infty$ and, on $B_{4\rho}$,
\begin{equation*}
 \bar u_0(x)=(x_1,-x_2)=Mx,
 \qquad
 \bar\omega_0(x)=0,
 \qquad
 M=\begin{pmatrix}1&0\\0&-1\end{pmatrix}.
\end{equation*}
Let $\bar u(t)=\cS_t^\infty\bar u_0$, let $\bar\omega=\curl\bar u$, and
let $\bar\Phi(t,s,x)$ be the background flow.  For a label $q$, set
\begin{equation*}
 X_q(t)=\bar\Phi(t,0,q),
 \qquad
 B_q(t)=D_q\bar\Phi(t,0,q),
 \qquad
 M_q(t)=\nabla\bar u(t,X_q(t)).
\end{equation*}
Then
\begin{equation*}
 \dot B_q(t)=M_q(t)B_q(t),
 \qquad B_q(0)=I,
 \qquad \det B_q(t)=1.
\end{equation*}

Let
\begin{equation*}
 \cP_t=\bar\Phi(t,0,\overline{B_{2\rho}}),
 \qquad
 \cZ_t=\bar\Phi(t,0,\supp\nabla\bar\omega_0).
\end{equation*}
The notation $\cZ_t$ is convenient because
\begin{equation*}
 \supp\nabla\bar\omega(t)=\cZ_t.
\end{equation*}
Indeed,
\[
 \nabla\bar\omega(t,\bar\Phi(t,0,q))
 =B_q(t)^{-T}\nabla\bar\omega_0(q).
\]

\begin{proposition}[Material zero-vorticity tube and nonconformality]
\label{prop:background}
There exist $\tau_0>0$ and $d_0>0$ such that for every
$0\le t\le\tau_0$:
\begin{enumerate}[label=\textup{(\roman*)},leftmargin=2.4em]
 \item $\bar\omega(t)=0$ on $\bar\Phi(t,0,B_{4\rho})$;
 \item
 \begin{equation}\label{eq:material-separation}
  \dist(\cP_t,\cZ_t)\ge d_0;
 \end{equation}
 \item for every $0<t\le\tau_0$, $B_0(t)$ is nonconformal;
 \item the maps $(t,q)\mapsto X_q(t),B_q(t),M_q(t)$ and all derivatives
 needed below are uniformly bounded for
 $(t,q)\in[0,\tau_0]\times\overline{B_{2\rho}}$.
\end{enumerate}
After reducing $\tau_0$ if necessary, one may also require
\begin{equation*}
 C_\Phi^{-1}|q-q'|
 \le |X_q(t)-X_{q'}(t)|
 \le C_\Phi|q-q'|
\end{equation*}
for a fixed $C_\Phi\ge1$ and all
$q,q'\in\overline{B_{2\rho}}$.
\end{proposition}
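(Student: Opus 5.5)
The argument is a continuity-in-time argument for the smooth background flow, anchored at the explicit structure of $\bar u_0$ at $t=0$. Since $\bar u_0\in C^\infty_{c,\sigma}$, the classical solution $\bar u$ is smooth and compactly supported in space on every finite interval, with $\nabla\bar u$ bounded. The flow $\bar\Phi$ is therefore a smooth family of area-preserving diffeomorphisms, jointly smooth in $(t,x)$. Item (i) is immediate from transport, $\bar\omega(t,\bar\Phi(t,0,x))=\bar\omega_0(x)$, together with $\bar\omega_0=0$ on $B_{4\rho}$; it holds for all $t\ge0$. Item (iv) follows from joint smoothness of $\bar\Phi$ and $D\bar\Phi$ on the compact set $[0,1]\times\overline{B_{2\rho}}$, differentiating the flow ODE in $q$ and $t$ as often as needed. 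For the bi-Lipschitz bound, I would use $|D\bar\Phi(t,0,\cdot)|$, $|D\bar\Phi(t,0,\cdot)^{-1}|\le e^{Lt}$ with $L=\|\nabla\bar u\|_{L^\infty_{t,x}}$, via Gronwall as in Lemma~\ref{lem:flow-comparison} applied to both the forward and the backward flows. This gives the two-sided bound with $C_\Phi=e^{L\tau_0}$ (indeed globally, with no reduction needed).

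For (ii), at $t=0$ we have $\supp\nabla\bar\omega_0\cap B_{4\rho}=\emptyset$, since $\bar\omega_0\equiv0$ on the open ball. Hence $\dist(\overline{B_{2\rho}},\supp\nabla\bar\omega_0)\ge2\rho$. The map $\bar\Phi(t,0,\cdot)$ is $e^{Lt}$-Lipschitz in the inverse direction, so for any $p\in\overline{B_{2\rho}}$ and $z\in\supp\nabla\bar\omega_0$,
\[
|\bar\Phi(t,0,p)-\bar\Phi(t,0,z)|\ge e^{-Lt}|p-z|\ge 2\rho e^{-Lt}.
\]
This yields \eqref{eq:material-separation} with $d_0=2\rho e^{-L\tau_0}$ for any $\tau_0$. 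Alternatively one can note that $\cP_t$ stays inside $\bar\Phi(t,0,B_{4\rho})$ at positive distance from its boundary. Either way (ii) is not the obstacle.

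The real content is (iii). Since $q=0$ is a stagnation point only up to the time-dependent correction, I would expand $B_0(t)=I+tM_0(0)+O(t^2)$ using $\dot B_0=M_0B_0$, $B_0(0)=I$, and smoothness from (iv). Here $M_0(0)=\nabla\bar u_0(0)=M=\operatorname{diag}(1,-1)$, because $\bar u_0(x)=Mx$ near $0$. Conformality of a matrix $B$ with $\det B=1$ means $B\in SO(2)$, i.e.\ $B^TB=I$. I would therefore show
\[
B_0(t)^TB_0(t)=I+t(M+M^T)+O(t^2)=I+2tM+O(t^2),
\]
which differs from $I$ for all small $t>0$ since $M\neq0$ is symmetric. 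Concretely, the $(1,1)$ entry of $B_0^TB_0$ equals $1+2t+O(t^2)>1$ for $0<t\le\tau_0$ with $\tau_0$ small. The only care needed is that the $O(t^2)$ constant be uniform, which follows from the bound on $\dot M_0$ provided by (iv). Here the remainder involves $\partial_t\nabla\bar u$ and $\nabla^2\bar u\cdot\dot X_0$, both bounded by smoothness of the classical solution. Choosing $\tau_0$ below the resulting threshold, and also $\le1$, completes the proof. I expect this quantitative choice of $\tau_0$ in (iii) to be the only step needing more than a line.
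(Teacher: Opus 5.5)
Your proof is correct and follows the same route as the paper: transport of $\bar\omega$ for (i), propagation of the positive initial separation between $\overline{B_{2\rho}}$ and $\supp\nabla\bar\omega_0$ for (ii), the expansion $B_0(t)^TB_0(t)=I+2tM+O(t^2)$ for (iii), and smoothness of the classical flow for (iv); your Gronwall bounds $d_0=2\rho e^{-L\tau_0}$ and $C_\Phi=e^{L\tau_0}$ simply make explicit the paper's compactness and inverse-function-theorem step. One harmless slip: $\bar u(t)$ is in general not compactly supported for $t>0$ (only $\bar\omega(t)$ is), but $\nabla\bar u$ is still bounded on finite time intervals, and that is all your argument uses.
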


\begin{proof}
The vorticity is transported by the background flow:
\[
 \bar\omega(t,\bar\Phi(t,0,q))=\bar\omega_0(q).
\]
Since $\bar\omega_0=0$ on $B_{4\rho}$, this proves (i).  At time zero the
compact sets $\overline{B_{2\rho}}$ and
$\supp\nabla\bar\omega_0$ have positive distance.  Their images under the
smooth diffeomorphism $\bar\Phi(t,0,\cdot)$ remain disjoint.  Compactness in
$t$ and continuity yield the uniform lower bound \eqref{eq:material-separation}.

At $q=0$, $B_0(0)=I$ and $\dot B_0(0)=M$.  Therefore
\begin{equation*}
 B_0(t)^TB_0(t)=I+2tM+O(t^2).
\end{equation*}
The two diagonal entries differ by $4t+O(t^2)$, so the matrix is not a
scalar multiple of the identity for all sufficiently small $t>0$.  This is
precisely nonconformality in dimension two.  The remaining assertions follow
from smooth classical well-posedness, compactness, and the inverse function
theorem for the flow.
\end{proof}

For later support bootstraps, fix
\begin{equation*}
 0<\kappa<\frac{d_0}{32C_\Phi^2}.
\end{equation*}
Then, after reducing $\kappa$ if needed,
\begin{equation*}
 \dist\bigl(\Nbh_{4\kappa}(\cP_t),
             \Nbh_{4\kappa}(\cZ_t)\bigr)\ge\frac{d_0}{2}
 \qquad(0\le t\le\tau_0).
\end{equation*}
The packet and external-response tubes will remain in these fixed
neighborhoods of the background material regions.

\section{The endpoint affine amplifier}
\label{sec:affine}

\subsection{The nonmultiplier theorem}

We recall the endpoint rigidity theorem used in the construction.

\begin{theorem}[Bonami--Poornima]
\label{thm:BP}
Let $d\ge2$, let $k\ge1$ be an integer, and let
$m\in C(\R^d\setminus\{0\})$ be homogeneous of degree zero.  If $m(D)$ is
bounded on $\dot W^{k,1}(\R^d)$, then $m$ is constant.
\end{theorem}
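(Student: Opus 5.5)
Since the paper only quotes Theorem~\ref{thm:BP} from \cite{BonamiPoornima1987}, this is a sketch of how I would reprove it. The plan is to reduce it to an Ornstein-type $L^1$ non-inequality and then use an algebraic argument on homogeneous symbols. First I translate boundedness into inequalities. Suppose $\|m(D)f\|_{\dot W^{k,1}}\le C\|f\|_{\dot W^{k,1}}$ for all Schwartz $f$ with $\widehat f$ vanishing near $0$ (a dense class). Then for every multi-index $|\alpha|=k$,
\[
 \|P_\alpha(D)f\|_{L^1}\le C\sum_{|\beta|=k}\|\partial^\beta f\|_{L^1},
 \qquad P_\alpha(\xi):=(i\xi)^\alpha m(\xi).
\]
Here $P_\alpha$ is continuous away from the origin and homogeneous of degree $k$, and the right side involves only the monomials of degree $k$.

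The core step is the following rigidity statement, in the spirit of Ornstein \cite{Ornstein1962}. If a continuous symbol $P$, homogeneous of degree $k$, satisfies $\|P(D)f\|_{L^1}\le C\sum_j\|Q_j(D)f\|_{L^1}$ with $Q_j$ homogeneous polynomials of degree $k$, then $P$ lies in the linear span of the $Q_j$.

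I would prove this in two moves. First, dilation and translation invariance let the inequality pass to limits in which $Q_j(D)f$ stay finite measures. Second, I would build test functions whose $k$-th derivatives are uniformly $L^1$-bounded while $P(D)f$ is forced to be large. Such functions are concentrated along a hyperplane, or are products of lacunary one-dimensional pieces, so their Fourier transforms are spread over many directions. Concretely, I would try to adapt Ornstein's construction. One takes a nonnegative measure $\mu$ built as a Riesz-type product of bumps at scales $2^{-N_1}\gg2^{-N_2}\gg\cdots$. One sets $f$ equal to $\mu$ convolved with a fundamental solution of a suitable combination $\sum c_jQ_j(D)$, so that $\sum_j\|Q_j(D)f\|_{L^1}$ stays bounded. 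If $P$ is not in the span, the corresponding pieces of $P(D)f$ add almost orthogonally, and the $L^1$ norm grows at least like $\sqrt{\#\text{scales}}$. This multi-scale construction is the step I expect to be the main obstacle. The delicate points are keeping $P(D)f$ from cancelling across scales and treating a merely continuous (non-smooth) $P$. For the latter I would first mollify $m$ on the sphere; this changes the operator by a convex average of rotations, which preserves the inequality up to constants.

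Granting the rigidity statement, the algebra is short. For each $|\alpha|=k$ it gives $\xi^\alpha m(\xi)=p_\alpha(\xi)$ for some homogeneous polynomial $p_\alpha$ of degree $k$. Taking $\alpha=ke_1$ and $\alpha=ke_2$ yields $\xi_2^k p_{ke_1}=\xi_1^k p_{ke_2}$ as polynomials. Since $\xi_1^k$ and $\xi_2^k$ are coprime, $\xi_1^k$ divides $p_{ke_1}$, and the degree count forces $p_{ke_1}=c\,\xi_1^k$. Hence $m\equiv c$ on $\{\xi_1\ne0\}$, and by continuity on all of $\R^d\setminus\{0\}$. This uses $d\ge2$; for $d=1$ the sign function is a genuine counterexample, which is consistent with the hypothesis of the theorem.
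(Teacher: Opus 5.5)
The paper gives no proof of this theorem. It is quoted from Bonami--Poornima and used as a black box, so your sketch has to stand on its own, and it has a genuine gap exactly where all the difficulty lies.

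Your peripheral steps are sound. Boundedness of $m(D)$ on $\dot W^{k,1}$ is equivalent to the inequalities $\|P_\alpha(D)f\|_{L^1}\le C\sum_{|\beta|=k}\|\partial^\beta f\|_{L^1}$ for all $|\alpha|=k$. Averaging $m$ over rotations is legitimate, because the $\dot W^{k,1}$ seminorm is rotation invariant up to constants and the averages converge to $m$ uniformly on the sphere. The coprimality argument correctly shows that if every $\xi^\alpha m$ is a polynomial, then $m$ is constant.

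But the ``rigidity statement'' is not an intermediate lemma. Take $\{Q_j\}$ to be the monomials of degree $k$. Then its hypothesis, for all $\alpha$, is literally the boundedness of $m(D)$ on $\dot W^{k,1}$, and its conclusion is, by your own algebra, that $m$ is constant. For a single $P$ and arbitrary $Q_j$ it is even stronger than the theorem. Ornstein's theorem does not supply it, since it concerns polynomial $P$. Extending the non-inequality to a merely continuous, non-polynomial homogeneous symbol is precisely the content of the theorem, and you have deferred it rather than proved it.

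The construction you propose for that step would also fail as described.
\begin{itemize}
\item If $f=E*\mu$ with $E$ a fundamental solution of $L=\sum_jc_jQ_j(D)$, you control only $Lf=\mu$. Each individual $Q_j(D)f=(Q_j(D)E)*\mu$ is a singular integral of a measure and is in general not in $L^1$; this unboundedness is the Ornstein phenomenon itself. So $\sum_j\|Q_j(D)f\|_{L^1}$ is not kept bounded by your construction.
\item An Ornstein-type example must keep every $Q_j(D)f$ bounded in $L^1$ simultaneously while $P(D)f$ grows. The hypothesis $P\notin\operatorname{span}\{Q_j\}$ must enter quantitatively at each scale, and nothing in the sketch says how a non-polynomial $P$ yields a non-cancelling piece there.
\item Almost orthogonality of the scale pieces gives $\sqrt N$ growth of the $L^2$ norm, not of the $L^1$ norm. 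An $L^1$ lower bound needs an additional device, for example pairing with a uniformly bounded Riesz product or a Paley/Khintchine-type inequality for lacunary sums, and this is missing.
\end{itemize}
As it stands, the proposal establishes only the elementary algebraic reduction, not the theorem.
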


For $A\in SL(2,\R)$, set
\begin{equation*}
 m_A(\xi)=\frac{|\xi|^2}{|A^{-T}\xi|^2},
 \qquad \xi\ne0.
\end{equation*}
The symbol $m_A$ is positive, smooth away from the origin, and homogeneous
of degree zero.

If $m_A$ is constant, then
$A^{-1}A^{-T}=cI$; the determinant condition forces $c=1$, so $A$ is
orthogonal.  Thus $m_A$ is nonconstant exactly when $A$ is nonconformal.
The only additional point needed below is that the resulting multiplier
unboundedness can be witnessed by compactly supported smooth functions.
The cutoff argument establishing this fact is recorded as
Lemma~\ref{lem:compact-reduction} in Appendix~\ref{app:compact-reduction}.

\begin{proposition}[Bad compactly supported affine profiles]
\label{prop:bad-profile}
Let $A\in SL(2,\R)$ be nonconformal.  For every $L>0$, there exists a
real-valued $f\in C_c^\infty(\R^2)$ such that
\begin{equation}\label{eq:f-normal}
 \|f\|_{\dot W^{4,1}}\le1,
 \qquad
 \|m_A(D)f\|_{\dot W^{4,1}}>L.
\end{equation}
For every such compactly supported $f$, the quantity on the right is finite.
\end{proposition}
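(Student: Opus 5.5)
The plan is to combine Theorem~\ref{thm:BP} with a density/cutoff reduction (the content of Lemma~\ref{lem:compact-reduction}) and a normalization by homogeneity.

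\emph{Step 1: unboundedness.} Since $A$ is nonconformal, $m_A$ is a nonconstant, continuous, degree-zero homogeneous function on $\R^2\setminus\{0\}$. Theorem~\ref{thm:BP} with $d=2$, $k=4$ then shows that $m_A(D)$ is not bounded on $\dot W^{4,1}(\R^2)$. I will also need the fact that unboundedness can be tested on compactly supported smooth functions. Suppose instead that
\[
\|m_A(D)f\|_{\dot W^{4,1}}\le L\|f\|_{\dot W^{4,1}}\quad\text{for all real } f\in C_c^\infty.
\]
Real parts suffice: $m_A$ is real and even, so $m_A(D)$ maps real functions to real functions. We would then extend this bound to the whole homogeneous space by density. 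The extension goes as follows. Given $g$ with $D^4g\in L^1$, take a frequency-localized mollification, then a cutoff $\chi(\cdot/R)$. Subtract a degree-$\le3$ Taylor polynomial on an annulus, via a Poincaré-type inequality, so that the lower-order terms produced by Leibniz,
\[
R^{-j}\|D^{4-j}(g-P)\|_{L^1(B_{2R}\setminus B_R)},
\]
tend to zero. Finally pass to the limit using lower semicontinuity of the $L^1$ norm under distributional convergence of $D^4 m_A(D)f_n$. This yields boundedness on $\dot W^{4,1}$, contradicting Step~1. I expect this density step, the cutoff argument of Lemma~\ref{lem:compact-reduction}, to be the main obstacle. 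The delicate points are:
\begin{itemize}[leftmargin=2em]
\item the homogeneous space is defined modulo polynomials;
\item $m_A(D)$ must be interpreted on it consistently, for instance through $D^4 m_A(D) f= m_A(D)D^4f$ tested against Schwartz functions with vanishing moments;
\item the cutoff errors must vanish in $\dot W^{4,1}$, which needs the annular Poincaré/Taylor correction rather than naive truncation.
\end{itemize}

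\emph{Step 2: normalization.} Granting unboundedness on $C_c^\infty$, pick a real $f_0\in C_c^\infty$ with $\|m_A(D)f_0\|_{\dot W^{4,1}}>L\|f_0\|_{\dot W^{4,1}}$. The quantity $\|f_0\|_{\dot W^{4,1}}$ is nonzero, since a compactly supported function with $D^4f_0=0$ is a polynomial of compact support, hence zero. Set $f=f_0/\|f_0\|_{\dot W^{4,1}}$; both conditions in \eqref{eq:f-normal} then hold.

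\emph{Step 3: finiteness.} For any $f\in C_c^\infty$, each derivative $\partial^\alpha m_A(D)f$ with $|\alpha|=4$ equals $m_A(D)\partial^\alpha f$. This is a degree-zero multiplier applied to a Schwartz-class function with vanishing moments up to order $3$, so its Fourier transform vanishes to order $4$ at the origin. Away from the origin, $m_A\,\widehat{\partial^\alpha f}$ is smooth and rapidly decaying, so the function is smooth and bounded. Near the origin, homogeneity gives $|\partial_\xi^\beta(m_A\widehat{\partial^\alpha f})|\lesssim|\xi|^{4-|\beta|}$. Standard kernel estimates, integrating by parts three times in $\xi$, then give spatial decay $O(|x|^{-6})$. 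This is integrable in $\R^2$, so $\|m_A(D)f\|_{\dot W^{4,1}}<\infty$.
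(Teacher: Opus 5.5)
Your proposal is correct. Steps 1 and 2 follow the paper's proof: Theorem~\ref{thm:BP}, the cutoff reduction of Lemma~\ref{lem:compact-reduction}, then a real part and normalization.

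Your density step is more careful than the paper's. The paper truncates Schwartz functions naively, where rapid decay kills the Leibniz terms, and then only asserts the extension to the homogeneous completion. Your Taylor-corrected annular cutoff actually handles general $g$ with $D^4g\in L^1$ modulo polynomials. That extra care is needed only if Theorem~\ref{thm:BP} is read on the full completion rather than on Schwartz test functions.

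Where you genuinely differ is the finiteness assertion. The paper argues in physical space. The vorticity $(-\Delta f)(A^{-1}\cdot)$ has zero mass and zero first moment, so Lemma~\ref{lem:moments} puts its Biot--Savart velocity in $W^{3,1}$, with third derivatives decaying like $|x|^{-6}$. Lemma~\ref{lem:affine-identity} then transfers this to the fourth derivatives of $m_A(D)f$. Your Fourier-side argument is self-contained and works for any degree-zero symbol smooth off the origin. The paper's route instead ties finiteness directly to the physical velocity $w$ that is used next in Proposition~\ref{prop:uniform-passive}.

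One small correction to your Step 3. Three integrations by parts give only $|x|^{3}\,|\partial^\alpha m_A(D)f(x)|\lesssim1$, not $O(|x|^{-6})$. This holds because $\partial_\xi^\beta\bigl(m_A(\xi)(i\xi)^\alpha\widehat f(\xi)\bigr)$ with $|\beta|=3$ is continuous, $O(|\xi|)$ at the origin, and rapidly decaying, hence in $L^1$. The sharp $|x|^{-6}$ rate would need a splitting at $|\xi|\sim|x|^{-1}$ and at least seven integrations by parts on the outer piece. Since $(1+|x|)^{-3}$ is already integrable on $\R^2$, your conclusion stands as written.
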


\begin{proof}
If the ratio in \eqref{eq:f-normal} were uniformly bounded on
$C_c^\infty$, Lemma~\ref{lem:compact-reduction} would make $m_A$ a
multiplier on $\dot W^{4,1}$.  Since nonconformality makes $m_A$
nonconstant, this contradicts Theorem~\ref{thm:BP}.  Taking a real or
imaginary part and normalizing gives the first assertion.

To see finiteness for a fixed $f$, note that the vorticity
$(-\Delta f)(A^{-1}\cdot)$ is compactly supported and has zero mass and zero
first moment.  Its Biot--Savart velocity is therefore in $W^{3,1}$ by
Lemma~\ref{lem:moments}.  Lemma~\ref{lem:affine-identity} below identifies
its top derivatives with those of $m_A(D)f$ up to a fixed invertible tensor
map.
\end{proof}

\subsection{Passive affine profiles}

Fix $0<\tau\le\tau_0$.  For $q\in B_{2\rho}$, consider
\begin{equation}\label{eq:passive-q}
 \partial_t\zeta_q^0+M_q(t)y\cdot\nabla\zeta_q^0=0,
 \qquad
 \zeta_q^0(0,y)=-\Delta f(y),
 \qquad
 w_q^0=K*\zeta_q^0.
\end{equation}
Because $\operatorname{tr}M_q=0$, the linear drift is divergence-free.  Its
flow is $y=B_q(t)y_0$, and hence
\begin{equation*}
 \zeta_q^0(t,y)=(-\Delta f)(B_q(t)^{-1}y).
\end{equation*}

\begin{lemma}[Affine multiplier identity]
\label{lem:affine-identity}
Let $\psi_q^0$ be the stream function of $w_q^0$ and define
$\phi_{q,t}(z)=\psi_q^0(t,B_q(t)z)$.  Then
\begin{equation}\label{eq:phi-multiplier}
 \phi_{q,t}=m_{B_q(t)}(D)f.
\end{equation}
There exist constants $0<c(B)\le C(B)<\infty$, locally uniform for
$B\in SL(2,\R)$, such that
\begin{equation}\label{eq:norm-equivalence}
 c(B_q(t))\|m_{B_q(t)}(D)f\|_{\dot W^{4,1}}
 \le \|w_q^0(t)\|_{\dot W^{3,1}}
 \le C(B_q(t))\|m_{B_q(t)}(D)f\|_{\dot W^{4,1}}.
\end{equation}
\end{lemma}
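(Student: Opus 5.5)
The plan is to compute in Fourier variables, using only the explicit formula for $\zeta_q^0$ and the stream-function convention. Write $B=B_q(t)$. The stream function of $w_q^0=K*\zeta_q^0$ is $\psi_q^0=(-\Delta)^{-1}\zeta_q^0$, so that $-\Delta\psi_q^0=\zeta_q^0=(-\Delta f)(B^{-1}\cdot)$. Since $\zeta_q^0$ is compactly supported with zero mass and zero first moment (by Lemma~\ref{lem:moments}, as $f\in C_c^\infty$), $\widehat{\zeta_q^0}$ vanishes to second order at the origin. Hence $\widehat{\psi_q^0}=|\xi|^{-2}\widehat{\zeta_q^0}$ is a bounded, integrable-near-zero function, and $\psi_q^0$ is well defined up to the usual normalization.

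\textbf{Step 1: the multiplier identity.} Because $\det B=1$, the pullback satisfies $\widehat{g(B^{-1}\cdot)}(\xi)=\hat g(B^T\xi)$. This gives
\[
 \widehat{\zeta_q^0}(\xi)=|B^T\xi|^2\hat f(B^T\xi),
 \qquad
 \widehat{\psi_q^0}(\xi)=\frac{|B^T\xi|^2}{|\xi|^2}\hat f(B^T\xi).
\]
Applying the same rule once more, $\widehat{\phi_{q,t}}(\eta)=\widehat{\psi_q^0}(B^{-T}\eta)=\frac{|\eta|^2}{|B^{-T}\eta|^2}\hat f(\eta)$. This is exactly $m_B(\eta)\hat f(\eta)$, which proves \eqref{eq:phi-multiplier}.

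\textbf{Step 2: the norm equivalence.} We have $w_q^0=\nabla^\perp\psi_q^0$, so $D^3w_q^0$ is a fixed linear image of $D^4\psi_q^0$. Conversely, every fourth derivative of $\psi_q^0$ appears among the components of $D^3w_q^0$, because $\partial^\alpha\partial_1\psi$ and $\partial^\alpha\partial_2\psi$ exhaust all fourth derivatives. Hence $\|w_q^0\|_{\dot W^{3,1}}\simeq\|\psi_q^0\|_{\dot W^{4,1}}$ with absolute constants.

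Next, $\psi_q^0(x)=\phi_{q,t}(B^{-1}x)$. The chain rule gives $D^4\psi_q^0(x)=D^4\phi_{q,t}(B^{-1}x)[B^{-1}\cdot,\dots]$, a tensor map that is invertible with norm bounded by $|B^{-1}|^4$ and inverse norm bounded by $|B|^4$. The change of variables $x=Bz$ has unit Jacobian, so the $L^1$ norms match up to these factors. Combining with the fixed equivalence between tensor norms and $\sum_{|\alpha|=4}\|\partial^\alpha\cdot\|_{L^1}$ yields \eqref{eq:norm-equivalence}. The constants are powers of $|B|$ and $|B^{-1}|$, hence locally uniform on $SL(2,\R)$.

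\textbf{Main obstacle.} The only delicate point is justifying that $\psi_q^0$ and $\phi_{q,t}$ are honest functions with integrable fourth derivatives, rather than formal Fourier objects. I would handle this by noting that $m_B(D)f$ has Fourier transform $m_B\hat f$, which is Schwartz away from zero and bounded near zero, so its derivatives of order $\ge1$ are given by integrable symbols. Finiteness of the $\dot W^{4,1}$ norms on both sides then follows from Lemma~\ref{lem:moments} applied to $\zeta_q^0$, which gives $w_q^0\in W^{3,1}$. Any additive ambiguity in the stream function (a constant) is annihilated by derivatives, so the equivalence concerns only derivatives and is unaffected.
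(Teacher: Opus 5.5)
Your proof is correct and follows essentially the same route as the paper. The multiplier identity comes from the unit-Jacobian change of variables $y=B_q(t)z$ combined with the Fourier transform: you apply the $SL(2,\R)$ pullback rule $\widehat{g(B^{-1}\cdot)}(\xi)=\hat g(B^T\xi)$ directly, whereas the paper first rewrites the Poisson equation as $-\diver_z(B^{-1}B^{-T}\nabla_z\phi_{q,t})=-\Delta_z f$. The norm equivalence comes from the chain rule plus measure preservation, and your remarks on the stream-function normalization and on finiteness via Lemma~\ref{lem:moments} fill in points the paper leaves implicit.
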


\begin{proof}
The equation $-\Delta_y\psi_q^0=\zeta_q^0$, the change of variables
$y=B_q(t)z$, and $\det B_q(t)=1$ yield
\begin{equation*}
 -\diver_z\bigl(B_q(t)^{-1}B_q(t)^{-T}\nabla_z\phi_{q,t}\bigr)
 =-\Delta_zf.
\end{equation*}
Taking Fourier transforms gives
\[
 |B_q(t)^{-T}\xi|^2\widehat{\phi}_{q,t}(\xi)
 =|\xi|^2\widehat f(\xi),
\]
which is \eqref{eq:phi-multiplier}.  Moreover,
\begin{equation*}
 w_q^0(t,B_q(t)z)
 =\mathsf J B_q(t)^{-T}\nabla_z\phi_{q,t}(z),
 \qquad
 \mathsf J=\begin{pmatrix}0&1\\-1&0\end{pmatrix}.
\end{equation*}
The third derivatives on the left and fourth derivatives of $\phi_{q,t}$
are related by a fixed invertible linear map on finite-dimensional tensors.
The change of variables preserves measure.  This proves
\eqref{eq:norm-equivalence} and its local uniformity in $B$.
\end{proof}

\begin{proposition}[Uniform passive amplification near a nonconformal label]
\label{prop:uniform-passive}
Let $\mathcal Q\subset\R^2$ be open, let $q_0\in\mathcal Q$, and let
$q\mapsto A_q$ be continuous from $\mathcal Q$ to $SL(2,\R)$ with
$A_{q_0}$ nonconformal.  For every $Q>0$, there exist a real-valued
$f\in C_c^\infty(\R^2)$, a radius $R>0$, and a neighborhood
$V\Subset\mathcal Q$ of $q_0$ such that, writing
$w_q=K*((-\Delta f)(A_q^{-1}\cdot))$,
\begin{equation}\label{eq:uniform-passive}
 \|f\|_{\dot W^{4,1}}\le1,
 \qquad
 \int_{B_R}|D^3w_q(y)|\,\dd y>Q
 \quad\text{for every }q\in V.
\end{equation}
\end{proposition}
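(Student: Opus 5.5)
The plan is to fix the profile at the single label $q_0$ using Proposition~\ref{prop:bad-profile}, truncate the $\dot W^{3,1}$ norm to a ball, and then spread the resulting bound to nearby labels by continuity in $q$.

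\emph{Step 1: a profile at $q_0$.} Since $A_{q_0}$ is nonconformal, Proposition~\ref{prop:bad-profile} applies with $L$ chosen so large that, by the lower bound in \eqref{eq:norm-equivalence}, we get $\|w_{q_0}\|_{\dot W^{3,1}}>2Q'$. Here $Q'$ is a fixed multiple of $Q$ that absorbs the tensor-norm constant relating $\int|D^3w|$ to $\dot W^{3,1}$. This yields a real-valued $f\in C_c^\infty$ with $\|f\|_{\dot W^{4,1}}\le1$. Note that \eqref{eq:norm-equivalence} applies to $w_q$ as defined here: Lemma~\ref{lem:affine-identity} only uses that the vorticity is $(-\Delta f)(B^{-1}\cdot)$ for some $B\in SL(2,\R)$, so we may take $B=A_q$.

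\emph{Step 2: truncation to a ball.} Let $\supp f\subset B_{R_0}$, and let $\mathcal K\subset\mathcal Q$ be a compact neighborhood of $q_0$. Then every vorticity $(-\Delta f)(A_q^{-1}\cdot)$ with $q\in\mathcal K$ is supported in a common ball $B_{R_1}$, and each has zero mass and zero first moment. By \eqref{eq:far-decay}, for $|x|\ge2R_1$,
\[
|D^3w_q(x)|\le C|x|^{-6}\int|y|^2|\Delta f(A_q^{-1}y)|\,\dd y\le C'|x|^{-6},
\]
with $C'$ uniform over $q\in\mathcal K$. Hence there is a single $R\ge 2R_1$ with $\int_{|x|>R}|D^3w_q|<Q'/4$ for all $q\in\mathcal K$. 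In particular, $\int_{B_R}|D^3w_{q_0}|>Q+\text{margin}$ after adjusting constants.

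\emph{Step 3: continuity in $q$.} It remains to show that $q\mapsto\int_{B_R}|D^3w_q|$ is continuous at $q_0$; then $V$ is a small neighborhood of $q_0$ contained in $\mathcal K$. I expect this to be the only real obstacle, since $D^3K*$ is not bounded on $L^1$ and a naive estimate fails. I would avoid it by using $L^2$ on the bounded set $B_R$. We have $\|D^3w_q-D^3w_{q_0}\|_{L^1(B_R)}\le CR\|D^3(w_q-w_{q_0})\|_{L^2}$. Since $D^3K*$ is bounded from $\dot H^2$ to $L^2$, the right side is controlled by $\|g_q-g_{q_0}\|_{H^2}$, where $g_q=(-\Delta f)(A_q^{-1}\cdot)$. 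The $H^2$ bound on $D^3w$ needs only the top-order Fourier multiplier, which is bounded; alternatively, use Proposition~\ref{prop:BS-toolbox}(iii) with $s=2$ for the low frequencies, since $g_q-g_{q_0}$ has zero mass and support in $B_{R_1}$. Because $\Delta f$ is smooth and compactly supported and $A_q\to A_{q_0}$, we have $g_q\to g_{q_0}$ in $H^2$ (and in $L^1$). The difference therefore tends to zero, which gives \eqref{eq:uniform-passive} on a small enough $V\Subset\mathcal Q$.
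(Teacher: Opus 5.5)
Your proposal is correct. Steps 1--2 follow the paper's proof. The one difference there is that your uniform tail bound over $\mathcal K$ in Step 2 is more than you need. Step 3 only uses continuity of $q\mapsto\int_{B_R}|D^3w_q|$ at $q_0$ for a fixed $R$, so, as in the paper, it suffices to choose $R$ from $D^3w_{q_0}\in L^1$ alone.

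The genuine difference is Step 3, where the paper stays entirely in $L^1$. It moves all three derivatives onto the source, $D^3w_q=K*D^3g_q$ with $g_q=(-\Delta f)(A_q^{-1}\cdot)$ supported in a common ball $B_{R_1}$. It then uses only $K\in L^1_{\mathrm{loc}}$:
\[
 \|K*h\|_{L^1(B_R)}\le\|K\|_{L^1(B_{R+R_1})}\|h\|_{L^1},
\]
together with $g_q\to g_{q_0}$ in $C_c^\infty$. This gives continuity into the full $W^{3,1}(B_R)$. It also shows that the unboundedness of $D^3K*$ on $L^1$, which you flag as the main obstacle, never actually arises, because no singular integral is applied to a rough function.

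Your route instead passes from $L^1$ to $L^2$ on the bounded ball by Cauchy--Schwarz. It then uses that the symbol of $D^3K*$ is homogeneous of degree two, hence bounded from $\dot H^2$ to $L^2$ by Plancherel. This is equally valid and equally short. It is slightly less local, since it relies on the global Fourier representation of $K$ rather than only local integrability of the kernel. It also controls only the top-order part, which is all the statement requires.
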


\begin{proof}
Proposition~\ref{prop:bad-profile} and
the calculation in Lemma~\ref{lem:affine-identity}, applied with
$B_q(t)$ replaced by $A_{q_0}$, give $f$ with
$\|w_{q_0}\|_{\dot W^{3,1}}>4Q$.  Since
$D^3w_{q_0}\in L^1(\R^2)$, choose $R$ such that
\[
 \int_{B_R}|D^3w_{q_0}(y)|\,\dd y>2Q.
\]
For $A$ in a compact subset of $SL(2,\R)$, the sources
$(-\Delta f)(A^{-1}\cdot)$ have a common compact support and depend
continuously on $A$ in $C_c^\infty$.  Moving three derivatives onto the
source in the Biot--Savart convolution and using $K\in L^1_{\rm loc}$
shows that
$A\mapsto K*((-\Delta f)(A^{-1}\cdot))$ is continuous into
$W^{3,1}(B_R)$.  Continuity of $q\mapsto A_q$ now gives the required
neighborhood $V$.
\end{proof}

Fix $0<\tau\le\tau_0$ and $Q>0$.  By
Proposition~\ref{prop:background}, $B_0(\tau)$ is nonconformal.  Apply
Proposition~\ref{prop:uniform-passive} to $A_q=B_q(\tau)$ and shrink its
label neighborhood to a ball $B_h$, $0<h<\rho$.  Then the profile
\eqref{eq:passive-q} satisfies \eqref{eq:uniform-passive} with
$w_q=w_q^0(\tau)$ for every $q\in B_h$.

\begin{remark}[Why the observation ball need not contain the vorticity]
The $L^1$ mass of $D^3w_q^0$ producing the large output need not be
concentrated on the support of $\zeta_q^0$.  Later rescaled estimates are
therefore carried out on a ball strictly larger than both the vorticity
support and the observation ball $B_R$.
\end{remark}

\subsection{Nonlinear affine profiles}
\label{sec:profiles}

For $q\in B_h$ and $a\ge0$, consider the nonlinear profile system
\begin{equation}\label{eq:nonlinear-profile}
 \left\{
 \begin{aligned}
 &\partial_t\zeta_q^a+
 \bigl(M_q(t)y+a w_q^a\bigr)\cdot\nabla\zeta_q^a=0,\\
 &w_q^a=K*\zeta_q^a,\\
 &\zeta_q^a(0)=-\Delta f.
 \end{aligned}
 \right.
\end{equation}
The coefficient $a$ is the physical vorticity amplitude of a microscopic
packet after its amplitude has been factored out.  Since
$\operatorname{tr}M_q=0$ and $w_q^a$ is divergence-free, all $L^p$ norms of
$\zeta_q^a$ are conserved.

\begin{lemma}[Moment identities for affine profiles]
\label{lem:profile-moments}
For every $q$, $a$, and $t$,
\begin{equation*}
 \int\zeta_q^a(t,y)\,\dd y=0,
 \qquad
 \int y\,\zeta_q^a(t,y)\,\dd y=0.
\end{equation*}
Consequently, $w_q^a(t)\in W^{m,1}(\R^2)$ for every integer $m\ge0$.
\end{lemma}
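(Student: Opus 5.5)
The plan is to argue directly from the transport structure of \eqref{eq:nonlinear-profile}, exactly as in Lemma~\ref{lem:moments} but with the extra linear drift $M_q(t)y$. At $t=0$ both identities hold because $\zeta_q^a(0)=-\Delta f$ with $f\in C_c^\infty$: integrating by parts once gives zero mass, and integrating by parts twice gives $\int y_i\Delta f\,\dd y=\int \Delta(y_i) f\,\dd y=0$. The profile solution remains smooth and compactly supported on finite time intervals. Indeed, the drift $M_q(t)y+aw_q^a$ is divergence-free and has linear growth, and the standard two-dimensional global theory applies in the moving frame. So every integration by parts below is justified without boundary terms.

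First, the mass. Writing the equation in conservative form,
\[
 \partial_t\zeta_q^a+\diver\bigl((M_q(t)y+aw_q^a)\zeta_q^a\bigr)=0,
\]
which is valid since $\operatorname{tr}M_q=0$ and $\diver w_q^a=0$, and integrating over $\R^2$ gives $\frac{\dd}{\dd t}\int\zeta_q^a=0$.

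Second, the first moment. Multiplying by $y$ and integrating by parts gives
\[
 \frac{\dd}{\dd t}\int y\,\zeta_q^a\,\dd y
 =M_q(t)\int y\,\zeta_q^a\,\dd y+a\int w_q^a\zeta_q^a\,\dd y.
\]
The last integral vanishes by the antisymmetry argument of Lemma~\ref{lem:moments}: $\iint K(y-y')\zeta(y)\zeta(y')\,\dd y\,\dd y'=0$ because $K(-z)=-K(z)$. Hence the vector $P(t)=\int y\,\zeta_q^a\,\dd y$ solves the linear ODE $\dot P=M_q(t)P$ with $P(0)=0$, and uniqueness gives $P\equiv0$. This linear-ODE step is the only new point compared with Lemma~\ref{lem:moments}, and it is where I expect care is needed. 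The first moment is not literally conserved here, but it is propagated linearly from zero, so it stays zero.

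Finally, with both cancellations holding at each $t$ and $\zeta_q^a(t)\in C_c^\infty$, the first part of Lemma~\ref{lem:moments} applies directly and gives $w_q^a(t)=K*\zeta_q^a(t)\in W^{m,1}(\R^2)$ for all $m\ge0$. The main technical obstacle is justifying smoothness and compact support of $\zeta_q^a$ for all $t$. The flow of the drift is the composition of the linear flow $B_q(t)$ with a bounded perturbation, since $w_q^a$ is bounded by Proposition~\ref{prop:BS-toolbox}(i) using conserved $L^1$ and $L^\infty$ norms. Hence supports stay in a compact set on bounded time intervals, and smoothness follows from the usual Yudovich/Kato theory applied to this transport equation.
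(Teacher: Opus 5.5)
Your proposal is correct and follows essentially the same route as the paper. Mass is conserved by the divergence-free transport. The first moment satisfies $\dot P=M_q(t)P$ once the self-interaction term $a\int w_q^a\zeta_q^a$ is removed by oddness of $K$, so $P(0)=0$ and uniqueness give $P\equiv0$, and Lemma~\ref{lem:moments} then yields the velocity statement. Your added justification of compact support, via the bound on $w_q^a$ and Gronwall along characteristics, is what the paper records separately in Lemma~\ref{lem:profile-smooth-bounds}.
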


\begin{proof}
The mass is conserved by the divergence-free transport.  Let
$I(t)=\int y\zeta_q^a(t,y)\,\dd y$.  Integration by parts gives
\[
 I'(t)=M_q(t)I(t)+a\int w_q^a(t,y)\zeta_q^a(t,y)\,\dd y.
\]
The last integral vanishes by oddness of the Biot--Savart kernel, exactly as
in the proof of Lemma~\ref{lem:moments}.  Since $I(0)=0$, uniqueness for the
linear ODE yields $I(t)=0$.  Compact support on bounded time intervals and
Lemma~\ref{lem:moments} give the velocity statement.
\end{proof}

\begin{lemma}[Uniform smooth profile bounds]
\label{lem:profile-smooth-bounds}
Fix $0<\tau\le\tau_0$, $h>0$, $A_0>0$, and an integer $s\ge4$.
There are $R_{\rm prof}$ and $C_{\rm prof}$, depending on
$f,\tau,h,A_0,s$ and the background, such that for every
$q\in\overline{B_h}$ and $0\le a\le A_0$,
\begin{align}
 &\supp\zeta_q^a(t)\subset B_{R_{\rm prof}}
 \qquad(0\le t\le\tau),\label{eq:profile-support}\\
 &\sup_{0\le t\le\tau}
 \|\zeta_q^a(t)\|_{H^s}\le C_{\rm prof}.
 \label{eq:profile-Hs}
\end{align}
\end{lemma}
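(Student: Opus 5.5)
We prove the two bounds in turn, support first and then energy, with all constants taken uniform over the compact parameter set $(q,a)\in\overline{B_h}\times[0,A_0]$ and $t\in[0,\tau]$. The background quantities $M_q(t)$ are uniformly bounded there by Proposition~\ref{prop:background}(iv). Throughout we use that \eqref{eq:nonlinear-profile} is a 2D Euler-type transport equation with a divergence-free drift $b=M_q(t)y+aw_q^a$. Local existence and smoothness for smooth compactly supported data follow from the standard vorticity theory, since the linear drift is globally Lipschitz. Global continuation to $[0,\tau]$ will follow from the a priori bounds below.

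\emph{Support.} Since all $L^p$ norms of $\zeta_q^a$ are conserved, we have $\|\zeta_q^a(t)\|_{L^1\cap L^\infty}=\|\Delta f\|_{L^1\cap L^\infty}$. Proposition~\ref{prop:BS-toolbox}(i) then gives
\[
 \|w_q^a(t)\|_{L^\infty}\le C_f .
\]
Now let $\supp f\subset B_{R_0}$, and let $Y(t)$ be any characteristic starting in $B_{R_0}$. It satisfies
\[
 |\dot Y|\le \|M_q\|_{L^\infty}|Y|+A_0C_f .
\]
Gronwall's inequality gives $|Y(t)|\le e^{C\tau}(R_0+A_0C_f\tau)=:R_{\rm prof}$, which proves \eqref{eq:profile-support}. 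This bound is uniform in $q$ and $a$.

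\emph{$H^s$ bound.} We differentiate the equation up to order $s$ and run the standard commutator energy estimate. Two structural facts are used. First, the linear drift commutes with $\partial^\alpha$ up to terms $M_q\,\partial^\alpha\zeta$, since $D^2(My)=0$. Second, its divergence is zero. This yields
\[
 \frac{\dd}{\dd t}\|\zeta\|_{H^s}\le C\bigl(\|M_q\|+a\|\nabla w\|_{L^\infty}\bigr)\|\zeta\|_{H^s}+Ca\|\nabla\zeta\|_{L^\infty}\|w\|_{\dot H^s},
\]
via the Kato--Ponce commutator estimate. Two pieces of control feed into this inequality. The first is $\|w\|_{\dot H^s}\lesssim\|\zeta\|_{H^{s-1}}$ at high frequency, together with Proposition~\ref{prop:BS-toolbox}(iii) for low frequencies, using zero mass (Lemma~\ref{lem:profile-moments}) and the support radius $R_{\rm prof}$. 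The second is $\|\nabla\zeta\|_{L^\infty}\lesssim\|\zeta\|_{H^s}$, which holds because $s\ge4$.

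The main obstacle is the factor $\|\nabla w\|_{L^\infty}$. A naive estimate produces a quadratic inequality $y'\le C(1+y)y$, and this only gives a short time of existence, not the full interval $[0,\tau]$ uniformly in $a\le A_0$. I would handle it exactly as in the Beale--Kato--Majda/Yudovich theory. Using the conserved quantities $\|\zeta\|_{L^1\cap L^\infty}$, the logarithmic estimate
\[
 \|\nabla w\|_{L^\infty}\le C\bigl(1+\log(e+\|\zeta\|_{H^s})\bigr)
\]
holds, with constants uniform because $\|\zeta\|_{L^1\cap L^\infty}$ is fixed. Feeding this into the energy inequality gives
\[
 y'\le C(1+\log(e+y))\,y ,
\]
whose solutions grow at most double-exponentially on $[0,\tau]$. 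The resulting constant depends only on $\|\Delta f\|_{H^s}$, $\tau$, $A_0$, $\sup\|M_q\|$ and $R_{\rm prof}$. This proves \eqref{eq:profile-Hs}, and it also justifies the global continuation used at the start.
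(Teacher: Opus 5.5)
Your support argument is the paper's: conservation of $\|\zeta_q^a\|_{L^1\cap L^\infty}$, Proposition~\ref{prop:BS-toolbox}(i), then Gronwall along characteristics. Your overall $H^s$ strategy (commutator energy estimate, logarithmic Biot--Savart bound, Osgood-type inequality) is also the paper's. But the energy inequality you actually wrote does not close.

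You keep the Kato--Ponce term $Ca\|\nabla\zeta\|_{L^\infty}\|w\|_{\dot H^s}$ and bound its factors by $\|\nabla\zeta\|_{L^\infty}\lesssim\|\zeta\|_{H^s}=y$ and $\|w\|_{\dot H^s}\lesssim\|\zeta\|_{H^{s-1}}\le y$. That makes the term of size $ay^2$, and the logarithmic estimate for $\|\nabla w\|_{L^\infty}$ does not touch it. What your bounds actually give is
\[
 y'\le C\bigl(1+\log(e+y)\bigr)y+Cay^2 .
\]
This is exactly the quadratic inequality you said must be avoided, and it gives no bound on $[0,\tau]$ uniform in $a\le A_0$. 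So $\|\nabla w\|_{L^\infty}$ is not the only obstacle. Pairing the top-order derivatives of $w$ with $\|\nabla\zeta\|_{L^\infty}$, which is not conserved, is a second one.

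The paper's inequality $\frac{\dd}{\dd t}\|\zeta\|_{H^s}\le C(1+a\|\nabla w\|_{L^\infty})\|\zeta\|_{H^s}$ contains no such term. It rests on pairing those derivatives with the conserved $\|\zeta\|_{L^\infty}$ instead. Concretely:
\begin{itemize}
 \item Expand $[\partial^\alpha,w\cdot\nabla]\zeta$ into terms $\partial^\beta w\cdot\nabla\partial^{\alpha-\beta}\zeta$, and keep the $|\beta|=1$ terms with $\|\nabla w\|_{L^\infty}$.
 \item For $2\le k=|\beta|\le s$, use H\"older with exponents $2s/(k-1)$ and $2s/(s+1-k)$.
 \item Use the $L^p$-boundedness of $\nabla K*$ for $1<p<\infty$.
 \item Apply Gagliardo--Nirenberg, $\|D^j\zeta\|_{L^{2s/j}}\lesssim\|\zeta\|_{L^\infty}^{1-j/s}\|D^s\zeta\|_{L^2}^{j/s}$.
\end{itemize}
Each such term is then at most $C\|\zeta\|_{L^\infty}\|D^s\zeta\|_{L^2}$, and $a\|\zeta\|_{L^\infty}\le A_0\|\Delta f\|_{L^\infty}$ is absorbed into the constant.

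Alternatively, first bound $\|\nabla\zeta\|_{L^\infty}$ double-exponentially. Use $\nabla\zeta(t,\Phi)=(D\Phi)^{-T}\nabla\zeta_0$ together with $\det D\Phi=1$ and a logarithmic estimate for $\nabla w$ in terms of $\|\nabla\zeta\|_{L^\infty}$; after that your inequality is linear in $y$. With either repair, your log-Gronwall step goes through as you describe.
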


\begin{proposition}[Uniform small-amplitude convergence]
\label{prop:small-a}
Fix the profile $f$, the radii $R,h$, and $0<\tau\le\tau_0$ furnished by
Proposition~\ref{prop:uniform-passive}.  There exist $a_*>0$ and
$C_{\rm prof}<\infty$ such that
\begin{equation}\label{eq:small-a}
 \sup_{q\in B_h}
 \|w_q^a-w_q^0\|_{C([0,\tau];W^{3,1}(B_R))}
 \le C_{\rm prof}a
 \qquad(0<a<a_*).
\end{equation}
After reducing $a_*$, one has
\begin{equation}\label{eq:nonlinear-output}
 \int_{B_R}|D^3w_q^a(\tau,y)|\,\dd y
 \ge\frac34Q
\end{equation}
for every $q\in B_h$ and $0<a<a_*$.
\end{proposition}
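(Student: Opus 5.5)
The plan is a stability estimate for the difference $\theta:=\zeta_q^a-\zeta_q^0$, followed by a reconstruction of $w_q^a-w_q^0=K*\theta$ in $W^{3,1}(B_R)$. We fix $A_0=1$ and $s$ large (say $s=6$) in Lemma~\ref{lem:profile-smooth-bounds}. This gives, uniformly in $q\in\overline{B_h}$, $0\le a\le1$ and $0\le t\le\tau$, a common support $B_{R_{\rm prof}}$ and a common bound $\|\zeta_q^a(t)\|_{H^s}\le C_{\rm prof}$. The passive profile is the case $a=0$ and is explicit: $\zeta_q^0(t)=(-\Delta f)(B_q(t)^{-1}\cdot)$. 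Subtracting the two equations \eqref{eq:passive-q} and \eqref{eq:nonlinear-profile} gives
\[
 \partial_t\theta+\bigl(M_q(t)y+aw_q^a\bigr)\cdot\nabla\theta
 =-a\,w_q^a\cdot\nabla\zeta_q^0,
 \qquad \theta(0)=0 .
\]
The drift is divergence-free. The right side is $a$ times a quantity bounded in $H^{s-1}$: indeed $w_q^a=K*\zeta_q^a$ is bounded in $W^{k,\infty}$ by Proposition~\ref{prop:BS-toolbox}(i) and Sobolev embedding, using conserved $L^1$ norms and the $H^s$ bound, and $\zeta_q^0$ is smooth with common compact support.

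Next I would run an $H^{s-2}$ energy estimate for $\theta$. The commutator terms from the drift are bounded by $\|\nabla M_q y\|$ on the common support, since $\theta$ is supported in $B_{R_{\rm prof}}$, together with $\|w_q^a\|_{W^{s-2,\infty}}$ and $\|\theta\|_{H^{s-2}}$. Alternatively, and more robustly, I would differentiate and use that the transport term loses one derivative only on $\theta$, which can be absorbed by the $H^s$ bound on $\theta$ (because $\theta$ is the difference of two $H^s$-bounded functions). Gronwall then gives $\sup_{t\le\tau}\|\theta(t)\|_{H^{s-2}}\le C a$. Since $\theta$ lives in $B_{R_{\rm prof}}$, the same bound holds in $L^1$ and in $W^{2,1}$ by Cauchy--Schwarz, with constant depending on $R_{\rm prof}$. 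The constants depend only on $f,\tau,h$ and the background, so the estimate is uniform in $q$.

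For the velocity we use that $\theta(t)$ has zero mass and zero first moment by Lemma~\ref{lem:profile-moments}. Proposition~\ref{prop:BS-toolbox}(iii) with $g=\theta(t)$ and $R_*=R_{\rm prof}$ gives $\|K*\theta\|_{H^{s-1}}\le C(\|\theta\|_{H^{s-2}}+\|\theta\|_{L^1})\le Ca$. With $s-1\ge3$, Cauchy--Schwarz on the bounded ball $B_R$ yields $\|w_q^a-w_q^0\|_{W^{3,1}(B_R)}\le C|B_R|^{1/2}a$. Continuity in time into $W^{3,1}(B_R)$ follows from the same convergence statement in (iii) applied to $\theta(t)$ as $t$ varies, because $\theta\in C([0,\tau];H^{s-2})$. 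This proves \eqref{eq:small-a}.

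Finally, for \eqref{eq:nonlinear-output}: by the choice of $h$, \eqref{eq:uniform-passive} holds with $w_q=w_q^0(\tau)$, so $\int_{B_R}|D^3w_q^0(\tau)|>Q$. The triangle inequality then gives $\int_{B_R}|D^3w_q^a(\tau)|\ge Q-C_{\rm prof}a\ge\frac34Q$ once $a_*\le Q/(4C_{\rm prof})$.

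I expect the main obstacle to be the uniformity of the energy estimate. The drift $M_q(t)y$ is unbounded, so the estimate has to be localized using the uniform support bound \eqref{eq:profile-support}. In addition, the loss of one derivative in $w_q^a\cdot\nabla\zeta_q^0$ and in the commutators must be paid for with the uniform $H^s$ bound of Lemma~\ref{lem:profile-smooth-bounds}, working at a level two derivatives below $s$.
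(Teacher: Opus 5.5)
Your proposal is correct and follows essentially the same route as the paper. The paper also subtracts the profile equations, runs a high-order transport estimate on the difference using the uniform support and $H^s$ bounds of Lemma~\ref{lem:profile-smooth-bounds}, and obtains an $O(a)$ bound by Gronwall. It then reconstructs $w_q^a-w_q^0$ via Proposition~\ref{prop:BS-toolbox}\textup{(iii)} and zero mass, passes to $W^{3,1}(B_R)$ by H\"older, and concludes \eqref{eq:nonlinear-output} by the triangle inequality.

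The remaining differences are cosmetic. You work at $H^{s-2}$ and bound $w_q^a$ via part \textup{(i)} plus Sobolev embedding, while the paper works at $H^{s-1}$ with $s\ge8$ and controls $w_q^a$ in $H^{s+1}$ via part \textup{(iii)}. Your ``alternative'' of absorbing the transport term with the $H^s$ bound on the difference is unnecessary and would only yield an $O(1)$ bound, but the main energy-estimate argument does not use it.
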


The proofs of Lemma~\ref{lem:profile-smooth-bounds} and
Proposition~\ref{prop:small-a}, including the uniformity in $(q,a)$, are
given in Appendix~\ref{app:profile-estimates}.  They are self-contained
a priori estimates based on characteristics, the logarithmic Biot--Savart
bound, and high-order transport inequalities; deferring them keeps the
affine amplification mechanism visible in the main text.

\begin{remark}[No uniformity over bad profiles]
The constants $a_*$ and $C_{\rm prof}$ may depend badly on $f$.  This is not
a defect: $f$ is fixed first, and only afterward is the number of packets
$J$ increased so that $a=\eps/J<a_*$.  The argument never compares the
higher Sobolev norms of a sequence of bad profiles with the corresponding
nonmultiplier gains.
\end{remark}

\section{Moving microscopic packets and their realization}
\label{sec:packets}

\subsection{Physical packet construction and realization}

Fix from now on $0<\tau\le\tau_0$ and the objects
$f,R,h,Q,a_*$ obtained above.  Choose $R_0$ such that
$\supp f\subset B_{R_0}$.  Once an integer $J$ is fixed, choose distinct
labels
\begin{equation*}
 q_1,\ldots,q_J\in B_h.
\end{equation*}
For fixed $J$, compactness and the bi-Lipschitz property of the background
flow give
\begin{equation}\label{eq:trajectory-separation}
 d_J:=\min_{j\ne k}\min_{0\le t\le\tau}
 |X_{q_j}(t)-X_{q_k}(t)|>0.
\end{equation}
For $a,r>0$, define the initial perturbation stream function and velocity by
\begin{equation}\label{eq:packet-stream}
 \Psi_0^{J,r}(x)
 =ar^2\sum_{j=1}^Jf\left(\frac{x-q_j}{r}\right),
 \qquad
 v_0^{J,r}=\nabla^\perp\Psi_0^{J,r},
 \qquad
 U_0^{J,r}=\bar u_0+v_0^{J,r}.
\end{equation}
For sufficiently small $r$, the packet supports are disjoint and lie inside
$B_{2\rho}$.

\begin{lemma}[Initial $W^{3,1}$ cost]
\label{lem:initial-cost}
There is a universal constant $C_0\ge1$, depending only on the chosen tensor
norm, such that
\begin{equation}\label{eq:initial-cost}
 \|v_0^{J,r}\|_{W^{3,1}}
 \le C_0Ja\|f\|_{\dot W^{4,1}}
 +C_fJa(r+r^2+r^3).
\end{equation}
In particular, if $Ja=\eps$ and $\|f\|_{\dot W^{4,1}}\le1$, then
\begin{equation*}
 \limsup_{r\to0}\|v_0^{J,r}\|_{W^{3,1}}\le C_0\eps.
\end{equation*}
\end{lemma}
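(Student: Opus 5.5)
The bound follows from a scaling computation applied to each packet, followed by the triangle inequality over $j=1,\dots,J$. Fix $j$ and set $\Psi_j(x)=ar^2f((x-q_j)/r)$, so that $v_0^{J,r}=\sum_j\nabla^\perp\Psi_j$. The chain rule gives, for every multi-index $\alpha$,
\[
 \partial^\alpha\Psi_j(x)=ar^{2-|\alpha|}(\partial^\alpha f)\bigl((x-q_j)/r\bigr),
\]
and the change of variables $x=q_j+ry$ contributes a factor $r^2$ to $L^1$ norms. Hence
\[
 \|\partial^\alpha\Psi_j\|_{L^1}=ar^{4-|\alpha|}\|\partial^\alpha f\|_{L^1}.
\]
The velocity $\nabla^\perp\Psi_j$ involves first derivatives of $\Psi_j$. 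Its $\dot W^{k,1}$ seminorm, for $k=0,1,2,3$, is therefore controlled by a sum of $L^1$ norms of derivatives of order $k+1$, and this sum scales like $ar^{3-k}\|f\|_{\dot W^{k+1,1}}$ up to tensor-norm constants.

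The top-order term $k=3$ is the critical one, because it is scale invariant. Since $\nabla^\perp$ only rearranges first derivatives and inserts signs, the map $D^4\Psi_j\mapsto D^3\nabla^\perp\Psi_j$ is a fixed linear map between finite-dimensional tensor spaces. Thus
\[
 \|\nabla^\perp\Psi_j\|_{\dot W^{3,1}}\le C_0a\|f\|_{\dot W^{4,1}},
\]
where $C_0$ depends only on the chosen tensor norms and not on $f$, $r$, $a$ or $q_j$. The lower-order pieces $k=0,1,2$ produce $a(r^3\|f\|_{\dot W^{1,1}}+r^2\|f\|_{\dot W^{2,1}}+r\|f\|_{\dot W^{3,1}})$. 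All these norms are finite because $f\in C_c^\infty$, so this is bounded by $C_fa(r+r^2+r^3)$ with $C_f$ depending on $f$.

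Summing over the $J$ packets with the triangle inequality yields \eqref{eq:initial-cost}. Disjointness of the supports is not needed for this upper bound; for small $r$ it would even give equality of the $L^1$ sums. If $Ja=\eps$ and $\|f\|_{\dot W^{4,1}}\le1$, the right-hand side becomes $C_0\eps+C_f\eps(r+r^2+r^3)$, and letting $r\to0$ gives the stated $\limsup$.

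The argument has no genuine obstacle. The only point requiring care is that $C_0$ must be universal: it comes solely from the fixed linear relation between $D^4\Psi$ and $D^3\nabla^\perp\Psi$, whereas all $f$-dependence is pushed into the lower-order constant $C_f$. Keeping the two constants separated in this way is what allows $f$ to be a bad profile with large lower-order norms without affecting the leading cost.
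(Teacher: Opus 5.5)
Your proof is correct and is essentially the paper's argument: the per-packet scaling identity $\|D^k v_{0,j}^{J,r}\|_{L^1}=ar^{3-k}\|D^k\nabla^\perp f\|_{L^1}$, with the scale-invariant $k=3$ term bounded by a tensor-norm constant times $a\|f\|_{\dot W^{4,1}}$ and the lower-order terms absorbed into $C_f$. The only difference is that you sum over the packets with the triangle inequality rather than using disjointness of supports; this gives the same upper bound and, as you note, does not require the supports to be disjoint.
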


\begin{proof}
For one packet,
\[
 v_{0,j}^{J,r}(x)
 =ar\,(\nabla^\perp f)\left(\frac{x-q_j}{r}\right).
\]
For $0\le k\le3$,
\begin{equation*}
 \|D^kv_{0,j}^{J,r}\|_{L^1}
 =ar^{3-k}\|D^k\nabla^\perp f\|_{L^1}.
\end{equation*}
At $k=3$ this is comparable to $a\|f\|_{\dot W^{4,1}}$.  The packet
supports are disjoint, so all $L^1$ norms add over $j$.
\end{proof}

Let $U^{J,r}$ be the classical Euler solution with datum $U_0^{J,r}$ and
set
\begin{equation*}
 v=U^{J,r}-\bar u,
 \qquad
 \Theta=\curl v.
\end{equation*}
The total vorticities of both $U^{J,r}$ and $\bar u$ satisfy the two moment
conditions in Lemma~\ref{lem:moments}, because both initial velocities are
generated by compactly supported stream functions.  Thus $U^{J,r}(t)$,
$\bar u(t)$, and $v(t)$ belong to $W^{3,1}$ for every finite time.

Let $\Phi(t,s,x)$ denote the flow of the full perturbed velocity
$\bar u+v$.  Decompose the perturbation vorticity by defining
$\Theta_j$ through
\begin{equation*}
 \partial_t\Theta_j+(\bar u+v)\cdot\nabla\Theta_j=0,
 \qquad
 \Theta_j(0,x)=a\zeta_0\left(\frac{x-q_j}{r}\right),
 \qquad
 \zeta_0=-\Delta f,
\end{equation*}
and set
\begin{equation*}
 \Theta_{\rm p}=\sum_{j=1}^J\Theta_j,
 \qquad
 \Theta_{\rm e}=\Theta-\Theta_{\rm p}.
\end{equation*}
Then
\begin{equation}\label{eq:theta-e}
 \partial_t\Theta_{\rm e}+(\bar u+v)\cdot\nabla\Theta_{\rm e}
 =-v\cdot\nabla\bar\omega,
 \qquad
 \Theta_{\rm e}(0)=0.
\end{equation}
The components $\Theta_j$ are transported by one common volume-preserving
flow, so their supports remain disjoint.  Since $\int\zeta_0=0$,
\begin{align}
 \int\Theta_j(t,x)\,\dd x&=0,
 \notag\\
 \sum_{j=1}^J\|\Theta_j(t)\|_{L^1}
 &=Jar^2\|\zeta_0\|_{L^1},
 \label{eq:packet-L1}\\
 \|\Theta_{\rm p}(t)\|_{L^\infty}
 &=a\|\zeta_0\|_{L^\infty}.
 \label{eq:packet-Linfty}
\end{align}
The last identity is where disjointness under the common flow is used.

The construction is reduced to the following quantitative statement, proved
in the remainder of this section.  No later part of the norm-inflation
argument uses the internal bootstrap estimates separately.

\begin{proposition}[Packet realization]
\label{prop:packet-realization}
Fix $0<\tau\le\tau_0$, a profile $f$ and radii $R,h$ furnished by
Proposition~\ref{prop:uniform-passive}, an amplitude $0<a<a_*$, a finite
integer $J$, and distinct labels $q_1,\ldots,q_J\in B_h$.  Let
$U^{J,r}=\bar u+v$ be the Euler solution generated by
\eqref{eq:packet-stream}.  There exist $C_{\rm real}<\infty$ and
$r_{\rm real}>0$, depending on these fixed objects but not on $r$, such that
for $0<r<r_{\rm real}$ the observation sets
\begin{equation}\label{eq:realization-sets}
 \Omega_j=X_{q_j}(\tau)+rB_R
\end{equation}
are pairwise disjoint and
\begin{equation}\label{eq:realization-D3}
 \max_{1\le j\le J}
 \left\|
  r^2D_x^3v(\tau,X_{q_j}(\tau)+r\,\cdot)
  -aD_y^3w_{q_j}^a(\tau)
 \right\|_{L^1(B_R)}
 \le C_{\rm real}ar.
\end{equation}
\end{proposition}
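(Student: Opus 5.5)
We plan to prove Proposition~\ref{prop:packet-realization} by a bootstrap on $[0,\tau]$ that runs in two regimes: a macroscopic estimate for the external response $\Theta_{\rm e}$, and a microscopic moving-frame estimate for each packet $\Theta_j$.

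\emph{Step 1: bootstrap hypotheses and external response.}
Up to a maximal time $T^*\le\tau$, we assume the following:
\begin{itemize}
 \item[(a)] $\supp\Theta_j(t)\subset X_{q_j}(t)+rB_{2R_{\rm prof}}$;
 \item[(b)] $\supp\Theta_{\rm e}(t)\subset\Nbh_{4\kappa}(\cZ_t)$;
 \item[(c)] $\|v(t)\|_{L^\infty(\Nbh_{4\kappa}(\cZ_t))}\le 2C_1ar^3$;
 \item[(d)] the rescaled profile bound stated in Step~2.
\end{itemize}
Under (a), each packet lies in $\Nbh_{\kappa}(\cP_t)$ for small $r$ (labels in $B_h\subset B_{2\rho}$). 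This set is at distance $\ge d_0/2$ from $\Nbh_{4\kappa}(\cZ_t)$. Each $\Theta_j$ has zero mass and satisfies \eqref{eq:packet-L1}. The improved estimate in Proposition~\ref{prop:BS-toolbox}(ii) with $\varrho\sim r$ therefore gives
\[
\|D^mK*\Theta_{\rm p}\|_{L^\infty(\Nbh_{4\kappa}(\cZ_t))}\lesssim r\cdot Jar^2 .
\]
Equation \eqref{eq:theta-e} has source $-v\cdot\nabla\bar\omega$, which is supported on $\cZ_t$. Using $\|\nabla\bar\omega\|_{L^\infty}\lesssim1$ and transport along $\Phi$, we get $\|\Theta_{\rm e}\|_{L^1\cap L^\infty}\lesssim Ja r^3$. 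By Proposition~\ref{prop:BS-toolbox}(i),(ii), $K*\Theta_{\rm e}$ is $O(ar^3)$ in $L^\infty$, and its derivatives are $O(ar^3)$ on the packet tubes, since the distance there is again $\ge d_0/2$. Lemma~\ref{lem:flow-comparison} shows that $\Phi$ stays within $O(ar)$ of $\bar\Phi$ on $\Nbh_{4\kappa}(\cZ_t)$. This closes (b) and (c) with $C_1$ doubled.

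\emph{Step 2: moving microscopic coordinates.}
For each $j$ we set
\[
\Theta_j(t,x)=a\,\tilde\zeta_j\bigl(t,(x-X_{q_j}(t))/r\bigr).
\]
Since $\dot X_{q_j}=\bar u(t,X_{q_j})$, the rescaled vorticity solves
\[
\partial_t\tilde\zeta_j+\bigl(M_{q_j}(t)y+a\tilde w_j+E_j\bigr)\cdot\nabla\tilde\zeta_j=0,
\qquad
\tilde w_j=K*\tilde\zeta_j .
\]
The error field $E_j$ has three contributions:
\begin{itemize}
 \item the Taylor remainder $r^{-1}\bigl[\bar u(X+ry)-\bar u(X)-r\nabla\bar u(X)y\bigr]=O(r)$ in $C^k(B_{2R_{\rm prof}})$;
 \item the cross-packet fields $\sum_{k\ne j}K*\Theta_k$ after rescaling by $r^{-1}$, which are $O(Jar^2/d_J)$ in $C^k$ by zero mass and Proposition~\ref{prop:BS-toolbox}(ii) with separation $d_J$ from \eqref{eq:trajectory-separation};
 \item the external field $r^{-1}K*\Theta_{\rm e}$, which is $O(ar^2)$ in $C^k$ near the packet by Step~1.
 \end{itemize}
Hence $\|E_j\|_{C^k(B_{2R_{\rm prof}})}\le Cr$, where $C$ depends on $J$, $d_J$, $a$ and the background, but not on $r$.

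\emph{Step 3: comparison with the nonlinear profile.}
Next we compare $\tilde\zeta_j$ with $\zeta_{q_j}^a$ from \eqref{eq:nonlinear-profile}. Both start from $-\Delta f$. The difference $\eta=\tilde\zeta_j-\zeta^a_{q_j}$ satisfies a transport equation with forcing
\[
-a\,(K*\eta)\cdot\nabla\zeta^a_{q_j}-E_j\cdot\nabla\tilde\zeta_j .
\]
We run an $H^s$ energy estimate, $s\ge4$, on compact support in $B_{2R_{\rm prof}}$. The inputs are Lemma~\ref{lem:profile-smooth-bounds}, the bound $\|K*\eta\|_{H^{s+1}}\lesssim\|\eta\|_{H^s}+\|\eta\|_{L^1}$ from Proposition~\ref{prop:BS-toolbox}(iii) (available since $\eta$ has zero mass), and the bootstrap bound $\|\tilde\zeta_j\|_{H^{s+1}}\le2C_{\rm prof}$. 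Gronwall then gives $\sup_t\|\eta\|_{H^s}\le Cr$. This also closes (a) by characteristic comparison (Lemma~\ref{lem:flow-comparison}) and (d).

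The main obstacle is closing the top-order norm of $\tilde\zeta_j$. Commutators with $E_j$ must only use $C^k$ bounds on $E_j$, which is why the error must be controlled in $C^k$ rather than in $L^\infty$. We will therefore propagate the higher-norm bound at level $s+1$ with $O(1)$ constants, and the difference bound at level $s$ with $O(r)$ constants.

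\emph{Step 4: conclusion.}
On $\Omega_j$, which are disjoint once $rR<d_J/2$, we decompose
\[
r^2D^3_xv(\tau,X_{q_j}+r\cdot)=aD^3\tilde w_j+r^2D^3_x\Bigl(\sum_{k\ne j}K*\Theta_k+K*\Theta_{\rm e}\Bigr)(X_{q_j}+r\cdot).
\]
The second term is $O(r\cdot r)$ in $L^\infty(B_R)$ by Step~2. For the first, $\|D^3K*\eta\|_{L^1(B_R)}\lesssim_R\|K*\eta\|_{H^4}\lesssim\|\eta\|_{H^3}+\|\eta\|_{L^1}\le Cr$. Together these give \eqref{eq:realization-D3} with error $C_{\rm real}ar$.
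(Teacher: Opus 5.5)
Your argument is correct and has the same skeleton as the paper's proof:
\begin{enumerate}
\item a zero-circulation $O(r^3)$ gain on the background-vorticity tube;
\item moving rescaled coordinates with $O(r)$ Taylor and $O(r^2)$ cross and external errors;
\item an $H^s$ comparison with $\zeta^a_{q_j}$;
\item an estimate of $D^3K*(\tilde\zeta_j-\zeta^a_{q_j})$ on $B_R$.
\end{enumerate}
Two steps are organized differently.

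\emph{External response.} You bound $\Theta_{\rm e}$ only in $L^1\cap L^\infty$, by integrating $-v\cdot\nabla\bar\omega$ along the divergence-free flow and applying Proposition~\ref{prop:BS-toolbox}(i),(ii). The paper instead runs the $H^{s_*}$ localized response bootstrap of Lemma~\ref{lem:localized-response}. The proposition only needs $\|\Theta_{\rm e}\|_{L^1}$ on the packet side and $\|v\|_{L^\infty}$ near $\cZ_t$ on the source side, so your route is lighter and does not use $\int\Theta_{\rm e}=0$. However, the feedback term $v_{\rm e}\cdot\nabla\bar\omega$ prevents (c) from closing by a fixed-constant improvement unless $\tau$ is small. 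Instead, note that $Y=\|\Theta_{\rm e}\|_{L^1}+\|\Theta_{\rm e}\|_{L^\infty}$ obeys $Y(t)\le C\int_0^t(C_{\rm far}Jar^3+CY(s))\,\dd s$, and take $C_1$ from Gronwall. This is exactly the role of the Gronwall step in Lemma~\ref{lem:localized-response}.

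\emph{Comparison step.} You transport the difference by $M_{q_j}y+a\tilde w_j$ and put $E_j\cdot\nabla\tilde\zeta_j$ in the forcing, which costs a derivative of the packet vorticity. The paper keeps the whole coefficient error $F_{j,r}$ in the transport field, after the divergence-free extension of Lemma~\ref{lem:df-extension}. Its forcing $(aK*d_{j,r}+F_{j,r})\cdot\nabla\zeta^a_{q_j}$ then involves only the uniformly smooth profile, and the estimate closes at a single level. Your ``main obstacle'' is therefore created by your splitting. It is surmountable, but the $H^{s+1}$ bound on $\tilde\zeta_j$ cannot be closed at $2C_{\rm prof}$ by comparison with the profile, because the difference is controlled only at level $s$. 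It must come from the packet's own equation, for example:
\begin{itemize}
\item a linear $H^{s+1}$ transport estimate whose coefficients are bounded via the $H^s$ difference bootstrap and $\|E_j\|_{C^{s+1}}\le Cr$; or
\item the logarithmic/Osgood argument of Lemma~\ref{lem:profile-smooth-bounds}.
\end{itemize}

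Finally, the $C^k$ bounds on $E_j$ used in Steps~2 and~4 should be taken on a ball containing both $B_{2R_{\rm prof}}$ and the observation ball $B_R$, as the paper does with $\mathcal R$.
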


We now prove Proposition~\ref{prop:packet-realization}.  The estimates are
organized by scale and by the region in which the Biot--Savart field is
observed.

\subsection{Localized response estimate}

The external vorticity is created only where $\nabla\bar\omega$ is
nonzero.  We isolate the Sobolev estimate needed there.  Its formulation is
local in the prescribed forcing field but global in the unknown response.

\begin{lemma}[Localized response bootstrap]
\label{lem:localized-response}
Let $s\ge5$.  Let $\bar u,\bar\omega$ be fixed smooth fields on
$[0,\tau]\times\R^2$, with $\bar u$ divergence-free.  Let $g$ be a smooth
divergence-free field and let $q$ solve
\begin{equation}\label{eq:abstract-response}
 \partial_tq+(\bar u+g+K*q)\cdot\nabla q
 =-(g+K*q)\cdot\nabla\bar\omega,
 \qquad q(0)=0.
\end{equation}
Assume:
\begin{enumerate}[label=\textup{(\roman*)},leftmargin=2.4em]
 \item $\int q(t)=0$;
 \item $\supp q(t)$ is contained in a fixed compact set $\cE$;
 \item for a fixed neighborhood $\cE_1$ of
 $\cE\cup\bigcup_t\supp\nabla\bar\omega(t)$,
 \begin{equation*}
  \sup_{0\le t\le\tau}\|g(t)\|_{C^{s+1}(\cE_1)}\le\mu.
 \end{equation*}
\end{enumerate}
Then there are $\mu_0,C>0$, depending only on the fixed background,
$\cE_1,s$, and $\tau$, such that for $0<\mu\le\mu_0$,
\begin{equation*}
 \sup_{0\le t\le\tau}
 \bigl(\|q(t)\|_{H^s}+\|q(t)\|_{L^1}\bigr)
 \le C\mu.
\end{equation*}
Consequently,
\begin{equation}\label{eq:localized-velocity-bound}
 \sup_{0\le t\le\tau}
 \|K*q(t)\|_{H^{s+1}}\le C\mu.
\end{equation}
\end{lemma}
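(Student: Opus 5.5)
We plan a continuity (bootstrap) argument on $[0,\tau]$ for the quantity
\[
 N(t):=\|q(t)\|_{H^s}+\|q(t)\|_{L^1}.
\]
The bootstrap hypothesis is $N(t)\le 2C_1\mu$ on $[0,t_*]$, where $C_1$ will be fixed below and $\mu\le\mu_0$ is small. Set $w:=K*q$. Since $q$ has zero mass and support in the fixed compact set $\cE$, Proposition~\ref{prop:BS-toolbox}(iii) gives
\[
 \|w\|_{H^{s+1}}\le C_{\cE}\,N .
\]
Sobolev embedding ($s\ge5$) then gives $\|w\|_{C^{s-1}}\lesssim N$ and $\|\nabla w\|_{L^\infty}\lesssim N$. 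The same bound at the end of the argument yields \eqref{eq:localized-velocity-bound}. The total transporting field is
\[
 b=\bar u+g+w .
\]
It is divergence-free, and on $\cE_1$ it is bounded in $C^{s+1}$ by a background constant plus $\mu+N$.

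\textbf{$L^1$ bound.} Because $b$ is divergence-free, the $L^1$ norm evolves by
\[
 \frac{\dd}{\dd t}\|q\|_{L^1}\le \|(g+w)\cdot\nabla\bar\omega\|_{L^1}.
\]
The forcing is supported in $\bigcup_t\supp\nabla\bar\omega(t)\subset\cE_1$, which is a fixed compact set. Hypothesis (iii) and the bound on $w$ therefore give
\[
 \frac{\dd}{\dd t}\|q\|_{L^1}\le C(\mu+N).
\]

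\textbf{$H^s$ bound.} We use the standard commutator energy estimate for transport. Since $q$ is supported in $\cE$, only the values of $b$ on a neighborhood of $\cE$ enter. In that neighborhood $\bar u$ and $g$ are controlled in $C^{s+1}$, and $w$ is controlled by $\|w\|_{H^{s+1}}\lesssim N$. The nonlinear commutator $[\partial^\alpha,w\cdot\nabla]q$ is estimated by Kato--Ponce type bounds, using $\|\nabla w\|_{L^\infty}\|q\|_{H^s}+\|w\|_{H^{s}}\|\nabla q\|_{L^\infty}\lesssim N^2$. To make this rigorous, one inserts a smooth cutoff $\chi_\cE$ equal to $1$ near $\cE$ and estimates $\chi_\cE g$ in $H^{s+1}$ by $C\mu$. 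The forcing $(g+w)\cdot\nabla\bar\omega$ lies in $H^s$ with norm at most $C(\mu+N)$, because $\nabla\bar\omega$ is smooth and compactly supported within $\cE_1$. Altogether,
\[
 \frac{\dd}{\dd t}N\le C_2\bigl(1+\mu+N\bigr)N+C_2\mu .
\]
Gronwall on $[0,\tau]$ with $N(0)=0$ gives $N(t)\le C_1\mu$ as long as $N\le1$ and $\mu\le1$. This strictly improves the bootstrap hypothesis once $2C_1\mu_0\le1$, and continuity in time closes the argument on all of $[0,\tau]$.

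\textbf{Main obstacle.} The delicate point is the high-order estimate. Hypothesis (iii) controls $g$ only on $\cE_1$ and not globally, and $w$ is nonlocal. The first difficulty is resolved by localization: $q$ and $\nabla q$ vanish outside $\cE$, so every term of the form $\partial^\beta b\,\partial^\gamma q$ involves $b$ only on $\cE$. We would therefore replace $g$ by $\chi_\cE g$ before applying the commutator estimates. For $w$, no localization is needed, since its global $H^{s+1}$ norm is already controlled by Proposition~\ref{prop:BS-toolbox}(iii). That proposition requires zero mass and fixed support, and these are exactly hypotheses (i) and (ii).
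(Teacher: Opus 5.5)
Your proposal is correct and is essentially the paper's proof. Both use the same quantity $\|q\|_{H^s}+\|q\|_{L^1}$, the zero-mass bound of Proposition~\ref{prop:BS-toolbox}(iii) for $K*q$, the $L^1$ estimate driven by the source on $\supp\nabla\bar\omega$, and a localized commutator estimate giving $N'\le CN+CN^2+C\mu$, closed by a Gronwall bootstrap; bootstrapping $N\le 2C_1\mu$ rather than the paper's $X\le 1$ is immaterial. One minor slip does not affect the argument: $\|K*q\|_{H^{s+1}}$ does not control $K*q$ in $C^{s+1}(\cE_1)$ (Sobolev embedding only gives $C^{s-1}$), but your commutator step uses only Sobolev norms of $w$.
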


\begin{proof}
Put
\[
 X(t)=\|q(t)\|_{H^s}+\|q(t)\|_{L^1},
 \qquad h=K*q.
\]
By Proposition~\ref{prop:BS-toolbox}\textup{(iii)} and Sobolev embedding,
\begin{equation}\label{eq:h-from-X}
 \|h(t)\|_{H^{s+1}}+\|h(t)\|_{W^{1,\infty}}
 \le C X(t).
\end{equation}
Since the transport field is divergence-free, the smooth $L^1$ inequality
is
\begin{align}
 \frac{\dd}{\dd t}\|q(t)\|_{L^1}
 &\le \|(g+h)\cdot\nabla\bar\omega\|_{L^1}\notag\\
 &\le C_{\bar\omega,\cE_1}
 \left(\|g\|_{L^\infty(\cE_1)}+\|h\|_{L^\infty}\right)
 \le C\mu+CX(t).
 \label{eq:response-L1}
\end{align}

For the $H^s$ estimate, set $b=\bar u+g+h$ and
$F=-(g+h)\cdot\nabla\bar\omega$.  Applying $D^\alpha$,
$|\alpha|\le s$, and using $\diver b=0$ yields
\begin{align}
 \frac12\frac{\dd}{\dd t}\|q\|_{H^s}^2
 &\le C\left(
  1+\|g\|_{C^{s+1}(\cE_1)}+\|h\|_{H^{s+1}}
  \right)\|q\|_{H^s}^2
  +\|F\|_{H^s}\|q\|_{H^s}.
 \label{eq:response-Hs-square}
\end{align}
Only local derivatives of $g$ enter because every derivative of $q$ is
supported in $\cE$, while all factors containing derivatives of
$\bar\omega$ are supported in $\cE_1$.  The Moser product estimate and
\eqref{eq:h-from-X} give
\begin{equation*}
 \|F\|_{H^s}
 \le C\left(\mu+\|h\|_{H^{s+1}}\right)
 \le C(\mu+X).
\end{equation*}
After dividing \eqref{eq:response-Hs-square} by $\|q\|_{H^s}$ through the
usual regularization $\sqrt{\|q\|_{H^s}^2+\nu^2}$ and letting $\nu\downarrow0$,
we obtain, on the interval where $X\le1$,
\begin{equation}\label{eq:response-Hs}
 \frac{\dd}{\dd t}\|q(t)\|_{H^s}
 \le C X(t)+C X(t)^2+C\mu.
\end{equation}
Combining \eqref{eq:response-L1}--\eqref{eq:response-Hs},
\begin{equation*}
 X'(t)\le CX(t)+CX(t)^2+C\mu,
 \qquad X(0)=0.
\end{equation*}
On the maximal interval on which $X\le1$, Gronwall's inequality yields
$X(t)\le C_\tau\mu$.  Choosing $\mu_0$ so that
$C_\tau\mu_0<1/2$ closes the bootstrap on $[0,\tau]$.
Equation \eqref{eq:localized-velocity-bound} follows from
Proposition~\ref{prop:BS-toolbox}\textup{(iii)}.
\end{proof}

\begin{remark}[Why the support hypothesis is included]
The global field $g$ need not be small.  Only its behavior on the compact
region where the response and the fixed background source are supported
enters the estimate.  This is exactly the situation for the packet field:
it is $O(r)$ globally but $O(r^3)$ on the background-vorticity tube.
\end{remark}

\subsection{Macroscopic bootstrap and support propagation}
\label{sec:macro}

We now prove simultaneously that the packet vorticities stay in the
material zero-vorticity region, that the external response stays near the
background-vorticity region, and that the perturbation velocity is $O(r)$.
This section makes the support argument and all bootstrap dependencies
explicit.

Fix for the moment
\begin{equation}\label{eq:fixed-macro-parameters}
 f,\quad a>0,\quad J\in\N,\quad
 q_1,\ldots,q_J\in B_h,\quad 0<\tau\le\tau_0.
\end{equation}
Only $r$ will tend to zero.  Let
\[
 v_{\rm p}=K*\Theta_{\rm p},
 \qquad
 v_{\rm e}=K*\Theta_{\rm e},
 \qquad
 v=v_{\rm p}+v_{\rm e}.
\]
Fix an integer $s_*\ge10$.

\begin{proposition}[Macroscopic localization and external response]
\label{prop:macro}
For the fixed parameters in \eqref{eq:fixed-macro-parameters}, there exist
constants $C_{\rm mac}<\infty$, $R_*>0$, and $r_{\rm mac}>0$ such that for
$0<r<r_{\rm mac}$ and $0\le t\le\tau$,
\begin{align}
 &\|v(t)\|_{L^\infty}\le C_{\rm mac}r,
 \notag\\
 &\|\Theta_{\rm e}(t)\|_{L^1}
  +\|\Theta_{\rm e}(t)\|_{H^{s_*}}
  \le C_{\rm mac}r^3,
 \label{eq:theta-e-small}\\
 &\|v_{\rm e}(t)\|_{H^{s_*+1}}
  +\|v_{\rm e}(t)\|_{W^{1,\infty}}
  \le C_{\rm mac}r^3,
 \label{eq:ve-small}\\
 &\supp\Theta_j(t)
  \subset X_{q_j}(t)+rB_{R_*},
 \label{eq:packet-support}\\
 &\supp\Theta_{\rm e}(t)
  \subset\Nbh_{\kappa/2}(\cZ_t).
 \label{eq:external-support-final}
\end{align}
The packet supports are pairwise separated, and their distance from the
external support is bounded below by $d_0/4$.
\end{proposition}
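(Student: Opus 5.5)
The proof is a continuity (bootstrap) argument in time for fixed $r$. The bootstrap hypotheses are weakened versions of the conclusions: $\|v(t)\|_{L^\infty}\le 2C_1r$, $\supp\Theta_j(t)\subset X_{q_j}(t)+2rB_{R_*}$, $\supp\Theta_{\rm e}(t)\subset\Nbh_{\kappa}(\cZ_t)$, and $\|\Theta_{\rm e}\|_{L^1}+\|\Theta_{\rm e}\|_{H^{s_*}}\le 2C_2r^3$, assumed on a maximal interval $[0,T^*]\subset[0,\tau]$. All of them hold at $t=0$: $\Theta_{\rm e}(0)=0$, and the packets sit in $q_j+rB_{R_0}$. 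Since the solution is smooth for fixed $r$, every quantity is continuous in $t$, so it suffices to recover each bound with constant $C_1,C_2$ (or with radius $R_*$, $\kappa/2$) once $r<r_{\rm mac}$. The constants $C_1,C_2,R_*$ are chosen depending on $f,a,J,\tau$ and the background before $r$ is sent to zero.

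\emph{Step 1: the packet field.} Proposition~\ref{prop:BS-toolbox}(i) together with \eqref{eq:packet-L1}--\eqref{eq:packet-Linfty} gives $\|v_{\rm p}\|_{L^\infty}\le C(Jar^2)^{1/2}a^{1/2}\le C_1 r/2$. Now evaluate $v_{\rm p}$ on $\cE_1:=\Nbh_{2\kappa}(\cZ_t)$. Under the bootstrap the packets lie in $\Nbh_{4\kappa}(\cP_t)$, at distance at least $d_0/2$ from $\cE_1$. Each $\Theta_j$ has zero mass and support radius $2rR_*$, so the improved estimate of Proposition~\ref{prop:BS-toolbox}(ii) gives $\|D^m v_{\rm p}\|_{L^\infty(\cE_1)}\le C_m\, r\sum_j\|\Theta_j\|_{L^1}\le C_m Jar^3$ for all $m\le s_*+2$. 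This is the hypothesis $\mu=Cr^3$ of Lemma~\ref{lem:localized-response}.

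\emph{Step 2: the external response.} Equation \eqref{eq:theta-e} has exactly the form \eqref{eq:abstract-response} with $q=\Theta_{\rm e}$ and $g=v_{\rm p}$; indeed $v=g+K*\Theta_{\rm e}$. The hypotheses of Lemma~\ref{lem:localized-response} are verified as follows. Zero mass holds because the total perturbation vorticity and every $\Theta_j$ have zero integral. The support condition holds by the bootstrap with $\cE=\overline{\Nbh_\kappa(\cZ_t)}$, taking the union over $t$, and the smallness of $g$ on $\cE_1$ comes from Step 1. For $r$ small ($Cr^3\le\mu_0$) the lemma gives $\|\Theta_{\rm e}\|_{L^1\cap H^{s_*}}\le C r^3$ and $\|v_{\rm e}\|_{H^{s_*+1}}\le Cr^3$, which closes \eqref{eq:theta-e-small} with $C_2$ fixed by the lemma's constant. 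Sobolev embedding then yields \eqref{eq:ve-small}, and in particular $\|v\|_{L^\infty}\le C_1r/2+Cr^3<C_1r$.

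\emph{Step 3: supports.} Both $\Theta_j$ and $\Theta_{\rm e}$ are transported by $\bar u+v$, while the source $-v\cdot\nabla\bar\omega$ in \eqref{eq:theta-e} is supported in $\cZ_t$. A point of $\supp\Theta_{\rm e}(t)$ is therefore the image under $\Phi(t,s,\cdot)$ of a point of $\cZ_s$. Lemma~\ref{lem:flow-comparison} with $b=\bar u$ and $z=v$ compares this with the background image in $\cZ_t$, giving a displacement at most $e^{L\tau}\tau C_1r<\kappa/2$. For the packets the same argument only gives centre drift $O(r)$, which does not stay inside radius $rR_*$, so the relative displacement must be handled more carefully. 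Write $Y=\Phi(t,0,x_0)$ with $x_0\in q_j+rB_{R_0}$. Split $Y-X_{q_j}$ into the background part $\bar\Phi(t,0,x_0)-X_{q_j}(t)$, which is $O(r)$ by the Lipschitz bound on $\bar\Phi$, plus the perturbation contribution, which satisfies $\frac{d}{dt}(Y-\bar\Phi)=\bar u(Y)-\bar u(\bar\Phi)+v(Y)$. The term $v(Y)$ is $O(r)$ only in the sup norm, so integrating over time gives a displacement $\lesssim r$, i.e.\ again of scale $r$. Choosing $R_*:=C_\Phi R_0+Ce^{L\tau}\tau(C_1+1)$ then recovers $\supp\Theta_j\subset X_{q_j}+rB_{R_*}$; note that $C_1$ does not depend on $R_*$, since the bound in Step 1 uses only $L^1$ and $L^\infty$ norms. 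Pairwise separation of the packets follows once $2rR_*<d_J$ from \eqref{eq:trajectory-separation}. Distance at least $d_0/4$ from the external support follows from the $\kappa$-neighbourhood bound together with $X_{q_j}(t)\in\cP_t$.

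The main obstacle is the circularity in Step 3. The packet support radius $R_*$ must not feed back into $C_1$; it does not, because Proposition~\ref{prop:BS-toolbox}(i) is support-independent. The cancellation constant in Step 1 depends on $R_*$, but that dependence only enters the choice of $r_{\rm mac}$. This is why the constants must be fixed in the order $C_1$, then $R_*$, then $C_2$, then $r_{\rm mac}$.
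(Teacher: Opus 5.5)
Your proposal is correct and follows essentially the same route as the paper. The argument is a continuity argument in which the support-independent bound of Proposition~\ref{prop:BS-toolbox}\textup{(i)} fixes the global $O(r)$ constant before the packet radius $R_*$ is chosen. The zero-circulation estimate of Proposition~\ref{prop:BS-toolbox}\textup{(ii)} then supplies $\mu=O(r^3)$ to Lemma~\ref{lem:localized-response}, and Lemma~\ref{lem:flow-comparison} closes both support bootstraps.

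The differences are cosmetic: you carry $\|v\|_{L^\infty}$ and an $r^3$-scaled threshold in the bootstrap, where the paper uses the threshold $1$. Your hesitation in Step~3 is also unnecessary. Comparing $\Phi(t,0,q_j+ry_0)$ directly with $X_{q_j}(t)$ via Lemma~\ref{lem:flow-comparison}, with initial offset $rR_0$, already gives the $O(r)$ bound, which is exactly what the paper does.
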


\begin{proof}
\emph{Step 1: Bootstrap setup.}

The global packet estimate below has a constant $C_{\rm p}$ depending only
on the fixed quantities in \eqref{eq:fixed-macro-parameters}; in particular,
it is independent of any support-bootstrap radius.  Let $C_{\rm tr}$ be the
constant in Lemma~\ref{lem:flow-comparison} for the background on
$[0,\tau]$, and choose once and for all
\begin{equation}\label{eq:Lstar-choice}
 L_*=2C_{\rm tr}\bigl(R_0+\tau(C_{\rm p}+1)\bigr)+2.
\end{equation}
For small $r$, define $T_r\in(0,\tau]$ to be the supremum of times
$T'\le\tau$ for which the following assertions hold for every
$0\le t\le T'$:
\begin{align}
 &\supp\Theta_j(t)\subset X_{q_j}(t)+rB_{L_*}
 \quad(1\le j\le J),
 \notag\\
 &\supp\Theta_{\rm e}(t)\subset\Nbh_{\kappa}(\cZ_t),
 \label{eq:bootstrap-external-support}\\
 &\|\Theta_{\rm e}(t)\|_{H^{s_*}}
 +\|\Theta_{\rm e}(t)\|_{L^1}\le1,
 \label{eq:bootstrap-external-norm}
\end{align}
At $t=0$ these properties hold if
$L_*>R_0$, because $\Theta_{\rm e}(0)=0$.  Thus $T_r>0$.

For all sufficiently small $r$, the bootstrap packet tubes lie in
$\Nbh_{4\kappa}(\cP_t)$ and are separated from the external tube:
\begin{equation}\label{eq:bootstrap-separation}
 \dist\left(
 X_{q_j}(t)+rB_{L_*+2},
 \Nbh_{2\kappa}(\cZ_t)
 \right)\ge\frac{d_0}{4}.
\end{equation}
They are also pairwise separated because
\begin{equation*}
 2r(L_*+2)<\frac{d_J}{2}.
\end{equation*}
These restrictions will be included in the final admissible scale.

\medskip
\noindent\emph{Step 2: Packet-field estimates.}

The transported packet supports remain disjoint under the common flow.  By
\eqref{eq:packet-L1}, \eqref{eq:packet-Linfty}, and
Proposition~\ref{prop:BS-toolbox}\textup{(i)},
\begin{align}
 \|v_{\rm p}(t)\|_{L^\infty}
 &\le C\|\Theta_{\rm p}(t)\|_{L^1}^{1/2}
          \|\Theta_{\rm p}(t)\|_{L^\infty}^{1/2}\notag\\
 &\le C\bigl(Jar^2\|\zeta_0\|_{L^1}\bigr)^{1/2}
          \bigl(a\|\zeta_0\|_{L^\infty}\bigr)^{1/2}
 \le C_{\rm p}r.
 \label{eq:vp-global}
\end{align}
The constant $C_{\rm p}$ depends only on the fixed quantities in
\eqref{eq:fixed-macro-parameters}, but not on $r$ or on the bootstrap radius.
Thus the choice \eqref{eq:Lstar-choice} is non-circular.

The same field is much smaller on the external tube.  Each $\Theta_j$ has
zero circulation, $L^1$ size $ar^2\|\zeta_0\|_{L^1}$, and support radius at
most $L_*r$.  By \eqref{eq:bootstrap-separation} and
Proposition~\ref{prop:BS-toolbox}\textup{(ii)}, for $0\le m\le s_*+1$,
\begin{align}
 \|D^mv_{\rm p}(t)\|_{L^\infty(\Nbh_{2\kappa}(\cZ_t))}
 &\le
 C_{m,d_0,L_*}
 \sum_{j=1}^J r\|\Theta_j(t)\|_{L^1}\notag\\
 &\le C_{\rm far}Jar^3\|\zeta_0\|_{L^1}
 \le C_{\rm far}r^3.
 \label{eq:vp-external-Cm}
\end{align}
This is the first occurrence of the full-space $O(r^3)$ gain.  It uses one
power of $r$ from zero circulation and two from the physical area of a
packet.

\medskip
\noindent\emph{Step 3: External response and closure.}

Equation~\eqref{eq:theta-e} has zero initial data, and its source is
supported in $\supp\nabla\bar\omega(t)=\cZ_t$.  Integrating along the
characteristics of the full velocity $\bar u+v$ gives
\begin{equation}\label{eq:theta-e-support-formula}
 \supp\Theta_{\rm e}(t)
 \subset
 \bigcup_{0\le\sigma\le t}
 \Phi(t,\sigma,\cZ_\sigma).
\end{equation}
For $z\in\cZ_\sigma$, compare the full trajectory
$Y(t)=\Phi(t,\sigma,z)$ with the background trajectory
$\bar Y(t)=\bar\Phi(t,\sigma,z)$.  Since
$\bar Y(t)\in\cZ_t$, Lemma~\ref{lem:flow-comparison} gives
\begin{equation}\label{eq:external-flow-comparison}
 \dist(Y(t),\cZ_t)
 \le C_{\rm flow}
 \int_\sigma^t\|v(s)\|_{L^\infty}\,\dd s.
\end{equation}
This estimate will close the support bootstrap once the $O(r)$ global
velocity bound has been proved.

Work on the bootstrap interval $[0,T_r]$.  By
\eqref{eq:bootstrap-external-support}, the unknown $\Theta_{\rm e}$ is
supported in a fixed compact tube.  The total perturbation vorticity and
every packet have zero integral, so
\begin{equation*}
 \int\Theta_{\rm e}(t,x)\,\dd x=0.
\end{equation*}
Equation \eqref{eq:theta-e} has precisely the form
\eqref{eq:abstract-response} with
\[
 q=\Theta_{\rm e},
 \qquad g=v_{\rm p},
 \qquad K*q=v_{\rm e}.
\]
The local smallness hypothesis in Lemma~\ref{lem:localized-response} follows
from \eqref{eq:vp-external-Cm}, with $\mu=C_{\rm far}r^3$.  Hence, after
requiring $r$ small enough that $\mu\le\mu_0$,
\begin{equation*}
 \sup_{0\le t\le T_r}
 \left(
 \|\Theta_{\rm e}(t)\|_{H^{s_*}}
 +\|\Theta_{\rm e}(t)\|_{L^1}
 \right)
 \le C_{\rm ext}r^3.
\end{equation*}
This is \eqref{eq:theta-e-small} on the bootstrap interval.  By
Proposition~\ref{prop:BS-toolbox}\textup{(iii)},
\begin{equation}\label{eq:ve-O-r3}
 \sup_{0\le t\le T_r}
 \|v_{\rm e}(t)\|_{H^{s_*+1}}
 \le C_{\rm ext}r^3,
\end{equation}
and Sobolev embedding gives the second term in \eqref{eq:ve-small}.
Choose $r$ smaller, if necessary, so that
$C_{\rm ext}r^2\le1$.  Combining \eqref{eq:vp-global} and
\eqref{eq:ve-O-r3} then gives the non-circular bound
\begin{equation}\label{eq:v-global-O-r}
 \sup_{0\le t\le T_r}\|v(t)\|_{L^\infty}
 \le (C_{\rm p}+1)r.
\end{equation}

We next improve packet localization.  Let
$Y(t)=\Phi(t,0,q_j+r y_0)$ with $y_0\in\supp\zeta_0$.  Compare it with
$X_{q_j}(t)$.  Lemma~\ref{lem:flow-comparison},
\eqref{eq:v-global-O-r}, and \eqref{eq:Lstar-choice} yield
\begin{align*}
 |Y(t)-X_{q_j}(t)|
 &\le C_{\rm tr}
 \left(rR_0+\int_0^t(C_{\rm p}+1)r\,\dd s\right)\\
 &\le C_{\rm tr}(R_0+\tau(C_{\rm p}+1))r
 <\frac{L_*r}{2}.
\end{align*}
Thus the packet-support bootstrap is strictly improved without choosing
$L_*$ from a constant that depends on $L_*$.

For the external support, \eqref{eq:external-flow-comparison} and
\eqref{eq:v-global-O-r} give
\begin{equation*}
 \dist\bigl(\Phi(t,\sigma,z),\cZ_t\bigr)
 \le C_{\rm flow}\tau(C_{\rm p}+1)r.
\end{equation*}
Choose $r$ so that the right-hand side is at most $\kappa/2$.  Then
\eqref{eq:theta-e-support-formula} improves
\eqref{eq:bootstrap-external-support} to
\eqref{eq:external-support-final}.
Finally, choose $r$ so that
$C_{\rm ext}r^3<1/2$; this strictly improves
\eqref{eq:bootstrap-external-norm}.  All bootstrap inequalities are thus
strictly improved at $T_r$.  By continuity, $T_r=\tau$.

Set $R_*=L_*$.  Taking $r$ additionally below
$d_J/(4(R_*+2))$ and below the geometric tube scale gives pairwise packet
separation and the fixed positive distance from the external support.
All estimates now hold on $[0,\tau]$.
\end{proof}

\subsection{Moving coordinates and rescaled error fields}
\label{sec:moving-coordinates}

Fix the parameters of Proposition~\ref{prop:macro}.  For each packet label
$q_j$, define
\begin{equation}\label{eq:rescaled}
 \eta_{j,r}(t,y)
 =\frac1a\Theta_j(t,X_{q_j}(t)+ry),
 \qquad
 W_{j,r}(t,y)
 =\frac{v(t,X_{q_j}(t)+ry)}{ar}.
\end{equation}
By \eqref{eq:packet-support},
\begin{equation*}
 \supp\eta_{j,r}(t)\subset B_{R_*}
 \qquad(0\le t\le\tau).
\end{equation*}
Differentiating \eqref{eq:rescaled}, using
$\dot X_{q_j}=\bar u(t,X_{q_j})$, and using the transport equation for
$\Theta_j$, we obtain the exact rescaled equation
\begin{equation}\label{eq:eta-raw}
 \partial_t\eta_{j,r}
 +\left[
 \frac{\bar u(t,X_{q_j}+ry)-\bar u(t,X_{q_j})}{r}
 +aW_{j,r}
 \right]\cdot\nabla_y\eta_{j,r}=0,
 \qquad
 \eta_{j,r}(0)=-\Delta f.
\end{equation}

\medskip
\noindent\emph{Background linearization.}

Taylor's formula gives
\begin{equation}\label{eq:bg-taylor}
 \frac{\bar u(t,X_{q_j}+ry)-\bar u(t,X_{q_j})}{r}
 =M_{q_j}(t)y+rG_{j,r}(t,y),
\end{equation}
where
\begin{equation*}
 G_{j,r}(t,y)
 =\int_0^1(1-\theta)
 D^2\bar u(t,X_{q_j}+\theta ry)[y,y] \,\dd\theta.
\end{equation*}
For every fixed $\mathcal R>0$ and integer $m\ge0$,
\begin{equation}\label{eq:G-bound}
 \max_j\|G_{j,r}\|_{C([0,\tau];C^m(B_{\mathcal R}))}
 \le C_{\rm bg}(\mathcal R,m).
\end{equation}
Furthermore,
\begin{equation*}
 \diver_y\bigl(rG_{j,r}(t,y)\bigr)=0,
\end{equation*}
because both the full difference quotient in \eqref{eq:bg-taylor} and the
linear field $M_{q_j}y$ are divergence-free.

\medskip
\noindent\emph{Velocity decomposition.}

The homogeneity $K(rz)=r^{-1}K(z)$ gives an exact identity for the self-field:
\begin{equation*}
 \frac{K*\Theta_j(t,X_{q_j}+r\,\cdot)}{ar}
 =K*\eta_{j,r}.
\end{equation*}
Write
\begin{equation}\label{eq:W-decomp}
 W_{j,r}=K*\eta_{j,r}+E_{j,r}^{\rm cross}+E_{j,r}^{\rm ext},
\end{equation}
where
\begin{align*}
 E_{j,r}^{\rm cross}(t,y)
 &=\frac1{ar}\sum_{k\ne j}
 (K*\Theta_k)(t,X_{q_j}(t)+ry),
 \\
 E_{j,r}^{\rm ext}(t,y)
 &=\frac1{ar}(K*\Theta_{\rm e})(t,X_{q_j}(t)+ry).
\end{align*}

Choose once and for all
\begin{equation}\label{eq:mathcalR-choice}
 \mathcal R>R+R_*+4.
\end{equation}
For sufficiently small $r$, every point in
$X_{q_j}(t)+rB_{\mathcal R}$ stays at a fixed positive distance from the
other packet supports and from $\supp\Theta_{\rm e}(t)$.

\begin{lemma}[Rescaled cross-packet field]
\label{lem:cross-rescaled}
For every integer $m\ge0$,
\begin{equation*}
 \max_j
 \|E_{j,r}^{\rm cross}\|_{C([0,\tau];C^m(B_{\mathcal R}))}
 \le C_{\rm cross}(m,\mathcal R)r^2.
\end{equation*}
\end{lemma}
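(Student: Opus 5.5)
The estimate follows from the far-field cancellation bound in Proposition~\ref{prop:BS-toolbox}\textup{(ii)}, applied to each packet $\Theta_k$ with $k\ne j$ and then rescaled. Fix $j$, $t\in[0,\tau]$, and $m\ge0$.

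\emph{Step 1: derivatives after rescaling.} For $y\in B_{\mathcal R}$ the chain rule gives
\[
 D_y^mE_{j,r}^{\rm cross}(t,y)=\frac{r^m}{ar}\sum_{k\ne j}(D^mK*\Theta_k)(t,X_{q_j}(t)+ry).
\]
For $r\le1$ the factor $r^m$ is harmless, so it suffices to bound $D^mK*\Theta_k$ uniformly on the set $E:=X_{q_j}(t)+r\overline{B_{\mathcal R}}$.

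\emph{Step 2: geometry.} By \eqref{eq:packet-support} we have $\supp\Theta_k(t)\subset B(X_{q_k}(t),rR_*)$. Using \eqref{eq:trajectory-separation}, for $r<d_J/(4(\mathcal R+R_*))$ we get
\[
 \dist\bigl(E,B(X_{q_k}(t),rR_*)\bigr)\ge d_J-r(\mathcal R+R_*)\ge d_J/2=:d,
\]
uniformly in $t$, $j$, and $k\ne j$. This is the only extra restriction on $r$ beyond those in Proposition~\ref{prop:macro}.

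\emph{Step 3: cancellation.} Each $\Theta_k(t)$ has zero integral, support in a ball of radius $\varrho=rR_*$ about $X_{q_k}(t)$, and $\|\Theta_k(t)\|_{L^1}=ar^2\|\zeta_0\|_{L^1}$ by \eqref{eq:packet-L1} (each term is conserved individually). The improved estimate in Proposition~\ref{prop:BS-toolbox}\textup{(ii)} therefore gives
\[
 \|D^mK*\Theta_k(t)\|_{L^\infty(E)}\le C_{m,d}\,rR_*\,ar^2\|\zeta_0\|_{L^1}=C\,ar^3.
\]
The constant $C_{m,d}$ is uniform because it depends only on bounds for $D^{m+1}K$ on $\{|z|\ge d\}$.

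\emph{Step 4: summing.} Summing over the $J-1$ indices $k\ne j$ and dividing by $ar$ gives
\[
 \|D^m_yE_{j,r}^{\rm cross}(t)\|_{L^\infty(B_{\mathcal R})}\le (J-1)C_{m,d}R_*\|\zeta_0\|_{L^1}\,r^2 .
\]
Taking the maximum over $m'\le m$, $t\in[0,\tau]$, and $j$ yields the claimed bound with
\[
 C_{\rm cross}=(J-1)\max_{m'\le m}C_{m',d_J/2}R_*\|\zeta_0\|_{L^1},
\]
which depends on the fixed objects but not on $r$. Continuity in time follows because $\Theta_k$ is smooth in $(t,x)$ for the classical solution, so $E_{j,r}^{\rm cross}$ belongs to $C([0,\tau];C^m(B_{\mathcal R}))$.

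The main point needing care is Step 2. The separation between $E$ and the other packet supports must be a fixed constant independent of $r$ and $t$, and it is the packet-support control \eqref{eq:packet-support} from Proposition~\ref{prop:macro} that supplies this. With it, the power count is: one power of $r$ from zero circulation, two from the $L^1$ mass, and a loss of one from the $ar$ normalization, giving $r^2$.
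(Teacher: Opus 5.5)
Your proposal is correct and follows essentially the same route as the paper. In both, the zero-circulation improvement in Proposition~\ref{prop:BS-toolbox}\textup{(ii)} bounds each $D_x^mK*\Theta_k$, $k\ne j$, by $O(ar^3)$ on $X_{q_j}(t)+rB_{\mathcal R}$; the factor $r^m\le1$ from composition and the division by $ar$ then give $r^2$. Your explicit separation step via $d_J$ only spells out the geometric fact the paper asserts just before the lemma and encodes in $r_{\rm geom}$.
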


\begin{proof}
For $k\ne j$, the source $\Theta_k$ has zero circulation, support radius
$R_*r$, and $L^1$ norm $ar^2\|\zeta_0\|_{L^1}$.
Proposition~\ref{prop:BS-toolbox}\textup{(ii)} gives
\begin{equation*}
 \|D_x^mK*\Theta_k\|_{L^\infty(X_{q_j}+rB_{\mathcal R})}
 \le C_m ar^3.
\end{equation*}
Since $D_y^m=r^mD_x^m$ after composition with
$x=X_{q_j}+ry$, division by $ar$ yields $C_mr^{m+2}$.  This is bounded by
$C_mr^2$ for $0<r\le1$.  Summing over finitely many $k$ proves the claim.
\end{proof}

\begin{lemma}[Rescaled external field]
\label{lem:external-rescaled}
For every integer $m\ge0$,
\begin{equation}\label{eq:Eext-bound}
 \max_j
 \|E_{j,r}^{\rm ext}\|_{C([0,\tau];C^m(B_{\mathcal R}))}
 \le C_{\rm ext,res}(m,\mathcal R)a^{-1}r^2.
\end{equation}
In particular, since $a>0$ is fixed before $r\to0$, this is an $O(r^2)$
bound.
\end{lemma}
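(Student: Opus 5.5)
The plan is to use that $v_{\rm e}=K*\Theta_{\rm e}$ is already controlled at order $r^3$ in physical variables by Proposition~\ref{prop:macro}, and then track what the rescaling costs. By \eqref{eq:ve-small} we have $\|v_{\rm e}(t)\|_{H^{s_*+1}}\le C_{\rm mac}r^3$ uniformly on $[0,\tau]$, with $s_*\ge10$. For $m\le s_*-1$, Sobolev embedding $H^{m+2}(\R^2)\hookrightarrow C^m_b$ then gives
\[
 \|D_x^m v_{\rm e}(t)\|_{L^\infty(\R^2)}\le C_m r^3 .
\]
The composition $x=X_{q_j}(t)+ry$ produces $D_y^m=r^mD_x^m$. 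So on $B_{\mathcal R}$, after division by $ar$,
\[
 \|D_y^mE^{\rm ext}_{j,r}(t)\|_{L^\infty(B_{\mathcal R})}
 \le a^{-1}r^{m-1}\,C_mr^3\le C_m a^{-1}r^2 ,\qquad 0<r\le1.
\]
This already proves \eqref{eq:Eext-bound} for all $m\le s_*-1$, and no support separation is needed in this range.

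For arbitrary $m$, and in particular beyond the Sobolev range, I would use the separation instead. By \eqref{eq:external-support-final} and the separation statement in Proposition~\ref{prop:macro}, $\supp\Theta_{\rm e}(t)\subset\Nbh_{\kappa/2}(\cZ_t)$. This set lies at distance at least $d_0/4$ from the packet tube. For small $r$ the tube contains $X_{q_j}(t)+rB_{\mathcal R}$, since $r\mathcal R\to0$. The first estimate of Proposition~\ref{prop:BS-toolbox}\textup{(ii)}, applied with $E=X_{q_j}(t)+r\overline{B_{\mathcal R}}$, $F=\overline{\Nbh_{\kappa/2}(\cZ_t)}$ and $d=d_0/4$, then gives
\[
 \|D_x^mK*\Theta_{\rm e}(t)\|_{L^\infty(E)}\le C_{m,d_0}\|\Theta_{\rm e}(t)\|_{L^1}\le C_{m,d_0}C_{\rm mac}r^3 ,
\]
using \eqref{eq:theta-e-small}. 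The constant depends only on $m$ and $d_0$, so it is uniform in $t$ and $j$. Rescaling as before yields $C\,a^{-1}r^{m+2}\le Ca^{-1}r^2$.

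The bound holds pointwise in $t$ with uniform constants. Continuity in time into $C^m(B_{\mathcal R})$ follows from the smoothness of the classical solution, so taking the maximum over the finitely many $j$ completes the argument.

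The only step needing care is the geometric check that the rescaled observation ball stays at a fixed distance from the external support. This is inherited from Proposition~\ref{prop:macro} once $r\mathcal R$ is below the fixed tube scale, and it is the reason $r$ is restricted after $\mathcal R$ has been fixed in \eqref{eq:mathcalR-choice}. The factor $a^{-1}$ comes from dividing a source of absolute size $O(r^3)$ by the microscopic velocity scale $ar$. Since $a$ is fixed before $r\to0$, this factor is harmless.
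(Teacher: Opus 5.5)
Your proof is correct. Your general-$m$ argument is exactly the paper's proof: separation of $X_{q_j}(t)+rB_{\mathcal R}$ from $\supp\Theta_{\rm e}(t)$ for small $r$, then the first estimate of Proposition~\ref{prop:BS-toolbox}\textup{(ii)} with $\|\Theta_{\rm e}\|_{L^1}\le C_{\rm mac}r^3$, then the rescaling factor $r^m/(ar)$. Your preliminary route via \eqref{eq:ve-small} and Sobolev embedding is also valid for $m\le s_*-1$ and avoids the geometric check, but it is not needed because the separation argument covers every $m$.
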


\begin{proof}
The external support is at a fixed positive distance from
$X_{q_j}+rB_{\mathcal R}$.  Proposition~\ref{prop:BS-toolbox}\textup{(ii)} and
\eqref{eq:theta-e-small} imply
\begin{equation*}
 \|D_x^mK*\Theta_{\rm e}\|_{L^\infty(X_{q_j}+rB_{\mathcal R})}
 \le C_m\|\Theta_{\rm e}\|_{L^1}
 \le C_mr^3.
\end{equation*}
After composition, $m$ derivatives in $y$ supply $r^m$; division by $ar$
gives $C_ma^{-1}r^{m+2}\le C_ma^{-1}r^2$.
\end{proof}

Set
\begin{equation*}
 E_{j,r}=E_{j,r}^{\rm cross}+E_{j,r}^{\rm ext}.
\end{equation*}
For fixed $a$, Lemmas~\ref{lem:cross-rescaled} and
\ref{lem:external-rescaled} give
\begin{equation}\label{eq:E-r2}
 \max_j\|E_{j,r}\|_{C([0,\tau];C^m(B_{\mathcal R}))}
 \le C_Er^2.
\end{equation}
The field $E_{j,r}$ is divergence-free in $y$, because it is a rescaled sum
of divergence-free velocity fields.

The total coefficient error is
\begin{equation}\label{eq:F-def}
 F_{j,r}=rG_{j,r}+aE_{j,r}.
\end{equation}
By \eqref{eq:G-bound} and \eqref{eq:E-r2},
\begin{equation*}
 \max_j\|F_{j,r}\|_{C([0,\tau];C^m(B_{\mathcal R}))}
 \le C_Fr.
\end{equation*}
Moreover,
\begin{equation*}
 \diver_yF_{j,r}=0.
\end{equation*}
Substituting \eqref{eq:bg-taylor}, \eqref{eq:W-decomp}, and
\eqref{eq:F-def} into \eqref{eq:eta-raw} gives
\begin{equation}\label{eq:eta-error}
 \partial_t\eta_{j,r}
 +\bigl(M_{q_j}(t)y+aK*\eta_{j,r}+F_{j,r}\bigr)
 \cdot\nabla\eta_{j,r}=0,
 \qquad
 \eta_{j,r}(0)=-\Delta f.
\end{equation}

\begin{remark}[Error hierarchy]
The complete hierarchy in the rescaled velocity is
\[
 \underbrace{rG_{j,r}}_{O(r)}
 \quad+\quad
 \underbrace{aE_{j,r}^{\rm cross}}_{O(ar^2)}
 \quad+\quad
 \underbrace{aE_{j,r}^{\rm ext}}_{O(r^2)}.
\]
Thus the moving-packet approximation is limited by the first-order Taylor
remainder of the background, not by nonlocal packet interactions.
\end{remark}

The following elementary extension will allow us to apply global Sobolev
estimates to the locally defined error field $F_{j,r}$.

\begin{lemma}[Divergence-free extension from a ball]
\label{lem:df-extension}
Let $0<R_1<R_2$ and let
$E\in C^\infty(B_{R_2};\R^2)$ satisfy $\diver E=0$.  For every integer
$m\ge0$, there is a linear extension operator producing
$\widetilde E\in C_c^\infty(\R^2;\R^2)$ such that
\begin{equation*}
 \diver\widetilde E=0,
 \qquad
 \widetilde E=E\quad\text{on }B_{R_1},
 \qquad
 \|\widetilde E\|_{C^m}
 \le C_{R_1,R_2,m}\|E\|_{C^m(B_{R_2})}.
\end{equation*}
The construction applies uniformly to smooth time-dependent families.
\end{lemma}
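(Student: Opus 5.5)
The idea is to reduce to a stream function and cut that off. Since $B_{R_2}$ is simply connected and $\diver E=0$, there is a scalar $\psi\in C^\infty(B_{R_2})$ with $E=\nabla^\perp\psi$. With our convention $\nabla^\perp=(\partial_2,-\partial_1)$, the condition $\partial_1E_1+\partial_2E_2=0$ is exactly closedness of the form $-E_2\,dx_1+E_1\,dx_2$. We then set
\[
 \widetilde E=\nabla^\perp(\chi\psi),
\]
where $\chi\in C_c^\infty(B_{R_2})$ equals $1$ on $B_{R_1}$. This choice makes the two qualitative requirements automatic. The extension is divergence-free because it is a perpendicular gradient, it has compact support in $B_{R_2}$, and it equals $E$ on $B_{R_1}$.

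The only real issue is choosing $\psi$ so that the construction is linear and satisfies the $C^m$ bound. Two issues arise:
\begin{enumerate}[label=\textup{(\arabic*)},leftmargin=2.4em]
 \item $\psi$ is determined only up to a constant;
 \item $\psi$ must be controlled by $E$ in $C^m$ without any loss at the boundary of the ball.
\end{enumerate}
For the first, we normalize by $\psi(0)=0$. For the second, we work on a slightly smaller ball. Choose $R_1<R_1'<R_2$, take $\chi$ supported in $B_{R_1'}$, and define $\psi$ by the radial line integral
\[
 \psi(x)=\int_0^1 x\cdot\bigl(-E_2(\theta x),\,E_1(\theta x)\bigr)\,\dd\theta .
\]
This is the Poincar\'e homotopy formula on a star-shaped domain, so it gives $\nabla^\perp\psi=E$ on $B_{R_2}$ by the closedness of the form. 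Moreover, $\psi$ is linear in $E$.

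Next we estimate $\psi$. Differentiating under the integral gives, for $k\le m$ on $\overline{B_{R_1'}}$,
\[
 |D^k\psi|\le C_{R_1',k}\|E\|_{C^{\max(k-1,0)}(B_{R_2})}.
\]
Indeed, $D^k$ of $x\mapsto x\cdot V(\theta x)$ involves at most $k$ derivatives of $V$, weighted by powers $\theta^{j}\le1$, together with polynomial factors in $x$ that are bounded on the ball. In fact $D\psi$ is expressed through $E$ itself. Then
\[
 D^m\widetilde E=D^m\nabla^\perp(\chi\psi),
\]
and the Leibniz rule bounds this by $C\sum_{k\le m+1}\|D^k\psi\|_{L^\infty(B_{R_1'})}$. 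Since $\nabla\psi$ is a rotation of $E$, the terms with $k\ge1$ are bounded by $\|E\|_{C^m}$, and the term $k=0$ is bounded by $R_2\|E\|_{L^\infty}$. This gives
\[
 \|\widetilde E\|_{C^m}\le C_{R_1,R_2,m}\|E\|_{C^m(B_{R_2})},
\]
where the constant depends only on $\chi$ and on the radii.

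The time-dependent statement is immediate. The map $E\mapsto\widetilde E$ is linear and given by a pointwise formula (an integral along rays), so applying it for each $t$ preserves smoothness and continuity in $t$, and the bounds are uniform in $t$. The one delicate point is the normalization of the constant in $\psi$: without the condition $\psi(0)=0$ the $L^\infty$ bound on $\psi$, which enters through the derivatives falling on $\chi$, would fail. The radial formula handles this linearly.
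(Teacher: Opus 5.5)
Your proof is correct and follows the paper's route: a stream function on the ball, normalized by $\psi(0)=0$ and obtained by radial integration, then $\widetilde E=\nabla^\perp(\chi\psi)$ with $\chi\in C_c^\infty(B_{R_2})$ equal to one on $B_{R_1}$. You also make explicit two estimates the paper leaves implicit: the identity $\nabla\psi=(-E_2,E_1)$ controls all derivatives of $\psi$ of positive order, and $|\psi(x)|\le R_2\|E\|_{L^\infty}$ controls the zeroth-order term produced when derivatives fall on $\chi$.
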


\begin{proof}
Since $B_{R_2}$ is simply connected, there exists a stream function
$\psi$, normalized by $\psi(0)=0$, with $E=\nabla^\perp\psi$.  Integration
along radial segments controls $\psi$ in $C^{m+1}$ by $E$ in $C^m$.
Choose $\chi\in C_c^\infty(B_{R_2})$ equal to one on $B_{R_1}$ and set
$\widetilde E=\nabla^\perp(\chi\psi)$.
\end{proof}

\subsection{Quantitative moving-packet decoupling}
\label{sec:decoupling}

We compare \eqref{eq:eta-error} with the nonlinear affine profile
\eqref{eq:nonlinear-profile} corresponding to the same label $q_j$ and
amplitude $a$.

\begin{proposition}[Moving-packet approximation with $O(r)$ error]
\label{prop:decoupling}
Fix
$f,a,J,\tau,R$, the labels $q_1,\ldots,q_J$, and an integer $m\ge0$.
There exist $C_{\rm dec}<\infty$ and $r_{\rm dec}>0$, depending on these
fixed quantities and on the background but not on $r$, such that
\begin{equation}\label{eq:decoupling}
 \max_{1\le j\le J}
 \left\|
  \frac{v(t,X_{q_j}(t)+r\,\cdot)}{ar}
  -w_{q_j}^a(t,\cdot)
 \right\|_{C([0,\tau];C^m(B_R))}
 \le C_{\rm dec}r
\end{equation}
for $0<r<r_{\rm dec}$.  In particular,
\begin{equation}\label{eq:D3-decoupling}
 \max_j
 \left\|
  r^2D_x^3v(\tau,X_{q_j}(\tau)+r\,\cdot)
  -aD_y^3w_{q_j}^a(\tau)
 \right\|_{L^1(B_R)}
 \le C_{\rm dec}^{(3)}ar.
\end{equation}
\end{proposition}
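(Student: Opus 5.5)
We compare the exact rescaled equation \eqref{eq:eta-error} with the profile system \eqref{eq:nonlinear-profile} for the same label $q_j$ and amplitude $a$. Both have initial datum $-\Delta f$, the same linear drift $M_{q_j}(t)y$, and the same self-interaction $aK*\cdot$. They differ only by the divergence-free coefficient $F_{j,r}$, which is $O(r)$ in every $C^m(B_{\mathcal R})$.

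Set $\delta_j=\eta_{j,r}-\zeta_{q_j}^a$. It solves
\[
 \partial_t\delta_j+\bigl(M_{q_j}y+aK*\eta_{j,r}+F_{j,r}\bigr)\cdot\nabla\delta_j
 =-a(K*\delta_j)\cdot\nabla\zeta_{q_j}^a-F_{j,r}\cdot\nabla\zeta_{q_j}^a,
 \qquad \delta_j(0)=0.
\]
Both $\eta_{j,r}$ and $\zeta^a_{q_j}$ are supported in a fixed ball: the former in $B_{R_*}$ by \eqref{eq:packet-support}, the latter in $B_{R_{\rm prof}}$ by Lemma~\ref{lem:profile-smooth-bounds}. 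We may therefore enlarge $\mathcal R$ beyond both radii. Only the values of $F_{j,r}$ on $B_{\mathcal R}$ enter the equation, so we replace $F_{j,r}$ by its divergence-free extension $\widetilde F_{j,r}$ from Lemma~\ref{lem:df-extension}. This extension satisfies $\|\widetilde F_{j,r}\|_{C^{s+1}}\le Cr$ and leaves both solutions unchanged.

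We then run an $H^s$ energy estimate for $\delta_j$ with $s\ge m+4$. The drift is divergence-free, and the commutator terms are controlled by $\|\nabla(M y+aK*\eta_{j,r}+\widetilde F)\|_{C^{s}}$ on the support. This requires a bootstrap bound on $\|\eta_{j,r}\|_{H^{s}}$, which follows from $\|\eta_{j,r}\|_{H^s}\le C_{\rm prof}+\|\delta_j\|_{H^s}$. For the coupling term we use zero mean of $\delta_j$, which holds because both components have zero circulation, together with compact support. Proposition~\ref{prop:BS-toolbox}(iii) then gives $\|K*\delta_j\|_{H^{s+1}}\le C(\|\delta_j\|_{H^s}+\|\delta_j\|_{L^1})$, and $\|\delta_j\|_{L^1}\le C\|\delta_j\|_{L^2}$ on the fixed support. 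The forcing is bounded by $\|\widetilde F\|_{C^s}\|\zeta^a_{q_j}\|_{H^{s+1}}\le Cr$, where we use Lemma~\ref{lem:profile-smooth-bounds} with $s+1$. Gronwall on the bootstrap interval where $\|\delta_j\|_{H^s}\le1$ gives $\sup_{t\le\tau}\|\delta_j\|_{H^s}\le Cr$, and the bootstrap closes for small $r$. The main obstacle is this step: making the bootstrap non-circular, and keeping all constants independent of $r$ while the equation for $\eta$ contains the locally defined coefficient $F$. The extension lemma and the uniform profile bounds are what make this work.

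Finally we reconstruct the velocity through \eqref{eq:W-decomp}:
\[
 W_{j,r}-w_{q_j}^a=K*\delta_j+E_{j,r}.
\]
Here $\|E_{j,r}\|_{C^m(B_{\mathcal R})}\le C_Er^2$ by \eqref{eq:E-r2}. For the first term, Sobolev embedding gives $\|K*\delta_j\|_{C^m}\le C\|K*\delta_j\|_{H^{s+1}}\le Cr$ once $s+1>m+1$. This proves \eqref{eq:decoupling}.

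For \eqref{eq:D3-decoupling}, note that $D_y^3W_{j,r}(\tau,y)=r^3(ar)^{-1}D_x^3v(\tau,X_{q_j}+ry)$. Hence
\[
 r^2D_x^3v(\tau,X_{q_j}(\tau)+r\,\cdot)-aD_y^3w^a_{q_j}(\tau)=a\bigl(D^3_yW_{j,r}-D^3_yw^a_{q_j}\bigr)(\tau).
\]
Taking $m=3$ and integrating over the bounded ball $B_R$ gives the bound $C^{(3)}_{\rm dec}ar$.
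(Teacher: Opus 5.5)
Your proposal is correct and follows essentially the same route as the paper. Both arguments extend $F_{j,r}$ divergence-free via Lemma~\ref{lem:df-extension}. Both then run a bootstrapped high-order energy estimate for $\eta_{j,r}-\zeta^a_{q_j}$, using zero mass, common compact support and Proposition~\ref{prop:BS-toolbox}\textup{(iii)}, and close with Gronwall at size $O(r)$. Finally, both use the decomposition $W_{j,r}-w^a_{q_j}=K*(\eta_{j,r}-\zeta^a_{q_j})+E_{j,r}$ together with the scaling identity $r^2D_x^3v=aD_y^3W_{j,r}$. The only differences are cosmetic: the paper closes at order $s-1$ with profile bounds in $H^s$, while you close at order $s$ with profile bounds in $H^{s+1}$; and enlarging $\mathcal R$ beyond $R_{\rm prof}$ is harmless but not needed.

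There is one slip. $H^s$ control of $\eta_{j,r}$ does not bound $\nabla K*\eta_{j,r}$ in $C^s$. What the commutator actually needs is a Kato--Ponce-type bound in terms of $\|\nabla K*\eta_{j,r}\|_{L^\infty}$ and $\|K*\eta_{j,r}\|_{H^{s}}$. Both of these are controlled by $\|K*\eta_{j,r}\|_{H^{s+1}}$, which your bootstrap does supply, exactly as in the Sobolev-product version of Lemma~\ref{lem:transport-Hs}.
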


\begin{proof}
Choose $s\ge m+7$.  The field $F_{j,r}$ is defined on
$B_{\mathcal R}$ and is divergence-free.  Choose radii
\[
 R_*+1<R_1<R_2<\mathcal R
\]
with $B_R\subset B_{R_1}$ if necessary by increasing $\mathcal R$.
Lemma~\ref{lem:df-extension} gives a global compactly supported
divergence-free extension, still denoted $F_{j,r}$, satisfying
\begin{equation}\label{eq:F-global-bound}
 \max_j\|F_{j,r}\|_{C([0,\tau];C^s(\R^2))}
 \le C_Fr.
\end{equation}
Because $\eta_{j,r}$ is supported in $B_{R_*}$, this extension does not
alter its equation.

Let
\begin{equation*}
 d_{j,r}=\eta_{j,r}-\zeta_{q_j}^a.
\end{equation*}
Subtracting \eqref{eq:nonlinear-profile} from \eqref{eq:eta-error}, we find
\begin{align}
 \partial_td_{j,r}
 &+\bigl(M_{q_j}y+aK*\eta_{j,r}+F_{j,r}\bigr)
   \cdot\nabla d_{j,r}\notag\\
 &=-\bigl(aK*d_{j,r}+F_{j,r}\bigr)
   \cdot\nabla\zeta_{q_j}^a,
 \qquad d_{j,r}(0)=0.
 \label{eq:d-equation}
\end{align}
Both $\eta_{j,r}$ and $\zeta_{q_j}^a$ have zero mass and support in one
fixed ball; hence so does $d_{j,r}$.  By
Proposition~\ref{prop:BS-toolbox}\textup{(iii)},
\begin{equation}\label{eq:Kd-estimate}
 \|K*d_{j,r}\|_{H^s}
 \le C\bigl(\|d_{j,r}\|_{H^{s-1}}+\|d_{j,r}\|_{L^1}\bigr)
 \le C\|d_{j,r}\|_{H^{s-1}},
\end{equation}
where the last inequality uses the common compact support.

Let $T_{j,r}$ be the maximal time on which
\begin{equation}\label{eq:d-bootstrap}
 \|d_{j,r}(t)\|_{H^{s-1}}\le1.
\end{equation}
On this interval,
$\eta_{j,r}=\zeta_{q_j}^a+d_{j,r}$ is controlled in $H^{s-1}$ by a fixed
constant.  The linear drift $M_{q_j}(t)y$ has constant spatial gradient,
and all profile norms are fixed by
Lemma~\ref{lem:profile-smooth-bounds}.  Applying the differentiated transport estimate to
\eqref{eq:d-equation}, using \eqref{eq:Kd-estimate} and
\eqref{eq:F-global-bound}, first gives the squared inequality
\begin{align*}
 \frac12\frac{\dd}{\dd t}\|d_{j,r}\|_{H^{s-1}}^2
 &\le C_d\|d_{j,r}\|_{H^{s-1}}^2
 +C_d\|F_{j,r}\|_{C^s}\|d_{j,r}\|_{H^{s-1}}.
\end{align*}
Here the coefficient $C_d$ is finite on the bootstrap interval because
$\eta_{j,r}=\zeta_{q_j}^a+d_{j,r}$, the profile is uniformly smooth, and
$\|d_{j,r}\|_{H^{s-1}}\le1$.  Regularizing the norm at zero and using
\eqref{eq:F-global-bound} yields
\begin{equation*}
 \frac{\dd}{\dd t}\|d_{j,r}(t)\|_{H^{s-1}}
 \le C_d\|d_{j,r}(t)\|_{H^{s-1}}+C_dC_Fr.
\end{equation*}
The constant $C_d$ depends on $f,a,\tau,s$ and the compact label set, but is
independent of $r$ and $j$.  Gronwall's inequality gives
\begin{equation*}
 \max_j\|d_{j,r}\|_{C([0,T_{j,r}];H^{s-1})}
 \le C_*r.
\end{equation*}
Taking $r<(2C_*)^{-1}$ strictly improves the bootstrap threshold in
\eqref{eq:d-bootstrap}; hence $T_{j,r}=\tau$ for every $j$.

Proposition~\ref{prop:BS-toolbox}\textup{(iii)} and Sobolev embedding, with
$s\ge m+7$, give
\begin{equation}\label{eq:Kd-Cm}
 \max_j\|K*d_{j,r}\|_{C([0,\tau];C^m(B_R))}
 \le C_mr.
\end{equation}
By \eqref{eq:W-decomp},
\[
 W_{j,r}-w_{q_j}^a=K*d_{j,r}+E_{j,r},
\]
and \eqref{eq:E-r2} is smaller than the right-hand side of
\eqref{eq:Kd-Cm}.  This proves \eqref{eq:decoupling}.

Finally,
\[
 v(t,X_{q_j}(t)+ry)=arW_{j,r}(t,y),
\]
so
\[
 r^2D_x^3v(t,X_{q_j}(t)+ry)=aD_y^3W_{j,r}(t,y).
\]
Integrating the $C^3$ estimate over $B_R$ proves
\eqref{eq:D3-decoupling}.
\end{proof}

\subsection{Parameter dependence and admissible packet scales}
\label{sec:parameters}

The order of choices is part of the proof.  This section records all
allowed dependencies and produces one final admissible upper bound for $r$.
Fix a target triple
\begin{equation*}
 0<\tau\le\tau_0,
 \qquad \delta>0,
 \qquad M>0.
\end{equation*}

The dependencies are summarized in the following table.

\begin{longtable}{@{}p{0.16\textwidth}p{0.31\textwidth}p{0.43\textwidth}@{}}
\toprule
Quantity & Allowed dependence & Purpose \\
\midrule
\endfirsthead
\toprule
Quantity & Allowed dependence & Purpose \\
\midrule
\endhead
$\eps$ & $\delta,C_0$ & total top-order size of the perturbation \\
$Q$ & $M,\eps$ & required local passive output \\
$f,R,h$ & $\tau,Q$, background & bad compact profile, observation radius, uniform label ball \\
$a_*$ & $f,R,h,\tau$, background & nonlinear affine profile remains close to passive profile \\
$J$ & $\eps,a_*$ & makes $a=\eps/J<a_*$ \\
$q_1,\ldots,q_J$ & $J,h$ & distinct labels in the zero-vorticity ball \\
$d_J$ & labels, background, $\tau$ & minimum separation of background trajectories \\
$R_*,C_{\rm mac},r_{\rm mac}$ & $f,a,J,q_j,\tau$, background & support propagation and $O(r^3)$ response \\
$C_E,C_F$ & all preceding fixed quantities, $\mathcal R,s$ & rescaled cross/external and total coefficient errors \\
$C_{\rm dec},r_{\rm dec}$ & all preceding fixed quantities, $R$ & $O(r)$ moving-packet approximation \\
$r$ & all preceding quantities & final microscopic scale \\
\bottomrule
\caption{Allowed dependence in the multi-packet construction.}
\label{tab:dependencies}
\end{longtable}

No quantity chosen before $r$ is required to be bounded uniformly as the
nonmultiplier gain $Q$ tends to infinity.  For each final initial datum, all
such quantities are finite and fixed.

We now combine these dependencies into one admissible upper bound for the
packet scale.

Let $R_0$ be the support radius of $f$, let $R_*$ be furnished by
Proposition~\ref{prop:macro}, and let $\mathcal R$ be as in
\eqref{eq:mathcalR-choice}.  Define the geometric threshold
\begin{equation}\label{eq:r-geom}
 r_{\rm geom}
 =\min\left\{
 \frac{\rho}{2(R_0+1)},
 \frac{\kappa}{2(R_*+\mathcal R+1)},
 \frac{d_J}{8(R_*+\mathcal R+1)}
 \right\}.
\end{equation}
The first term places the initial packets in the inner material region; the
second preserves separation from the background-vorticity tube; the third
keeps packet and observation tubes disjoint.

Let $r_{\rm cost}>0$ be such that
\begin{equation*}
 C_fJa(r+r^2+r^3)\le C_0\eps
 \qquad(0<r<r_{\rm cost}).
\end{equation*}
Let $r_{\rm out}>0$ be such that the error in
\eqref{eq:D3-decoupling} is at most $aQ/4$:
\begin{equation*}
 C_{\rm dec}^{(3)}ar\le\frac{aQ}{4}
 \qquad(0<r<r_{\rm out}).
\end{equation*}
The amplitude $a$ cancels in this condition.  Finally, define
\begin{equation*}
 r_{\rm adm}
 =\min\left\{
 1,r_{\rm geom},r_{\rm cost},r_{\rm mac},r_{\rm dec},r_{\rm out}
 \right\}.
\end{equation*}
Every estimate in the construction holds simultaneously whenever
$0<r<r_{\rm adm}$.

\begin{remark}[Why $a^{-1}$ is harmless]
The rescaled external field contains a factor $a^{-1}$ in
\eqref{eq:Eext-bound}.  This causes no loss because the order of choices is
$a$ first and $r$ last.  Once $a>0$ is fixed, the coefficient
$a^{-1}r^2$ still tends to zero faster than the $O(r)$ Taylor error.
No estimate is claimed uniformly as $a\downarrow0$ and $r\downarrow0$
simultaneously.
\end{remark}

\begin{proof}[Proof of Proposition~\ref{prop:packet-realization}]
Fix all parameters in the statement.  Proposition~\ref{prop:macro} gives
uniform packet localization and separation for $0<r<r_{\rm mac}$.
Proposition~\ref{prop:decoupling}, applied with $m=3$, gives
\eqref{eq:realization-D3} with
$C_{\rm real}=C_{\rm dec}^{(3)}$.  Taking $r_{\rm real}$ below
$r_{\rm mac}$, $r_{\rm dec}$, and the geometric threshold
\eqref{eq:r-geom} makes the sets \eqref{eq:realization-sets} pairwise
disjoint and proves the proposition.
\end{proof}

\section{Norm inflation}
\label{sec:local-proof}

The nonlinear packet construction is first proved relative to one compactly
supported background.

\subsection{Inflation near the background}

\begin{theorem}[Local unboundedness near a compact background]
\label{thm:background}
There exist a divergence-free
$\bar u_0\in C_c^\infty(\R^2;\R^2)$ and $\tau_0>0$ such that, writing
$\bar u(t)=\cS_t^\infty\bar u_0$, the following holds.  For every
$0<\tau\le\tau_0$ and every $\delta,M>0$, there exists
$U_0\in C_c^\infty(\R^2;\R^2)$ with $\diver U_0=0$ such that
\begin{equation*}
 \|U_0-\bar u_0\|_{W^{3,1}}<\delta
\end{equation*}
and, more strongly,
\begin{equation*}
 \|\cS_\tau^\infty U_0-\bar u(\tau)\|_{\dot W^{3,1}}>M.
\end{equation*}
Hence the same lower bound holds for the full $W^{3,1}$ norm.
Moreover, the perturbation $U_0-\bar u_0$ is a finite sum of disjoint
compactly supported velocity packets.
\end{theorem}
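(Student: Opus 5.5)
We take $\bar u_0=\nabla^\perp(\chi x_1x_2)$ from Section~\ref{sec:background} and $\tau_0$ from Proposition~\ref{prop:background}. Fix $0<\tau\le\tau_0$ and $\delta,M>0$, and make the choices in the order of Table~\ref{tab:dependencies}.

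\textbf{Parameters.} Set $\eps=\delta/(4C_0)$, with $C_0$ from Lemma~\ref{lem:initial-cost}, and $Q=4(M+1)/\eps$. Proposition~\ref{prop:uniform-passive}, applied to $A_q=B_q(\tau)$, then gives $f$, $R$ and a label ball $B_h$ on which \eqref{eq:uniform-passive} holds. Proposition~\ref{prop:small-a} then gives $a_*$ with \eqref{eq:nonlinear-output}. Choose $J$ with $a=\eps/J<a_*$ and distinct labels $q_1,\dots,q_J\in B_h$. Finally take $0<r<r_{\rm adm}$, and set $U_0=U_0^{J,r}$ as in \eqref{eq:packet-stream}. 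This is smooth, compactly supported and divergence-free. The perturbation $U_0-\bar u_0=\sum_j ar\,(\nabla^\perp f)((x-q_j)/r)$ is a finite sum of disjointly supported packets. By Lemma~\ref{lem:initial-cost} and the choice of $r_{\rm cost}$,
\[
\|U_0-\bar u_0\|_{W^{3,1}}\le 2C_0\eps=\delta/2<\delta .
\]

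\textbf{Output lower bound.} We have $\cS^\infty_\tau U_0-\bar u(\tau)=v(\tau)$. The observation sets $\Omega_j$ of \eqref{eq:realization-sets} are pairwise disjoint, so
\[
\|v(\tau)\|_{\dot W^{3,1}}\ge c\sum_j\int_{\Omega_j}|D^3v(\tau)|\,\dd x ,
\]
where $c$ only accounts for the fixed tensor norm. Change variables $x=X_{q_j}(\tau)+ry$, with $\dd x=r^2\dd y$:
\[
\int_{\Omega_j}|D_x^3v|\,\dd x=\int_{B_R}\bigl|r^2D_x^3v(\tau,X_{q_j}(\tau)+ry)\bigr|\,\dd y .
\]
By \eqref{eq:realization-D3}, \eqref{eq:nonlinear-output} and $r<r_{\rm out}$, this is at least $a\tfrac34Q-\tfrac14aQ=\tfrac12aQ$. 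Summing over $j$ gives
\[
\|v(\tau)\|_{\dot W^{3,1}}\ge \tfrac12 JaQ=\tfrac12\eps Q>M,
\]
up to the harmless constant $c$, which can be absorbed into $Q$ at the start. Here $v(\tau)\in W^{3,1}$ by Lemma~\ref{lem:moments}, so the norm is finite. The full $W^{3,1}$ norm dominates the homogeneous one, which gives the last claim.

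\textbf{Main obstacle.} Almost everything is already packaged in Proposition~\ref{prop:packet-realization}. The only delicate point is checking that the quantifier order is non-circular. $Q$ depends only on $\delta$ and $M$. Nothing chosen before $r$ depends on $r$. $C_{\rm real}$ may depend badly on $f$, $a$ and $J$, but it is defeated by sending $r\to0$ last. The remaining point is keeping the tensor-norm constant $c$ in the lower bound consistent with the one used in \eqref{eq:uniform-passive}; if they differ, enlarge $Q$ by the fixed comparison constant.
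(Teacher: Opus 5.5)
Your proof is correct and follows the paper's argument essentially line by line: the same parameter order $\eps\to Q\to(f,R,h)\to a_*\to J\to r$, the initial-cost bound $2C_0\eps<\delta$ from Lemma~\ref{lem:initial-cost}, and the change of variables on the disjoint sets $\Omega_j$, which combines \eqref{eq:realization-D3} with \eqref{eq:nonlinear-output} to give $\tfrac12 aQ$ per packet and $\tfrac12\eps Q>M$ in total. Your remark about the tensor-norm comparison constant $c$ is extra care that the paper leaves implicit, and taking $Q=4(M+1)/(c\eps)$ from the start handles it.
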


\begin{proof}[Proof of Theorem~\ref{thm:background}]
Fix $0<\tau\le\tau_0$ and $\delta,M>0$.
Choose
\begin{equation*}
 0<\eps<\min\left\{1,\frac{\delta}{4C_0}\right\},
\end{equation*}
where $C_0$ is from Lemma~\ref{lem:initial-cost}, and then choose
\begin{equation*}
 Q>\frac{8M}{\eps}.
\end{equation*}
Proposition~\ref{prop:uniform-passive} gives a compactly supported real
profile $f$, an observation radius $R$, and a label radius $h$ satisfying
\eqref{eq:uniform-passive}.  Proposition~\ref{prop:small-a} then gives
$a_*>0$ and the nonlinear output bound \eqref{eq:nonlinear-output}.

Choose $J$ so large that
\begin{equation*}
 a=\frac{\eps}{J}<a_*.
\end{equation*}
Select distinct labels $q_1,\ldots,q_J\in B_h$.  Once these labels have
been fixed, $d_J>0$ in \eqref{eq:trajectory-separation} is fixed.

Apply Proposition~\ref{prop:packet-realization} and take $r>0$ small enough
that its conclusions hold,
\begin{equation}\label{eq:realization-output-choice}
 C_{\rm real}r\le\frac Q4,
\end{equation}
and the lower-order term in \eqref{eq:initial-cost} is at most $C_0\eps$.
Lemma~\ref{lem:initial-cost} then gives
\[
 \|U_0^{J,r}-\bar u_0\|_{W^{3,1}}
 \le 2C_0\eps<\delta.
\]

Changing variables on each of the disjoint sets $\Omega_j$ in
\eqref{eq:realization-sets} and using \eqref{eq:nonlinear-output},
\eqref{eq:realization-D3}, and \eqref{eq:realization-output-choice}, we obtain
\begin{align*}
 \int_{\Omega_j}|D_x^3v(\tau,x)|\,\dd x
 &\ge a\int_{B_R}|D^3w_{q_j}^a(\tau,y)|\,\dd y
      -C_{\rm real}ar\\
 &\ge\frac{aQ}{2}.
\end{align*}
Consequently,
\begin{equation*}
 \|\cS_\tau^\infty U_0^{J,r}-\bar u(\tau)\|_{\dot W^{3,1}}
 \ge\sum_{j=1}^J\frac{aQ}{2}
 =\frac{\eps Q}{2}>M.
\end{equation*}
This proves the theorem.
\end{proof}

\subsection{Scaling to the zero solution}
\label{sec:scaling}

The Euler equations possess the amplitude--time symmetry
\begin{equation*}
 u^\lambda(t,x)=\lambda u(\lambda t,x).
\end{equation*}
Thus, for smooth data,
\begin{equation}\label{eq:solution-map-scaling}
 \cS_T^\infty(\lambda u_0)
 =\lambda\cS_{\lambda T}^\infty u_0.
\end{equation}
No spatial rescaling is involved, so the $W^{3,1}$ norm is multiplied
exactly by $\lambda$.

\begin{proof}[Proof of Theorem~\ref{thm:zero}]
Fix $T,\delta,M>0$.  Put
\[
 B_0=\|\bar u_0\|_{W^{3,1}}+1.
\]
Choose $0<\tau\le\tau_0$ so small that, with
\begin{equation*}
 \lambda=\frac{\tau}{T},
\end{equation*}
one has
\begin{equation}\label{eq:lambda-small-data}
 \lambda B_0<\delta.
\end{equation}
Apply Theorem~\ref{thm:background} at time $\tau$ with initial radius $1$
and output target
\begin{equation*}
 M_\lambda=\frac{2M}{\lambda}
 +2\|\bar u(\tau)\|_{\dot W^{3,1}}+1.
\end{equation*}
We obtain a compactly supported smooth divergence-free $U_0$ such that
\[
 \|U_0-\bar u_0\|_{W^{3,1}}<1,
 \qquad
 \|\cS_\tau^\infty U_0-\bar u(\tau)\|_{\dot W^{3,1}}>M_\lambda.
\]
Set
\begin{equation*}
 u_0=\lambda U_0.
\end{equation*}
Then \eqref{eq:lambda-small-data} gives
\[
 \|u_0\|_{W^{3,1}}\le\lambda B_0<\delta.
\]
By \eqref{eq:solution-map-scaling},
\begin{align*}
 \|\cS_T^\infty u_0\|_{\dot W^{3,1}}
 &=\lambda\|\cS_\tau^\infty U_0\|_{\dot W^{3,1}}\\
 &\ge\lambda
 \left(
 \|\cS_\tau^\infty U_0-\bar u(\tau)\|_{\dot W^{3,1}}
 -\|\bar u(\tau)\|_{\dot W^{3,1}}
 \right)>M.
\end{align*}
This proves the theorem.
\end{proof}

In particular, for every $T>0$ there is no $W^{3,1}$ neighborhood
$\mathcal U$ of zero and no continuous map
\[
 \widetilde\cS_T:\mathcal U\longrightarrow W^{3,1}(\R^2)
\]
that agrees with $\cS_T^\infty$ on compactly supported smooth
divergence-free data: such a map would be locally bounded at zero,
contradicting \eqref{eq:local-unbounded-zero}.

\section{Localized insertion and compactly supported condensation}
\label{sec:compact-insertion}

The preceding construction used one fixed hyperbolic background.  We now
show that the same finite packet mechanism can be created inside any ball
on which a prescribed smooth datum vanishes.  The time is chosen after the
datum and the ball.  This flexibility is what permits all stages of the
condensation to remain in one fixed compact set and leads directly to
Theorem~\ref{thm:compact-main}.

\subsection{Short-time insertion into an empty ball}

\begin{proposition}[Short-time insertion into an empty ball]
\label{prop:local-insertion}
Let $g_0\in C_{c,\sigma}^\infty(\R^2)$, and let $B$ be an open ball such
that
\begin{equation*}
 \overline B\cap\supp g_0=\varnothing.
\end{equation*}
For every $\eta,L,T_*>0$, there exist $0<T<T_*$ and
$h_0\in C_{c,\sigma}^\infty(B)$ satisfying
\begin{equation}\label{eq:local-insertion-conclusion}
 \|h_0\|_{W^{3,1}}<\eta,
 \qquad
 \|\cS_T^\infty(g_0+h_0)\|_{\dot W^{3,1}}>L.
\end{equation}
\end{proposition}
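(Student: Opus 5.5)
The plan is to reproduce the construction of Theorem~\ref{thm:background} inside $B$, with $g_0$ playing the role of a far background. Let $x_0$ be the center of $B$, choose $\ell>0$ with $B(x_0,9\ell\rho')\subset B$, where $\rho'$ is a radius scale, and let
\[
 k_0=\lambda\,\nabla^\perp\bigl(\chi((x-x_0)/\ell)(x-x_0)_1(x-x_0)_2\bigr),
\]
with $\chi$ as in Section~\ref{sec:background}. This is a translated and spatially rescaled copy of the hyperbolic background. The amplitude $\lambda$ is chosen small enough that $\|k_0\|_{W^{3,1}}<\eta/2$, which is possible because the norm is linear in $\lambda$. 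Then apply the whole argument of Sections~\ref{sec:functional}--\ref{sec:local-proof} to the combined background
\[
 \bar u_0:=g_0+k_0\in C^\infty_{c,\sigma}.
\]
This field vanishes on a neighborhood of $\overline B\setminus\supp k_0$, and it equals the linear strain $\lambda M(x-x_0)$ on the ball $B(x_0,4\ell\rho')$. There its vorticity is zero, and $\supp\nabla\bar\omega_0$ is a compact set at positive distance from $\overline{B(x_0,2\ell\rho')}$.

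Proposition~\ref{prop:background} only used these three features: zero vorticity on an inner ball, positive separation from $\supp\nabla\bar\omega_0$, and $\nabla\bar u_0(x_0)=\lambda M$ nonconformal. Hence it holds for the combined background with some $\tau_0'>0$. Since $\dot B_{x_0}(0)=\lambda M$, we get $B_{x_0}(t)^TB_{x_0}(t)=I+2\lambda tM+O(t^2)$, so $B_{x_0}(t)$ is nonconformal for all small $t>0$. Fix $T<\min(T_*,\tau_0')$. The packet machinery of Sections~\ref{sec:affine}--\ref{sec:packets} is written for an arbitrary smooth compactly supported background with these properties. Theorem~\ref{thm:background}, applied at time $T$ with $\delta=\eta/2$ and with target $M=L+\|\bar u(T)\|_{\dot W^{3,1}}$, then produces finitely many packets $p_0$, supported in $B(x_0,2\ell\rho')\subset B$, with $\|p_0\|_{W^{3,1}}<\eta/2$ and $\|\cS_T^\infty(\bar u_0+p_0)-\bar u(T)\|_{\dot W^{3,1}}>M$. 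The triangle inequality gives $\|\cS_T^\infty(g_0+h_0)\|_{\dot W^{3,1}}>L$ for
\[
 h_0:=k_0+p_0\in C^\infty_{c,\sigma}(B),
\]
and $\|h_0\|_{W^{3,1}}<\eta$.

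The main obstacle is checking that nothing in Sections~\ref{sec:functional}--\ref{sec:packets} secretly used the specific background $\chi x_1x_2$ beyond the listed properties. This includes the macroscopic bootstrap, which only needs $\cZ_t$ compact and separated from $\cP_t$ by Proposition~\ref{prop:background}(ii), and Lemma~\ref{lem:localized-response}, which is stated for general $\bar u,\bar\omega$. I would verify this by rereading each constant's dependence in Table~\ref{tab:dependencies}. The smallness of $\lambda$ is harmless, since it only shrinks the hyperbolic rate and therefore $\tau_0'$, which is allowed because $T$ is chosen after $\lambda$.

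A secondary point is that the packet labels must lie in $B_h(x_0)$ with all packet supports inside $B$. This holds because the packets sit inside $B(x_0,2\ell\rho')$ for small $r$.
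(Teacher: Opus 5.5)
Your proposal is correct and follows the paper's proof essentially step for step. You add a small rescaled hyperbolic cell inside $B$ (the paper's $\beta b_{z,s}$), verify the zero-vorticity core, material separation and nonconformality for the combined background $g_0+\beta b_{z,s}$, and fix $T<\min\{T_*,T_0\}$. You then rerun the packet construction with output target $L+\|\bar u(T)\|_{\dot W^{3,1}}$ and set $h_0=\beta b_{z,s}+p_0$. The only difference is presentational: you cite Theorem~\ref{thm:background} as a transferable black box, whereas the paper redoes its profile, packet and realization steps explicitly for the new background, because that theorem is literally stated for one fixed background.
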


\begin{proof}
\emph{Step 1: a local zero-vorticity hyperbolic tube.}
Choose $z\in B$ and $s>0$ such that $z+B_{8s}\Subset B$.  Let
$\chi\in C_c^\infty(B_8)$ be radial and equal to one on $B_4$, and set
\begin{equation*}
 \psi_{z,s}(x)=\chi\!\left(\frac{x-z}{s}\right)
 (x_1-z_1)(x_2-z_2),
 \qquad b_{z,s}=\nabla^\perp\psi_{z,s}.
\end{equation*}
Fix $\beta>0$ so small that
\begin{equation}\label{eq:local-beta-small}
 \beta\|b_{z,s}\|_{W^{3,1}}<\eta/2,
\end{equation}
and define
\begin{equation}\label{eq:local-augmented-background}
 \bar u_0=g_0+\beta b_{z,s}.
\end{equation}
Because $g_0$ vanishes on $B$, on $z+B_{4s}$ we have
\begin{equation}\label{eq:local-hyperbolic-core}
 \bar u_0(x)=\beta(x_1-z_1,-(x_2-z_2)),
 \qquad \bar\omega_0(x)=0.
\end{equation}

Let $\bar u(t)=\cS_t^\infty\bar u_0$, let $\bar\Phi$ be its flow, and put
$A(t)=D\bar\Phi(t,0,z)$.  Taylor expansion at $t=0$ gives
\begin{equation*}
 A(t)^TA(t)
 =I+2\beta t
 \begin{pmatrix}1&0\\0&-1\end{pmatrix}+O(t^2).
\end{equation*}
Hence there is $T_0>0$ such that $A(t)$ is nonconformal for
$0<t\le T_0$.  Moreover,
\[
 \dist\bigl(z+\overline{B_{2s}},
       \supp\nabla\bar\omega_0\bigr)>0.
\]
Indeed, the contribution of $g_0$ is outside $B$, while the vorticity of
$b_{z,s}$ is supported in the cutoff annulus.  Vorticity transport,
compactness, and Lemma~\ref{lem:flow-comparison} imply, after decreasing
$T_0$, that for some $d_0>0$,
\begin{equation}\label{eq:local-material-separation}
 \inf_{0\le t\le T_0}
 \dist\left(
  \bar\Phi(t,0,z+\overline{B_{2s}}),
  \supp\nabla\bar\omega(t)
 \right)\ge d_0.
\end{equation}
The flow is also uniformly bi-Lipschitz on the relevant compact sets.
These are precisely the geometric properties isolated in
Proposition~\ref{prop:background}, now verified for the augmented
background \eqref{eq:local-augmented-background}.

Choose
\begin{equation*}
 0<T<\min\{T_*,T_0\}.
\end{equation*}

\emph{Step 2: passive and nonlinear affine profiles.}
For $q\in z+B_{2s}$ set
\begin{equation*}
 X_q(t)=\bar\Phi(t,0,q),
 \qquad A_q(t)=D_q\bar\Phi(t,0,q),
 \qquad M_q(t)=\nabla\bar u(t,X_q(t)).
\end{equation*}
The matrices $A_q$ solve $\dot A_q=M_qA_q$, have determinant one, and
depend smoothly on $q$.  Put
\begin{equation}\label{eq:local-target-output}
 \mathcal M=L+\|\bar u(T)\|_{\dot W^{3,1}}+1.
\end{equation}
Choose
\begin{equation*}
 0<\eps<\min\{1,\eta/(8C_0)\},
 \qquad Q>8\mathcal M/\eps,
\end{equation*}
where $C_0$ is the universal constant in
Lemma~\ref{lem:initial-cost}.

Since $A_z(T)$ is nonconformal, the symbol
\begin{equation*}
 m_{A_z(T)}(\xi)=\frac{|\xi|^2}{|A_z(T)^{-T}\xi|^2}
\end{equation*}
is nonconstant.  Apply Proposition~\ref{prop:uniform-passive} to the family
$q\mapsto A_q(T)$ with base label $z$.  It gives a real
$f\in C_c^\infty(\R^2)$ satisfying
\begin{equation*}
 \|f\|_{\dot W^{4,1}}\le1
\end{equation*}
and, after shrinking the label neighborhood to $z+B_h$ with $0<h<s$, a
finite observation radius $R$ for which the following holds.  Define
\begin{equation*}
 \zeta_q^0(t,y)=(-\Delta f)(A_q(t)^{-1}y),
 \qquad w_q^0=K*\zeta_q^0.
\end{equation*}
At time $T$,
\begin{equation*}
 \int_{B_R}|D^3w_q^0(T,y)|\,\dd y>Q
 \qquad(q\in z+B_h).
\end{equation*}

For $a\ge0$ let
\begin{equation*}
 \partial_t\zeta_q^a+
 \bigl(M_q(t)y+aK*\zeta_q^a\bigr)\cdot\nabla\zeta_q^a=0,
 \qquad \zeta_q^a(0)=-\Delta f,
 \qquad w_q^a=K*\zeta_q^a.
\end{equation*}
Lemma~\ref{lem:profile-moments} gives zero mass and zero first moment for
these profiles at every time.  Lemma~\ref{lem:profile-smooth-bounds} gives
uniform compact support and high Sobolev bounds for
$q\in z+\overline{B_h}$ and $0\le a\le1$.  The proof of
Proposition~\ref{prop:small-a}, using Lemmas~\ref{lem:transport-Hs} and
\ref{lem:difference-Hs}, gives the same estimate here because $M_q$ is
smooth and uniformly bounded for $q\in z+\overline{B_h}$.  Thus
\begin{equation*}
 \sup_{q\in z+B_h}
 \|w_q^a-w_q^0\|_{C([0,T];W^{3,1}(B_R))}\le C_fa,
\end{equation*}
and there is $a_*>0$ such that
\begin{equation}\label{eq:local-nonlinear-profile-output}
 \int_{B_R}|D^3w_q^a(T,y)|\,\dd y\ge\frac{3Q}{4}
 \qquad(q\in z+B_h,\ 0<a<a_*).
\end{equation}

\emph{Step 3: physical packets and realization.}
Choose a finite integer $J$ so large that
\begin{equation*}
 a=\eps/J<a_*,
\end{equation*}
and choose distinct labels $q_1,\ldots,q_J\in z+B_h$.  For $r>0$ define
\begin{equation*}
 \Psi_0^{J,r}(x)=ar^2\sum_{j=1}^J
 f\!\left(\frac{x-q_j}{r}\right),
 \qquad p_0^{J,r}=\nabla^\perp\Psi_0^{J,r}.
\end{equation*}
For sufficiently small $r$, the packet supports are disjoint and contained
in $z+B_{2s}$.  Lemma~\ref{lem:initial-cost} gives
\begin{equation}\label{eq:local-packet-cost}
 \|p_0^{J,r}\|_{W^{3,1}}
 \le C_0Ja\|f\|_{\dot W^{4,1}}
     +C_fJa(r+r^2+r^3).
\end{equation}
Since $Ja=\eps$, after decreasing $r$ this is less than
$2C_0\eps<\eta/2$.

We next verify the hypotheses needed for the packet-realization argument on
the present background.
Let $U^{J,r}$ be the solution from $\bar u_0+p_0^{J,r}$ and set
$v=U^{J,r}-\bar u$.  The only background properties used in the
macroscopic bootstrap are smooth coefficient bounds, bi-Lipschitz control
of the flow, a material zero-vorticity tube, and its positive separation
from $\supp\nabla\bar\omega(t)$.  They were established in
\eqref{eq:local-hyperbolic-core}--\eqref{eq:local-material-separation}.

The endpoint estimate
in Proposition~\ref{prop:BS-toolbox}\textup{(i)} first gives a global
$O(r)$ perturbation velocity.  Each packet has zero circulation, support
radius $O(r)$, and $L^1$ vorticity $O(ar^2)$; hence
Proposition~\ref{prop:BS-toolbox}\textup{(ii)} gives $O(ar^3)$ for its
velocity and every fixed derivative on the background-vorticity tube.
Summing over the $J$ fixed packets gives $O(\eps r^3)$.
Lemma~\ref{lem:localized-response} and
Proposition~\ref{prop:BS-toolbox}\textup{(iii)} then give the $O(r^3)$
external vorticity and velocity bounds of Proposition~\ref{prop:macro}.
The support comparison closes by \eqref{eq:local-material-separation}.

In the moving coordinates $x=X_{q_j}(t)+ry$, the background Taylor
remainder is $O(r)$.  Lemma~\ref{lem:cross-rescaled} gives an $O(r^2)$
cross-packet coefficient, and Lemma~\ref{lem:external-rescaled} gives an
$O(a^{-1}r^2)$ external coefficient.  Since $a>0$ and $J$ have already
been fixed, these errors are smaller than the Taylor remainder.  The
divergence-free extension of Lemma~\ref{lem:df-extension} and the
high-order difference estimate used in Proposition~\ref{prop:decoupling}
therefore yield constants $C_{\rm pkt},r_{\rm pkt}>0$, depending on the
fixed objects but not on $r$, such that
\begin{equation}\label{eq:local-moving-packet-estimate}
 \max_{1\le j\le J}
 \left\|
 r^2D_x^3v(T,X_{q_j}(T)+r\,\cdot)
 -aD_y^3w_{q_j}^a(T)
 \right\|_{L^1(B_R)}
 \le C_{\rm pkt}ar
\end{equation}
whenever $0<r<r_{\rm pkt}$.  Thus the conclusion of
Proposition~\ref{prop:packet-realization} holds for the present background,
with constants independent of $r$.

Finally decrease $r$ so that the sets $X_{q_j}(T)+rB_R$ are pairwise
disjoint and $C_{\rm pkt}r\le Q/4$.  From
\eqref{eq:local-nonlinear-profile-output} and
\eqref{eq:local-moving-packet-estimate},
\begin{align}\label{eq:local-summed-output}
 \|U^{J,r}(T)-\bar u(T)\|_{\dot W^{3,1}}
 &\ge\sum_{j=1}^J
 \int_{X_{q_j}(T)+rB_R}|D^3v(T,x)|\,\dd x\notag\\
 &\ge J\frac{aQ}{2}
 =\frac{\eps Q}{2}>\mathcal M.
\end{align}
Set $p_0=p_0^{J,r}$ and $h_0=\beta b_{z,s}+p_0$.  Both summands are
supported in $B$, and \eqref{eq:local-beta-small},
\eqref{eq:local-packet-cost} give $\|h_0\|_{W^{3,1}}<\eta$.  Since
$g_0+h_0=\bar u_0+p_0$, the reverse triangle inequality,
\eqref{eq:local-target-output}, and \eqref{eq:local-summed-output} give
\[
 \|\cS_T^\infty(g_0+h_0)\|_{\dot W^{3,1}}
 >\mathcal M-\|\bar u(T)\|_{\dot W^{3,1}}>L.
\]
This proves \eqref{eq:local-insertion-conclusion}.
\end{proof}

\subsection{Endpoint flow and lower semicontinuity}
\label{sec:constructive-condensation}

To pass from smooth finite amplifiers to a single rough datum, we use the
canonical global flow on $\cX$ and a lower-semicontinuous extension of the
top-order $L^1$ velocity seminorm.  The same two tools will also be cited in
Section~\ref{sec:far-superposition} for the complementary prescribed-time
result.

\subsubsection{The global endpoint-vorticity flow on \texorpdfstring{$\cX$}{X}}

\begin{proposition}[Canonical global flow and weak endpoint continuity]
\label{prop:endpoint-flow}
For every $u_0\in\cX$, the Euler equation has a unique global finite-energy
solution $u$ such that
\begin{equation}\label{eq:endpoint-flow-class}
 \curl u\in C([0,\infty);W^{2,1}(\R^2)).
\end{equation}
Denote this solution by $\cS_tu_0=u(t)$.  For every $T>0$,
\begin{equation}\label{eq:endpoint-lipschitz-time}
 \nabla u\in L^1(0,T;L^\infty(\R^2)),
\end{equation}
and the map
\begin{equation}\label{eq:X-to-L2-continuity}
 \cS_t:\cX\longrightarrow L^2(\R^2)
\end{equation}
is continuous for every fixed $t\ge0$.  Moreover,
$t\mapsto\cS_tu_0$ is continuous from $[0,\infty)$ to $L^2$.
\end{proposition}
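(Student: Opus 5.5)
The plan is to build the flow at the level of vorticity, using the known global $W^{2,1}$ propagation of vorticity, and to recover the velocity from the Biot--Savart law in a finite-energy class.

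\emph{Data and functional setting.} For $u_0\in\cX$ we have $\omega_0=\curl u_0\in W^{2,1}$. Since $u_0\in L^1\cap L^\infty$ (Sobolev embedding $W^{3,1}\hookrightarrow L^\infty$ in two dimensions), we get $u_0\in L^2$. Moreover $\int\omega_0=0$, because $\omega_0$ is a derivative of an $L^1$ field. The embedding $W^{2,1}(\R^2)\hookrightarrow W^{1,2}\cap C_0$ puts $\omega_0$ in $L^1\cap L^\infty$. This is the Yudovich class, so uniqueness among finite-energy solutions with $\omega\in L^\infty_tL^1\cap L^\infty$ is classical. Uniqueness in the class \eqref{eq:endpoint-flow-class} therefore follows once existence is settled.

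\emph{Existence and the bound \eqref{eq:endpoint-lipschitz-time}.} Approximate $u_0$ in $W^{3,1}$ by $u_0^n\in C^\infty_{c,\sigma}$; these are dense, and the classical solutions lie in $W^{m,1}$ by Lemma~\ref{lem:moments}. Then invoke the global $W^{2,1}$ vorticity propagation of \cite{CozziHarrison2025,DanchinJeong2026}. The endpoint ingredient is that $W^{2,1}(\R^2)$ embeds into the space of $\omega$ for which $\nabla K*\omega\in L^\infty$. Indeed, $\nabla u$ is a Calder\'on--Zygmund operator applied to $\omega$, and $\dot W^{1,1}\subset \dot B^{0}_{2,1}$-type embeddings, equivalently $\dot W^{2,1}\subset\dot B^{1}_{\infty,1}$-free bounds, give $\|\nabla u\|_{L^\infty}\le C\|\omega\|_{W^{2,1}}$. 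Differentiating the transport equation twice and commuting then yields
\[
 \tfrac{\dd}{\dd t}\|\omega\|_{W^{2,1}}\le C\|\nabla u\|_{L^\infty}\|\omega\|_{W^{2,1}}+\|D^2u\,D\omega\|_{L^1}.
\]
The last product is the hardest term. I would control it by placing $D\omega$ in $L^2$ and $D^2u$ in $L^2$, with $\|D^2u\|_{L^2}\simeq\|D\omega\|_{L^2}\lesssim\|\omega\|_{W^{2,1}}$. This gives a local bound, and the cited global results upgrade it to a global one with a double-exponential bound in $t$. These bounds are uniform in $n$ on $[0,T]$. The $L^2$ difference estimate for smooth solutions,
\[
 \tfrac{\dd}{\dd t}\|u^n-u^m\|_{L^2}\le\|\nabla u^m\|_{L^\infty}\|u^n-u^m\|_{L^2},
\]
then gives a Cauchy sequence in $C([0,T];L^2)$. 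The limit $u$ solves Euler, satisfies $\omega\in L^\infty_tW^{2,1}$, and obeys \eqref{eq:endpoint-lipschitz-time}.

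\emph{Strong time continuity.} Weak-$*$ continuity in $W^{2,1}$ is immediate from the construction. The difficulty is upgrading to $C([0,\infty);W^{2,1})$, since $W^{2,1}$ is not reflexive and compactness arguments fail. This is the main obstacle. I would handle it by a Kato-type argument along the Lagrangian flow $\Phi$, which is $C^1$ by \eqref{eq:endpoint-lipschitz-time}; its $C^2$ regularity would require more than this bound and has to be supplied by the cited $W^{2,1}$ theory. Write $\omega(t)=\omega_0\circ\Phi_t^{-1}$ and expand $D^2\omega(t)$ by the chain rule. The leading term $D^2\omega_0(\Phi^{-1})[D\Phi^{-1},D\Phi^{-1}]$ is continuous in $L^1$ because composition with a continuous family of measure-preserving bi-Lipschitz maps is continuous on $L^1$. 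The term $D\omega_0(\Phi^{-1})D^2\Phi^{-1}$ needs $D^2\Phi^{-1}$ continuous in time in a suitable $L^p$, which I would take from the cited theory as well. Alternatively, I would cite directly the strong continuity proved in \cite{DanchinJeong2026}.

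\emph{Continuity of $\cS_t:\cX\to L^2$ and of $t\mapsto\cS_tu_0$.} If $u_0^k\to u_0$ in $\cX$, the $W^{2,1}$ bounds are uniform on $[0,T]$, so the same $L^2$ stability estimate gives $\|\cS_tu_0^k-\cS_tu_0\|_{L^2}\le e^{\int_0^t\|\nabla u\|_\infty}\|u_0^k-u_0\|_{L^2}$. Time continuity in $L^2$ follows from $\partial_tu=-\mathbb P(u\cdot\nabla u)\in L^\infty_tL^2$.
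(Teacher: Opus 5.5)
Your proposal is correct and follows essentially the same route as the paper: Yudovich uniqueness in the finite-energy class, and the cited Danchin--Jeong global propagation for $\curl u\in C([0,\infty);W^{2,1})$. It also uses, as the paper does, the endpoint embedding $W^{2,1}\hookrightarrow\dot B^0_{\infty,1}$ for the Lipschitz bound, an $L^2$ energy--Gronwall estimate using only $\int_0^t\|\nabla u\|_{L^\infty}$ of the limit solution, and $\partial_tu=-\mathbb P(u\cdot\nabla u)$ for time continuity. Your approximation scheme and Lagrangian sketch are superfluous once you cite that theorem (the paper applies it directly to $u_0$). On their own they would not suffice, since uniform $W^{2,1}$ bounds only pass to the limit with $D^2\omega$ a measure, so the citation carries the real weight in both arguments.
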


\begin{proof}
If $u_0\in\cX$, then
$\omega_0=\curl u_0\in W^{2,1}$.  The endpoint embedding established and
used in \cite{DanchinJeong2026} gives
$W^{2,1}\hookrightarrow \dot B^0_{\infty,1}\hookrightarrow L^\infty$
for the present $L^1$-normalized distributions, while
$W^{2,1}\subset L^1$ by definition.  Moreover,
$u_0\in W^{1,1}\hookrightarrow L^2$ in dimension two.  The finite-energy
Biot--Savart reconstruction of $\omega_0$ is exactly $u_0$: indeed,
$u_0-K*\omega_0\in L^2$ has both divergence and curl equal to zero, and
hence vanishes by the Fourier Hodge decomposition.  The standard
finite-energy Yudovich theory therefore gives a unique global Euler solution
starting from $u_0$.  Applying the global propagation theorem of Danchin
and Jeong to its vorticity yields \eqref{eq:endpoint-flow-class}.  Their
endpoint embedding and the Biot--Savart law give on every finite interval
\begin{equation*}
 \|\nabla u(t)\|_{L^\infty}
 \lesssim \|\omega(t)\|_{\dot B^0_{\infty,1}}
 \lesssim \|\omega(t)\|_{W^{2,1}},
\end{equation*}
which proves \eqref{eq:endpoint-lipschitz-time}.

Let $u_0^{(n)}\to u_0$ in $\cX$ and write
$u^{(n)}=\cS_tu_0^{(n)}$, $u=\cS_tu_0$.  Since convergence in
$W^{3,1}$ implies convergence in $L^2$, it is enough to estimate the
velocity difference $w=u^{(n)}-u$.  It satisfies
\[
 \partial_tw+u^{(n)}\cdot\nabla w+w\cdot\nabla u+\nabla q=0,
 \qquad \diver w=0.
\]
The finite-energy identity and \eqref{eq:endpoint-lipschitz-time} imply
\begin{equation*}
 \|w(t)\|_{L^2}
 \le \|w(0)\|_{L^2}
 \exp\left(\int_0^t\|\nabla u(s)\|_{L^\infty}\,\dd s\right).
\end{equation*}
Thus $u^{(n)}(t)\to u(t)$ in $L^2$, proving
\eqref{eq:X-to-L2-continuity}.  Finally, in Leray-projected form,
\[
 \partial_tu=-\mathbb P(u\cdot\nabla u),
 \qquad
 \|u\cdot\nabla u\|_{L^2}
 \le \|u\|_{L^2}\|\nabla u\|_{L^\infty}.
\]
Energy conservation and \eqref{eq:endpoint-lipschitz-time} therefore imply
$\partial_tu\in L^1(0,T;L^2)$, proving the asserted $L^2$ time continuity.
\end{proof}

\subsubsection{A lower-semicontinuous top-order functional}

For a scalar distribution $F\in\mathcal D'(\R^2)$ define
\begin{equation}\label{eq:measure-norm-distribution}
 \|F\|_{\mathcal M}
 :=\sup\left\{
 |\langle F,\varphi\rangle|:
 \varphi\in C_c^\infty(\R^2),\ \|\varphi\|_{L^\infty}\le1
 \right\}\in[0,\infty].
\end{equation}
This is finite exactly when $F$ is a finite signed Radon measure, in which
case it is its total variation.  For a vector distribution $v=(v_1,v_2)$,
set
\begin{equation}\label{eq:V3-functional}
 \mathfrak V_3(v)
 :=\sum_{j=1}^2\sum_{|\alpha|=3}
   \|\partial^\alpha v_j\|_{\mathcal M}.
\end{equation}
For $v\in W^{3,1}$, this is an equivalent coordinate form of
$\|v\|_{\dot W^{3,1}}$.

\begin{lemma}[Distributional lower semicontinuity]
\label{lem:V3-lsc}
If $v_n\to v$ in $\mathcal D'(\R^2)$, then
\begin{equation}\label{eq:V3-lsc}
 \mathfrak V_3(v)
 \le\liminf_{n\to\infty}\mathfrak V_3(v_n).
\end{equation}
Consequently, for every $t\ge0$, the map
\begin{equation}\label{eq:V3-flow-lsc}
 u_0\longmapsto\mathfrak V_3(\cS_tu_0)
\end{equation}
is lower semicontinuous on $\cX$.
\end{lemma}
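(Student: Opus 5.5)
The first assertion is a standard duality argument. The second follows by combining it with the $L^2$ continuity of Proposition~\ref{prop:endpoint-flow}.

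For \eqref{eq:V3-lsc}, fix $j\in\{1,2\}$, a multi-index $\alpha$ with $|\alpha|=3$, and a test function $\varphi\in C_c^\infty(\R^2)$ with $\|\varphi\|_{L^\infty}\le1$. By definition of distributional derivatives,
\[
 \langle\partial^\alpha v_j,\varphi\rangle=-\langle v_j,\partial^\alpha\varphi\rangle .
\]
Since $v_n\to v$ in $\mathcal D'$, this quantity equals
\[
 \lim_n\bigl(-\langle v_{n,j},\partial^\alpha\varphi\rangle\bigr)=\lim_n\langle\partial^\alpha v_{n,j},\varphi\rangle .
\]
Hence
\[
 |\langle\partial^\alpha v_j,\varphi\rangle|\le\liminf_n\|\partial^\alpha v_{n,j}\|_{\mathcal M}.
\]
Taking the supremum over admissible $\varphi$ shows that each map $F\mapsto\|F\|_{\mathcal M}$ is a supremum of distributionally continuous functions, and is therefore lower semicontinuous. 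The functional $\mathfrak V_3$ is a finite sum of the nonnegative lower semicontinuous functionals $v\mapsto\|\partial^\alpha v_j\|_{\mathcal M}$. Superadditivity of $\liminf$ then gives \eqref{eq:V3-lsc}, with values in $[0,\infty]$ allowed throughout.

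For \eqref{eq:V3-flow-lsc}, fix $t\ge0$ and let $u_0^{(n)}\to u_0$ in $\cX$. By \eqref{eq:X-to-L2-continuity} in Proposition~\ref{prop:endpoint-flow}, $\cS_tu_0^{(n)}\to\cS_tu_0$ in $L^2(\R^2)$. Convergence in $L^2$ implies convergence in $\mathcal D'$, because pairing with a fixed $\psi\in C_c^\infty\subset L^2$ is continuous. Applying the first part to $v_n=\cS_tu_0^{(n)}$ yields
\[
 \mathfrak V_3(\cS_tu_0)\le\liminf_n\mathfrak V_3(\cS_tu_0^{(n)}).
\]
Since $\cX$ is a metric space, sequential lower semicontinuity is lower semicontinuity.

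There is no real obstacle; the only point to check is that $\cS_t$ is defined on all of $\cX$ with values in $L^2$, so that the distributional limit is identified with $\cS_tu_0$ itself rather than some other weak limit. This is exactly what Proposition~\ref{prop:endpoint-flow} provides.

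Finally, on $W^{3,1}$ the functional $\mathfrak V_3$ coincides with the $L^1$ norm of the third derivatives, so it is equivalent to $\|\cdot\|_{\dot W^{3,1}}$. When $\mathfrak V_3(v)<\infty$ the third derivatives are only measures, but this causes no difficulty for the semicontinuity statement.
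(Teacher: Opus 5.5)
Your proposal is correct and follows essentially the same argument as the paper: convergence of each pairing $\langle\partial^\alpha v_{n,j},\varphi\rangle$, then the liminf, then the supremum over $\varphi$, then summing the finitely many terms, and finally the $L^2$ continuity of $\cS_t$ from Proposition~\ref{prop:endpoint-flow} to supply distributional convergence. Your added remarks on the sign $(-1)^{|\alpha|}$, superadditivity of $\liminf$ in $[0,\infty]$, and sequential versus topological lower semicontinuity on the metric space $\cX$ are accurate details that the paper leaves implicit.
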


\begin{proof}
For every fixed test function $\varphi$ in
\eqref{eq:measure-norm-distribution}, distributional convergence gives
\[
 \langle\partial^\alpha v_{n,j},\varphi\rangle
 \longrightarrow
 \langle\partial^\alpha v_j,\varphi\rangle.
\]
Taking absolute values, then the liminf, and finally the supremum over
$\varphi$ proves lower semicontinuity of each term in
\eqref{eq:V3-functional}.  The sum is finite, so
\eqref{eq:V3-lsc} follows.  Proposition~\ref{prop:endpoint-flow} gives
$\cS_tu_0^{(n)}\to\cS_tu_0$ in $L^2$, hence in distributions, whenever
$u_0^{(n)}\to u_0$ in $\cX$.  This proves
\eqref{eq:V3-flow-lsc}.
\end{proof}

\begin{corollary}[Time lower semicontinuity]
\label{cor:V3-time-lsc}
For every $u_0\in\cX$, the map
\begin{equation*}
 t\longmapsto\mathfrak V_3(\cS_tu_0)
\end{equation*}
is lower semicontinuous on $[0,\infty)$.  Consequently,
$\{t:\mathfrak V_3(\cS_tu_0)>N\}$ is relatively open for every $N>0$.
\end{corollary}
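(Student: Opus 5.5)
The argument runs exactly parallel to the proof of Lemma~\ref{lem:V3-lsc}, with the time variable in place of the datum. Fix $u_0\in\cX$, a time $t_0\ge0$, and a sequence $t_n\to t_0$ in $[0,\infty)$. The last assertion of Proposition~\ref{prop:endpoint-flow} says that $t\mapsto\cS_tu_0$ is continuous into $L^2(\R^2)$, so $\cS_{t_n}u_0\to\cS_{t_0}u_0$ in $L^2$. Convergence in $L^2$ implies convergence in $\mathcal D'(\R^2)$, because $|\langle v_n-v,\varphi\rangle|\le\|v_n-v\|_{L^2}\|\varphi\|_{L^2}$ for every test function $\varphi$.

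We then apply \eqref{eq:V3-lsc} with $v_n=\cS_{t_n}u_0$ and $v=\cS_{t_0}u_0$. This gives
\[
 \mathfrak V_3(\cS_{t_0}u_0)\le\liminf_{n\to\infty}\mathfrak V_3(\cS_{t_n}u_0).
\]
This inequality holds for every sequence $t_n\to t_0$. The space $[0,\infty)$ is metric, so sequential lower semicontinuity is the same as lower semicontinuity, and the first claim follows. Values equal to $+\infty$ are allowed throughout, since $\mathfrak V_3$ takes values in $[0,\infty]$ and \eqref{eq:V3-lsc} is an inequality in the extended reals.

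For the second claim, fix $N>0$ and take $t_0$ with $\mathfrak V_3(\cS_{t_0}u_0)>N$. Suppose the superlevel set were not a relative neighbourhood of $t_0$. Then there would be $t_n\to t_0$ with $\mathfrak V_3(\cS_{t_n}u_0)\le N$, and lower semicontinuity would force $\mathfrak V_3(\cS_{t_0}u_0)\le N$, a contradiction. Hence $\{t:\mathfrak V_3(\cS_tu_0)>N\}$ is relatively open in $[0,\infty)$.

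The only nontrivial input is the $L^2$ time continuity from Proposition~\ref{prop:endpoint-flow}, which we take as given. No step needs $\cS_tu_0\in W^{3,1}$; this matters, because the functional $\mathfrak V_3$ is defined on arbitrary distributions precisely so that $+\infty$ is a legitimate value.
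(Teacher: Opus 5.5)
Your proposal is correct and follows the paper's own proof: you use the $L^2$ time continuity from Proposition~\ref{prop:endpoint-flow}, pass to distributional convergence, and then apply Lemma~\ref{lem:V3-lsc}. Your added details (sequential lower semicontinuity suffices on a metric space, values in $[0,\infty]$, and the openness of superlevel sets) simply spell out what the paper's two-line proof leaves implicit.
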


\begin{proof}
By Proposition~\ref{prop:endpoint-flow}, $t\mapsto\cS_tu_0$ is continuous
into $L^2$, hence into distributions.  Apply
Lemma~\ref{lem:V3-lsc}.
\end{proof}

\begin{remark}[Role of the extended functional]
The quantity $\mathfrak V_3$ is not a new phase space.  It is an auxiliary
extended-valued functional used only inside the condensation arguments.
Proposition
\ref{prop:endpoint-flow} gives continuity of the endpoint flow into $L^2$,
not into $W^{3,1}$, so the $W^{3,1}$ norm itself does not provide the open
superlevel sets needed below.  In contrast, distributional lower
semicontinuity makes
$\{u_0:\mathfrak V_3(\cS_tu_0)>N\}$ open in $\cX$.  After this topological
step, the elementary bound of $\mathfrak V_3$ by the $W^{3,1}$ norm removes
the auxiliary notation from the statement of Theorem~\ref{thm:compact-main}.
For the complementary residual result, the stronger conclusion
$\mathfrak V_3(v)=\infty$ implies $v\notin W^{3,1}$.
\end{remark}

\subsection{Compactly supported condensation}
\label{subsec:compact-condensation}

We now prove the principal theorem by reserving one compact region and
choosing a new activation time at each stage.

\begin{proof}[Proof of Theorem~\ref{thm:compact-main}]
Fix a constant $C_3\ge1$ such that
\begin{equation}\label{eq:V3-W31-comparison}
 \mathfrak V_3(v)\le C_3\|v\|_{W^{3,1}}
 \qquad (v\in W^{3,1}(\R^2;\R^2)).
\end{equation}
Since $S$ is compact and $\mathcal O$ is an open neighborhood of $S$, one
can choose an open reserve ball $G$ such that
\begin{equation*}
 \overline G\Subset\mathcal O\setminus S.
\end{equation*}
Set
\begin{equation*}
 K=S\cup\overline G.
\end{equation*}
Then $K\Subset\mathcal O$.  Choose balls $B_n\Subset G$ with pairwise
disjoint closures.

Set $u_0^{(0)}=\bar u_0$, $t_0=1$, and
\begin{equation*}
 \mathcal C_0=\overline B_{\cX}(\bar u_0,\delta/2).
\end{equation*}
We recursively construct smooth centers $u_0^{(n)}$ supported in $K$,
times
\begin{equation}\label{eq:compact-time-recursion}
 0<t_n<\min\{\tau_n,t_{n-1}/2\},
\end{equation}
and closed balls
\begin{equation}\label{eq:compact-nested-balls}
 \mathcal C_n=\overline B_{\cX}(u_0^{(n)},\rho_n),
 \qquad 0<\rho_n\le2^{-n},
\end{equation}
such that
\begin{equation}\label{eq:compact-nesting}
 \mathcal C_n\subset\operatorname{int}\mathcal C_{n-1}
 \cap\{v_0:\mathfrak V_3(\cS_{t_n}v_0)>C_3M_n\}.
\end{equation}

Assume the construction has reached stage $n-1$.  By induction,
\begin{equation*}
 \supp u_0^{(n-1)}
 \subset S\cup\bigcup_{j<n}B_j,
\end{equation*}
so the current center vanishes on $B_n$.  Apply
Proposition~\ref{prop:local-insertion} in $B_n$, taking $T_*$ equal to the
right-hand side of \eqref{eq:compact-time-recursion}.  Choose its size
parameter $\eta$ smaller than the distance from $u_0^{(n-1)}$ to
$\partial\mathcal C_{n-1}$.  Choose the finite output target $L$ large
enough, using the equivalence between $\mathfrak V_3$ and the coordinate
$\dot W^{3,1}$ seminorm on smooth fields, that the new center
\begin{equation*}
 u_0^{(n)}=u_0^{(n-1)}+h_0
\end{equation*}
satisfies
\begin{equation*}
 u_0^{(n)}\in\operatorname{int}\mathcal C_{n-1},
 \qquad
 \mathfrak V_3(\cS_{t_n}u_0^{(n)})>C_3M_n.
\end{equation*}
The superlevel set in \eqref{eq:compact-nesting} is open by
Lemma~\ref{lem:V3-lsc}.  Hence $\rho_n>0$ can be chosen so that
\eqref{eq:compact-nested-balls}--\eqref{eq:compact-nesting} hold.

The centers form a Cauchy sequence in the complete space $\cX$: if $m>n$,
then $u_0^{(m)}\in\mathcal C_n$, so
$\|u_0^{(m)}-u_0^{(n)}\|_{W^{3,1}}\le\rho_n\le2^{-n}$.
Let $u_0$ be their limit.  Since every tail lies in every preceding closed
ball, $u_0\in\mathcal C_n$ for all $n$.  Hence
\begin{equation}\label{eq:compact-V3-thresholds}
 \mathfrak V_3(\cS_{t_n}u_0)>C_3M_n
 \qquad(n\ge1).
\end{equation}
Equations \eqref{eq:V3-W31-comparison} and
\eqref{eq:compact-V3-thresholds}, with the convention that the
$W^{3,1}$ norm is infinite outside $W^{3,1}$, give
\eqref{eq:compact-main-thresholds}.

Every center is supported in the same closed set $K$.  Convergence in
$W^{3,1}$ implies distributional convergence, so $\supp u_0\subset K$.
Moreover, every increment $u_0^{(n)}-u_0^{(n-1)}$ is supported in
$\overline G$; hence
\begin{equation*}
 \supp(u_0-\bar u_0)\subset\overline G\subset K\setminus S.
\end{equation*}
Together with $u_0\in\mathcal C_0$, these inclusions prove
\eqref{eq:compact-main-data}.
Equation \eqref{eq:compact-time-recursion} gives $t_n\downarrow0$.

Fix $T,N>0$ and choose $n$ with $t_n<T$ and $M_n>N$.  By
\eqref{eq:compact-V3-thresholds} and
Corollary~\ref{cor:V3-time-lsc}, the set
\begin{equation*}
 \{t\in(0,T):\mathfrak V_3(\cS_tu_0)>C_3N\}
\end{equation*}
contains a relative open neighborhood of $t_n$ and hence has positive
measure.  On this set \eqref{eq:V3-W31-comparison} gives
$\|\cS_tu_0\|_{W^{3,1}}>N$ (again in the extended sense).  Since $N$ is
arbitrary, \eqref{eq:compact-main-essential} follows.
\end{proof}

\section{Complementary prescribed-time strong ill-posedness}
\label{sec:far-superposition}

We now prove a complementary result with a stronger conclusion in time,
at the cost of losing compact support of the limiting datum.  Its statement
comes first so that the purpose of the far-field construction is explicit.

\begin{theorem}[Bourgain--Li-type strong ill-posedness]
\label{thm:strong}
There exists a dense $G_\delta$ set $\cG\subset\cX$ with the following
property.  For every $u_0\in\cG$, let $u$ be the unique global finite-energy
Euler solution whose vorticity satisfies
\begin{equation*}
 \omega=\curl u\in C([0,\infty);W^{2,1}(\R^2)).
\end{equation*}
Then:
\begin{enumerate}[label=\textup{(\roman*)},leftmargin=2.6em]
 \item there exists a dense $G_\delta$ set
 \begin{equation}\label{eq:residual-bad-times}
  \mathcal T(u_0)\subset(0,\infty)
 \end{equation}
 containing $\mathbb Q\cap(0,\infty)$ such that
 \begin{equation}\label{eq:residual-W31-loss}
  u(t)\notin W^{3,1}(\R^2)
  \qquad\text{for every }t\in\mathcal T(u_0);
 \end{equation}
 \item on every nonempty open interval $I\subset(0,\infty)$,
 \begin{equation}\label{eq:strong-essential-inflation}
  \operatorname*{ess\,sup}_{t\in I}
  \|u(t)\|_{W^{3,1}(\R^2)}=\infty.
 \end{equation}
\end{enumerate}
In particular, $u(1/n)\notin W^{3,1}(\R^2)$ for every $n\ge1$.
Moreover, every nonempty open ball in $\cX$ contains such a datum, obtained
as a $W^{3,1}$ limit of compactly supported smooth divergence-free fields
with a prescribed geometric convergence modulus.
\end{theorem}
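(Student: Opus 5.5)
The plan is a Baire-category argument in the complete space $\cX$, organized around the open superlevel sets furnished by Lemma~\ref{lem:V3-lsc}. For $t>0$ and $N\in\N$ set
\[
 \mathcal U_{t,N}:=\{u_0\in\cX:\mathfrak V_3(\cS_tu_0)>N\},
\]
which is open in $\cX$ by \eqref{eq:V3-flow-lsc}. Define
\[
 \cG:=\bigcap_{t\in\mathbb Q\cap(0,\infty)}\ \bigcap_{N\in\N}\mathcal U_{t,N}.
\]
This is a countable intersection of open sets, hence a $G_\delta$. Once each $\mathcal U_{t,N}$ is shown to be dense, Baire's theorem makes $\cG$ dense. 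For $u_0\in\cG$ we get $\mathfrak V_3(\cS_tu_0)=\infty$ at every positive rational $t$, and therefore $u(t)\notin W^{3,1}$ by \eqref{eq:V3-W31-comparison}.

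For part (i), set
\[
 \mathcal T(u_0):=\bigcap_N\{t>0:\mathfrak V_3(\cS_tu_0)>N\}.
\]
Each set in this intersection is open by Corollary~\ref{cor:V3-time-lsc} and contains the positive rationals, so it is dense. Hence $\mathcal T(u_0)$ is a dense $G_\delta$ on which $u(t)\notin W^{3,1}$.

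For part (ii), take a nonempty open interval $I$ and any $N$. Pick a rational $t\in I$. By lower semicontinuity in time, $\{\mathfrak V_3(\cS_su_0)>C_3N\}\cap I$ is a nonempty relatively open set, so it has positive measure. This gives \eqref{eq:strong-essential-inflation}.

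The core step is density of $\mathcal U_{t,N}$ for a fixed rational $t>0$. Since $C_{c,\sigma}^\infty$ is dense in $\cX$, it suffices to approximate a given $g_0\in C_{c,\sigma}^\infty$ within $\eta$ by an element of $\mathcal U_{t,N}$. Proposition~\ref{prop:local-insertion} only produces a time $T<T_*$ that we do not control, whereas here $t$ is prescribed. I would remove this obstruction with the amplitude--time scaling \eqref{eq:solution-map-scaling} combined with far-field placement. Apply Theorem~\ref{thm:zero} with time $t$, smallness $\eta/2$ and target $L$ to get a compactly supported smooth amplifier $p_0$. Then translate it, $p_0^{(y)}=p_0(\cdot-y)$, with $|y|$ very large, which is the far-field superposition named in the section title. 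The candidate datum is $g_0+p_0^{(y)}$.

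The main obstacle is showing that, as $|y|\to\infty$, the solution from $g_0+p_0^{(y)}$ at time $t$ is close to $\cS_t^\infty g_0+(\cS_t^\infty p_0)(\cdot-y)$, closely enough that the $\dot W^{3,1}$ lower bound survives. I would run the proof of Proposition~\ref{prop:macro} essentially verbatim with the separation $d$ replaced by $|y|$:
\begin{itemize}
 \item the two smooth solutions have bounded Lipschitz norms on $[0,t]$ and supports that stay in balls of fixed radius about $0$ and $y$;
 \item both vorticities have zero mass and zero first moment by Lemma~\ref{lem:moments}, so each one's velocity on the other's support is $O(|y|^{-3})$ in every $C^m$ norm by \eqref{eq:far-decay};
 \item a bootstrap in the style of Lemma~\ref{lem:localized-response}, applied to the vorticity differences, shows that these differences are $O(|y|^{-3})$ in $H^s\cap L^1$ on $[0,t]$;
 \item the top-order $L^1$ norm of the velocity near the amplifier therefore converges to $\|\cS_t^\infty p_0\|_{\dot W^{3,1}}$ restricted to a large ball, while the tails are controlled by \eqref{eq:far-decay}.
\end{itemize}
Choosing $L>C_3N+\|\cS_t^\infty g_0\|_{\dot W^{3,1}}+1$ and $|y|$ large then places $g_0+p_0^{(y)}$ in $\mathcal U_{t,N}$, at distance less than $\eta$ from $g_0$.

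The final clause, a limit of compactly supported fields with a geometric convergence modulus, would follow by making the Baire argument constructive. Enumerate the pairs $(t,N)$, and at each stage add a far-translated amplifier. Nest closed balls of radii at most $2^{-n}$ inside the open sets $\mathcal U_{t_k,N_k}$, exactly as in the proof of Theorem~\ref{thm:compact-main}. The limit lies in $\cG$.
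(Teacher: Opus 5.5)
Your proposal is correct and has the same architecture as the paper. You use the open superlevel sets $\mathcal U_{t,N}$, obtain their density from Theorem~\ref{thm:zero} at the prescribed time plus a far translation of the amplifier (this is exactly Theorem~\ref{thm:every-neighborhood}), intersect over positive rationals and integers, and use time lower semicontinuity for (i) and (ii).

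There are two differences.

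\emph{Density of $\cG$.} The paper deliberately avoids Baire. It proves density and the final constructive clause in one stroke via Lemma~\ref{lem:nested-ball-selection}, using a fixed dense sequence in $C^\infty_{c,\sigma}$. Your Baire argument, plus a separate nested construction that adds a far-translated amplifier at each stage, is equally valid.

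\emph{Far-field decoupling.} This is the more substantive difference. The paper's Proposition~\ref{prop:far-field-superposition} works entirely at the velocity level. It treats $g+\mathsf T_{Re}h$ as an approximate solution with defect $\mathbb P(g\cdot\nabla h_R+h_R\cdot\nabla g)$. This defect tends to zero in $L^1_tH^m$ by the soft compactness Lemma~\ref{lem:translate-product}. Uniform $H^{m+1}$ bounds from the logarithmic estimate and an $H^m$ stability estimate finish the argument. No moment cancellation, support tracking, or rate is used.

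Your route instead splits the vorticity into the two components transported by the full flow. In the frame of each uncoupled smooth solution, you apply Lemma~\ref{lem:localized-response}, with the other component's Biot--Savart field as the small forcing field. This gives an explicit rate and reuses the Section~4 machinery. The cost is tracking vorticity supports, via the uniform $L^\infty$ velocity bound, and checking the lemma's hypotheses.

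Two small refinements:
\begin{itemize}
\item The transported components keep zero mass but not individually zero first moment. So the $O(|y|^{-3})$ rate from \eqref{eq:far-decay} applies only to the uncoupled solutions. The zero-mass cancellation behind Proposition~\ref{prop:BS-toolbox}\textup{(ii)}, with separation $\sim|y|$, already gives an $O(|y|^{-2})$ bound on the full cross field in every $C^m$ norm near the other component. That bound depends only on conserved quantities, so the lemma applies directly without a coupled bootstrap.
\item Your choice $L>C_3N+\cdots$ uses the comparison constant in the wrong direction. To force $\mathfrak V_3>N$ you need the lower equivalence between $\mathfrak V_3$ and the coordinate $\dot W^{3,1}$ seminorm on smooth fields. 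This is harmless, since $L$ is arbitrary.
\end{itemize}
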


Theorem~\ref{thm:zero} gives local unboundedness at zero.  To apply a
diagonal construction in the complete phase space $\cX$, we first show that
the same fixed-time unboundedness occurs in every nonempty open subset of
$\cX$.  The point is that a fixed smooth norm-inflating module may be placed
arbitrarily far from a fixed smooth background.  No estimate uniform in the
size of the module's higher Sobolev norms is needed: the module is fixed
first, and the separation distance is chosen last.

For $y\in\R^2$, write
\begin{equation*}
 (\mathsf T_y f)(x):=f(x-y).
\end{equation*}

\subsection{Density of compactly supported solenoidal data}

\begin{lemma}[Solenoidal endpoint density]
\label{lem:X-density}
The space $C_{c,\sigma}^\infty(\R^2)$ is dense in $\cX$.
\end{lemma}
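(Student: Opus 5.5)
We prove the lemma at the level of stream functions, because cutting off a stream function preserves the divergence-free condition automatically. Fix $u\in\cX$. Since $\diver u=0$ on the simply connected plane and $u\in W^{3,1}\subset L^2$, there is a stream function $\psi$ with $u=\nabla^\perp\psi$, unique up to constants. In Fourier variables it is $\widehat\psi(\xi)=-i\,\xi^\perp\cdot\widehat u(\xi)/|\xi|^2$, or, on the physical side, $\psi=(-\Delta)^{-1}\curl u$ with the additive constant fixed. All derivatives $D^k\psi$ with $1\le k\le4$ lie in $L^1$. The function $\psi$ itself need not be integrable and may grow logarithmically, so it should only be used through its gradient together with a normalization on annuli.

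\emph{Step 1: truncation.} Let $\chi_N(x)=\chi(x/N)$, where $\chi\in C_c^\infty(B_2)$ equals one on $B_1$. On the annulus $A_N=B_{2N}\setminus B_N$, let $c_N$ be the average of $\psi$ and set
\[
 u_N=\nabla^\perp\bigl(\chi_N(\psi-c_N)\bigr)
 =\chi_Nu+(\psi-c_N)\nabla^\perp\chi_N .
\]
This field is divergence-free and compactly supported, and $u_N=u$ on $B_N$. The term $\chi_Nu$ converges to $u$ in $W^{3,1}$: derivatives that fall on $\chi_N$ gain factors $N^{-k}$, and $\|D^ju\|_{L^1(|x|>N)}\to0$ for each $j\le3$.

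\emph{Step 2: the commutator term (main obstacle).} We must show
\[
 \|D^k\bigl((\psi-c_N)\nabla^\perp\chi_N\bigr)\|_{L^1}\to0,
 \qquad 0\le k\le3 .
\]
Every term in this expansion has the form $D^{j}(\psi-c_N)\,D^{k-j+1}\chi_N$ and is supported in $A_N$, with $|D^{k-j+1}\chi_N|\lesssim N^{-(k-j+1)}$. For $j\ge1$ the bound is $N^{-(k-j+1)}\|D^j\psi\|_{L^1(A_N)}$, which tends to zero because $D^j\psi=\pm$ components of $D^{j-1}u$ lie in $L^1$ and their tails vanish. For $j=0$ we apply the Poincaré inequality on the annulus, scaled so that $\|\psi-c_N\|_{L^1(A_N)}\le CN\|\nabla\psi\|_{L^1(A_N)}$. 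This gives the bound $CN^{-k}\|u\|_{L^1(A_N)}$, which also tends to zero. The point of this step is to use the mean-value normalization so that the possible growth of $\psi$ never enters the estimate.

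\emph{Step 3: mollification.} The field $u_N$ is compactly supported, divergence-free and lies in $W^{3,1}$, but it is not yet smooth. Mollify it with $\varphi_\epsilon$. The result $\varphi_\epsilon*u_N$ is still compactly supported and divergence-free, since convolution commutes with $\diver$. It converges to $u_N$ in $W^{3,1}$ as $\epsilon\to0$ by continuity of translation in $L^1$. A diagonal choice of $N$ and then $\epsilon$ yields fields in $C_{c,\sigma}^\infty$ converging to $u$ in $\cX$.

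If one prefers, Step 2 can be avoided by mollifying first. For $j\ge1$ the required facts are then immediate, and the only real input is still the annular Poincaré bound, which is the delicate point of the argument.
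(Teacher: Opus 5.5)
Your proof is correct and is essentially the paper's argument: a stream function with $D^k\psi\in L^1$ for $1\le k\le4$, the cutoff $\chi_N(\psi-c_N)$ with $c_N$ fixed by a scaled $L^1$ Poincar\'e inequality on the annulus, Leibniz estimates on the commutator terms, and a final mollification. The differences are inessential: you mollify $u_N$ rather than the truncated stream function, and you use the annulus itself rather than a slightly enlarged one. Your closing aside is inaccurate, though: mollifying first does not avoid Step~2, because the $j=0$ commutator term still needs the annular Poincar\'e bound exactly as before.
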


\begin{proof}
Let $u\in\cX$.  Since $\diver u=0$ and $\R^2$ is simply connected, there is
a distributional stream function $\psi$ such that
\begin{equation*}
 u=\nabla^\perp\psi.
\end{equation*}
For instance, with the convention
$\nabla^\perp=(\partial_2,-\partial_1)$, one may take for almost every $x$
\[
 \psi(x_1,x_2)=-\int_{-\infty}^{x_1}u_2(s,x_2)\,\dd s.
\]
The identity $\partial_2\psi=u_1$ follows from $\diver u=0$ and the
one-dimensional vanishing at infinity of $W^{1,1}$ functions.  Hence
\begin{equation}\label{eq:density-psi-derivatives}
 D^k\psi\in L^1(\R^2),\qquad 1\le k\le4.
\end{equation}

Choose $\chi\in C_c^\infty(B_2)$ with $\chi=1$ on $B_1$ and put
$\chi_R(x)=\chi(x/R)$.  Let
\[
 A_R=B_{2R}\setminus\overline{B_R},
 \qquad
 \widetilde A_R=B_{3R}\setminus\overline{B_{R/2}}.
\]
By the scale-invariant $L^1$ Poincar\'e inequality on the connected annulus
$\widetilde A_R$, there exists a constant $c_R$ such that
\begin{equation}\label{eq:annular-poincare}
 \|\psi-c_R\|_{L^1(A_R)}
 \le CR\|\nabla\psi\|_{L^1(\widetilde A_R)}.
\end{equation}
Set $\psi_R=\chi_R(\psi-c_R)$.  For $1\le\ell\le4$, Leibniz' rule and
\eqref{eq:annular-poincare} give
\begin{align*}
 \|D^\ell\psi_R-D^\ell\psi\|_{L^1}
 &\le \|D^\ell\psi\|_{L^1(\R^2\setminus B_R)}
   +C\sum_{k=1}^{\ell-1}R^{-k}
       \|D^{\ell-k}\psi\|_{L^1(A_R)}\\
 &\quad
   +CR^{-\ell}\|\psi-c_R\|_{L^1(A_R)}
 \longrightarrow0
 \qquad(R\to\infty).
\end{align*}
Indeed, every derivative of positive order in
\eqref{eq:density-psi-derivatives} has an integrable tail, and the last term
is bounded by
$CR^{1-\ell}\|\nabla\psi\|_{L^1(\widetilde A_R)}$.
Therefore $u_R=\nabla^\perp\psi_R$ is compactly supported, divergence-free,
and converges to $u$ in $W^{3,1}$.  Finally mollify $\psi_R$ inside a
slightly enlarged compact set.  Taking its perpendicular gradient produces
a sequence in $C_{c,\sigma}^\infty(\R^2)$ converging to $u$ in $W^{3,1}$.
\end{proof}

\subsection{Products of diverging translates}

\begin{lemma}[Uniform separation of Sobolev products]
\label{lem:translate-product}
Let $s>2$ be an integer and let $\mathcal K_1,\mathcal K_2$ be compact
subsets of $H^s(\R^2)$.  Then, for every unit vector $e$,
\begin{equation}\label{eq:translate-product-limit}
 \sup_{f\in\mathcal K_1,\ g\in\mathcal K_2}
 \bigl(
 \|f\,\mathsf T_{Re}g\|_{H^{s-1}}
 +\|f\,\nabla\mathsf T_{Re}g\|_{H^{s-1}}
 \bigr)
 \longrightarrow0
 \qquad(R\to\infty),
\end{equation}
whenever the displayed products are interpreted componentwise.  The same
statement holds with the two factors interchanged.
\end{lemma}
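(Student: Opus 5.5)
The plan is to first prove the statement for a single pair $(f,g)$, using density of $C_c^\infty$ in $H^s$ and the algebra property of $H^{s-1}$, and then to pass to compact families by a finite $\epsilon$-net argument.

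\emph{Step 1: bilinear bounds uniform in $R$.} Since $s-1>1=d/2$, $H^{s-1}(\R^2)$ is an algebra. Translation is an isometry of every $H^k$, so
\[
 \|f\,\mathsf T_{Re}g\|_{H^{s-1}}\le C\|f\|_{H^{s-1}}\|g\|_{H^{s-1}},\qquad
 \|f\,\nabla\mathsf T_{Re}g\|_{H^{s-1}}\le C\|f\|_{H^{s-1}}\|g\|_{H^{s}},
\]
with $C$ independent of $R$ and $e$. Both products are therefore bilinear and jointly continuous from $H^s\times H^s$ into $H^{s-1}$, uniformly in the translation.

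\emph{Step 2: compactly supported pairs.} Suppose $f,g\in C_c^\infty$ with supports in $B_\varrho$. Once $R>2\varrho$, the supports of $f$ and $\mathsf T_{Re}g$ are disjoint, so both products vanish identically. This holds for every unit vector $e$.

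\emph{Step 3: reduction to Step 2.} Fix $\epsilon>0$. By compactness, choose finite $\epsilon$-nets $\{f_i\}$ of $\mathcal K_1$ and $\{g_k\}$ of $\mathcal K_2$ in $H^s$. Using density of $C_c^\infty$ in $H^s$, replace each net element by a compactly supported smooth function within $\epsilon$ of it. This gives finitely many functions supported in one common ball $B_\varrho$. All elements of $\mathcal K_1\cup\mathcal K_2$ are bounded in $H^s$ by some $A$.

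For $f\in\mathcal K_1$ and $g\in\mathcal K_2$, pick $\tilde f,\tilde g$ from the modified nets with $\|f-\tilde f\|_{H^s}\le2\epsilon$ and $\|g-\tilde g\|_{H^s}\le2\epsilon$. Expand by bilinearity:
\[
 f\,\mathsf T_{Re}g-\tilde f\,\mathsf T_{Re}\tilde g
 =(f-\tilde f)\,\mathsf T_{Re}g+\tilde f\,\mathsf T_{Re}(g-\tilde g).
\]
By Step 1 this difference has $H^{s-1}$ norm at most $C(A+1)\epsilon$, and the same bound holds for the gradient product. For $R>2\varrho$, Step 2 shows that $\tilde f\,\mathsf T_{Re}\tilde g$ and $\tilde f\,\nabla\mathsf T_{Re}\tilde g$ vanish. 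Hence the supremum in \eqref{eq:translate-product-limit} is at most $2C(A+1)\epsilon$ for all $R>2\varrho$, uniformly in $e$. Since $\epsilon$ is arbitrary, the limit is zero.

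For the interchanged products, the same argument applies because the Step 1 estimates are symmetric. Alternatively, translate by $-Re$: $(\mathsf T_{Re}f)g=\mathsf T_{Re}\bigl(f\,\mathsf T_{-Re}g\bigr)$, and $-e$ is again a unit vector. Vector-valued (componentwise) products are handled one entry at a time.

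\emph{Main point to check.} The only delicate step is Step 1: a derivative falls on $g$, so $g$ must be controlled in $H^s$ while the product is measured in $H^{s-1}$. This is exactly the algebra property of $H^{s-1}$, which requires $s-1>1$, i.e.\ $s>2$ as assumed.
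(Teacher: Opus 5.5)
Your proposal is correct and follows essentially the same route as the paper: product bounds into $H^{s-1}$ that are uniform in $R$ (valid since $s>2$), finite nets of $\mathcal K_1,\mathcal K_2$ made of $C_c^\infty$ functions whose products vanish once the translated supports separate, and an approximation error of order $\epsilon$. Your version simply spells out the bilinear expansion and the interchanged and componentwise cases, which the paper leaves implicit.
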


\begin{proof}
Multiplication is continuous from $H^s\times H^s$ to $H^{s-1}$, and from
$H^s\times H^{s-1}$ to $H^{s-1}$, because $s>2$ in dimension two.  Given
$\eta>0$, choose finite $\eta$-nets for $\mathcal K_1$ and $\mathcal K_2$
whose elements lie in $C_c^\infty$.  For sufficiently large $R$, every
compactly supported element of the first finite net is disjoint from every
translated element of the second, so the corresponding products vanish.
The Sobolev product estimates bound the error made in replacing the original
functions by the finite-net elements by $C\eta$, uniformly over the two
compact sets.  Letting first $R\to\infty$ and then $\eta\to0$ proves
\eqref{eq:translate-product-limit}.
\end{proof}

\subsection{Finite-time far-field decoupling}

\begin{proposition}[Far-field superposition of two smooth Euler flows]
\label{prop:far-field-superposition}
Let $g_0,h_0\in C_{c,\sigma}^\infty(\R^2)$ and let
\[
 g(t)=\cS_t^\infty g_0,
 \qquad
 h(t)=\cS_t^\infty h_0.
\]
Fix $T>0$, an integer $m\ge3$, and a unit vector $e$.  Let $U_R$ be the
classical Euler solution with initial datum
\begin{equation*}
 U_R(0)=g_0+\mathsf T_{Re}h_0.
\end{equation*}
Then
\begin{equation}\label{eq:far-superposition-convergence}
 \sup_{0\le t\le T}
 \|U_R(t)-g(t)-\mathsf T_{Re}h(t)\|_{H^m}
 \longrightarrow0
 \qquad(R\to\infty).
\end{equation}
\end{proposition}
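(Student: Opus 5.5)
We will work at the level of vorticity and close a high-order energy estimate for the difference. Set $V_R(t)=g(t)+\mathsf T_{Re}h(t)$ and $W_R=U_R-V_R$. Since $\mathsf T_{Re}h$ is again an Euler solution (translation invariance), $V_R$ satisfies Euler up to the cross terms:
\[
 \partial_tV_R+V_R\cdot\nabla V_R+\nabla P=-\bigl(g\cdot\nabla\mathsf T_{Re}h+\mathsf T_{Re}h\cdot\nabla g\bigr)=:-\mathcal F_R .
\]
Hence
\[
 \partial_tW_R+U_R\cdot\nabla W_R+W_R\cdot\nabla V_R+\nabla\Pi=\mathcal F_R,\qquad \diver W_R=0,\qquad W_R(0)=0 .
\]
The plan is to show $\sup_{[0,T]}\|\mathcal F_R\|_{H^m}\to0$ and then run a Gronwall bootstrap for $\|W_R\|_{H^m}$ with constants independent of $R$.

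\emph{Step 1: the forcing.} The families $\{g(t)\}_{t\in[0,T]}$ and $\{h(t)\}_{t\in[0,T]}$ are continuous images of $[0,T]$ in $H^{m+2}$. Here we use that smooth compactly supported data have classical solutions in every $H^k$, and Lemma~\ref{lem:moments} gives $W^{k,1}\cap W^{k,\infty}\subset H^k$ decay for the velocities. These families are therefore compact sets in $H^{s}$ with $s=m+2>2$. Lemma~\ref{lem:translate-product} applied with $s=m+2$ gives
\[
 \sup_{t\le T}\bigl(\|g\,\nabla\mathsf T_{Re}h\|_{H^{m+1}}+\|\mathsf T_{Re}h\,\nabla g\|_{H^{m+1}}\bigr)\to0 .
\]
In particular $\varepsilon_R:=\sup_{t\le T}\|\mathcal F_R(t)\|_{H^m}\to0$.

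\emph{Step 2: energy estimate.} Apply $D^\alpha$ with $|\alpha|\le m$, pair with $D^\alpha W_R$, and drop the pressure by $\diver W_R=0$. Write $U_R=V_R+W_R$. The transport term gives the usual Kato–Ponce commutator bound
\[
 C\bigl(\|\nabla V_R\|_{L^\infty}+\|V_R\|_{H^{m}}+\|\nabla W_R\|_{L^\infty}\bigr)\|W_R\|_{H^m}^2+C\|W_R\|_{H^m}^2\|V_R\|_{H^{m+1}} .
\]
To avoid the loss of one derivative, we use instead the form $W_R\cdot\nabla U_R-W_R\cdot\nabla W_R$, or simply estimate $\|W_R\cdot\nabla V_R\|_{H^m}\le C\|W_R\|_{H^m}\|V_R\|_{H^{m+1}}$ using $m\ge3>1+d/2$. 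The key point is that $\|V_R(t)\|_{H^{m+1}}\le\|g(t)\|_{H^{m+1}}+\|h(t)\|_{H^{m+1}}$ is bounded uniformly in $R$ and $t\le T$. By Sobolev embedding, $\|\nabla W_R\|_{L^\infty}\le C\|W_R\|_{H^m}$. This yields
\[
 \frac{\dd}{\dd t}\|W_R\|_{H^m}\le C_0\|W_R\|_{H^m}+C\|W_R\|_{H^m}^2+\varepsilon_R ,
\]
with $C_0,C$ independent of $R$.

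\emph{Step 3: bootstrap.} On the maximal interval where $\|W_R\|_{H^m}\le1$, Gronwall gives $\|W_R(t)\|_{H^m}\le\varepsilon_RTe^{(C_0+C)T}$. For $R$ large this is $<1/2$, so the interval is all of $[0,T]$ (the classical solution $U_R$ is global and smooth, so continuity in time of the $H^m$ norm is available). This proves \eqref{eq:far-superposition-convergence}.

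The main obstacle is Step 1. We must make sure the cross forcing is genuinely small in $H^m$ uniformly in time, despite the velocities having only algebraic tails: they are not compactly supported, since $g$ and $h$ decay only like $|x|^{-3}$ by Lemma~\ref{lem:moments}. Disjointness of supports is therefore not available. This is exactly where the compactness and finite-net argument of Lemma~\ref{lem:translate-product} is needed, and the one thing to check is time-continuity of $t\mapsto g(t),h(t)$ in $H^{m+2}$, which follows from classical well-posedness.
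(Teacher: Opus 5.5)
Your argument is correct and follows the paper's proof in all essentials. The shared steps are the approximate solution $g+\mathsf T_{Re}h$, smallness of the cross defect from compactness of the two trajectories together with Lemma~\ref{lem:translate-product}, and an $H^m$ energy estimate plus Gronwall for the difference (the defect actually enters the equation for $V_R$ with a plus sign, which is immaterial since only its norm is used). The one variation is in how $U_R$ is controlled uniformly in $R$: the paper first derives an $R$-uniform $H^{m+1}$ bound on $U_R$ from vorticity conservation and the logarithmic Beale--Kato--Majda estimate, so that its difference inequality is linear, whereas you expand $U_R=V_R+W_R$ and close the resulting quadratic inequality by a continuity argument on $\|W_R\|_{H^m}\le1$, which avoids the logarithmic estimate altogether.
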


\begin{proof}
Put $h_R=\mathsf T_{Re}h$ and $a_R=g+h_R$.  In Leray-projected form,
$a_R$ is an approximate Euler solution with defect
\begin{equation*}
 F_R=\mathbb P\bigl(g\cdot\nabla h_R+h_R\cdot\nabla g\bigr),
\end{equation*}
where $\mathbb P$ is the Leray projector.  The trajectories
\[
 \{g(t):0\le t\le T\},\qquad
 \{h(t):0\le t\le T\}
\]
are compact in $H^{m+1}$, since the two solutions are smooth and continuous
in time with values in that space.  Lemma~\ref{lem:translate-product} and
the boundedness of $\mathbb P$ on Sobolev spaces imply
\begin{equation}\label{eq:far-defect-small}
 \|F_R\|_{L^1(0,T;H^m)}\longrightarrow0.
\end{equation}

The initial norms
$\|g_0+\mathsf T_{Re}h_0\|_{H^{m+1}}$ and the corresponding vorticity
$L^\infty$ norms are bounded independently of $R$.  To make the uniformity
explicit, let $\Omega_R=\curl U_R$.  Conservation of vorticity gives
\[
 \|\Omega_R(t)\|_{L^\infty}
 =\|\curl g_0+\mathsf T_{Re}\curl h_0\|_{L^\infty}
 \le \|\curl g_0\|_{L^\infty}+\|\curl h_0\|_{L^\infty}.
\]
Combining the standard $H^{m+1}$ energy inequality with the logarithmic
two-dimensional Euler estimate \cite{BealeKatoMajda1984,MajdaBertozzi}
\[
 \|\nabla U_R\|_{L^\infty}
 \le C\Bigl(1+\|\Omega_R\|_{L^\infty}
       \log\bigl(e+\|U_R\|_{H^{m+1}}\bigr)\Bigr)
\]
and solving the resulting scalar differential inequality yields a finite
double-exponential bound depending on $T,m,g_0,h_0$ but not on $R$.
Since translations preserve Sobolev norms, the corresponding bound for
$a_R=g+h_R$ is also uniform in $R$.  Consequently,
\begin{equation}\label{eq:far-uniform-high}
 \sup_{R\ge1}\sup_{0\le t\le T}\|U_R(t)\|_{H^{m+1}}
 +\sup_{0\le t\le T}\|a_R(t)\|_{H^{m+1}}
 \le C_{T,m,g_0,h_0}.
\end{equation}
Let $z_R=U_R-a_R$.  Since $z_R(0)=0$, subtraction of the exact and
approximate equations yields
\begin{equation*}
 \partial_tz_R
 +\mathbb P\bigl(U_R\cdot\nabla z_R+z_R\cdot\nabla a_R\bigr)
 =-F_R.
\end{equation*}
The usual $H^m$ Euler difference estimate, together with
\eqref{eq:far-uniform-high}, gives
\begin{equation*}
 \frac{\dd}{\dd t}\|z_R(t)\|_{H^m}
 \le C_{T,m,g_0,h_0}\|z_R(t)\|_{H^m}
      +\|F_R(t)\|_{H^m}.
\end{equation*}
Gronwall's inequality and \eqref{eq:far-defect-small} prove
\eqref{eq:far-superposition-convergence}.
\end{proof}

\begin{remark}[Order of choices]
The constant in \eqref{eq:far-uniform-high} may depend arbitrarily on the
fixed module $h_0$, including all of its high Sobolev norms.  This is
harmless.  In applications one first chooses $h_0$ from
Theorem~\ref{thm:zero}, then chooses the separation $R$.  No uniform
comparison between a Bonami--Poornima gain and a high Sobolev norm occurs.
\end{remark}

\subsection{Nowhere local boundedness of the smooth flow}

\begin{theorem}[Local unboundedness in every endpoint neighborhood]
\label{thm:every-neighborhood}
Fix $T>0$.  For every nonempty open set $\mathcal O\subset\cX$ and every
$L>0$, there exists $U_0\in\mathcal O\cap C_{c,\sigma}^\infty(\R^2)$ such
that
\begin{equation*}
 \|\cS_T^\infty U_0\|_{\dot W^{3,1}}>L.
\end{equation*}
Equivalently, the time-$T$ classical Euler map is not locally bounded in
any nonempty open subset of $\cX$.
\end{theorem}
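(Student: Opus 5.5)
The plan is to combine the density of smooth compactly supported data (Lemma~\ref{lem:X-density}), the zero-background inflation of Theorem~\ref{thm:zero}, and the far-field superposition of Proposition~\ref{prop:far-field-superposition}.

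First, since $\mathcal O$ is open and nonempty, Lemma~\ref{lem:X-density} gives $g_0\in C_{c,\sigma}^\infty(\R^2)$ and $\delta>0$ with $\overline B_{\cX}(g_0,2\delta)\subset\mathcal O$. Let $g(t)=\cS_t^\infty g_0$. By Lemma~\ref{lem:moments}, applied to the compactly supported stream function of $g_0$ (which exists because $g_0$ is compactly supported and divergence-free), we have $g(T)\in W^{3,1}$. Put $\Gamma:=\|g(T)\|_{\dot W^{3,1}}<\infty$.

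Next, apply Theorem~\ref{thm:zero} with this $T$, with size $\delta$, and with target $L+\Gamma+2$. This yields a fixed module $h_0\in C_{c,\sigma}^\infty$ with $\|h_0\|_{W^{3,1}}<\delta$ and $\|h(T)\|_{\dot W^{3,1}}>L+\Gamma+2$, where $h(t)=\cS_t^\infty h_0$. Since $h(T)\in W^{3,1}$ and $D^3h(T)\in L^1$, we can fix a radius $\varrho$ with $\int_{B_\varrho}|D^3h(T)|>L+\Gamma+1$.

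Now set $U_0=g_0+\mathsf T_{Re}h_0$ for a large $R$. Translation invariance gives $\|U_0-g_0\|_{W^{3,1}}<\delta$, so $U_0\in\mathcal O\cap C_{c,\sigma}^\infty$ for every $R$. Choose an integer $m\ge 5$. Proposition~\ref{prop:far-field-superposition} gives $\|U_R(T)-g(T)-\mathsf T_{Re}h(T)\|_{H^m}\to0$. On the fixed-size ball $B:=Re+B_\varrho$, Cauchy--Schwarz on a set of bounded area bounds the $L^1(B)$ norm of $D^3$ of this difference by $C\varrho\|\cdot\|_{H^3}$, so that contribution tends to $0$.

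This $H^m$ control cannot be used globally, because $H^m$ does not control $L^1$ at infinity. Instead we localize the lower bound to $B$:
\begin{equation*}
\|U_R(T)\|_{\dot W^{3,1}}\ge\int_B|D^3U_R(T)|\ge\int_{B_\varrho}|D^3h(T)|-\int_B|D^3g(T)|-o(1).
\end{equation*}
Since $D^3g(T)\in L^1$ and $B$ escapes to infinity, $\int_B|D^3g(T)|\to0$ as $R\to\infty$. For large $R$ the right-hand side exceeds $L+\Gamma+1-1>L$. We don't even need the $\Gamma$ margin.

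The main obstacle is this localization step. Global convergence in $H^m$ says nothing about $\dot W^{3,1}$ on all of $\R^2$. The point that makes it work is that the amplified $L^1$ mass of $D^3h(T)$ is captured on a fixed bounded ball (using integrability of $D^3h(T)$), which then travels with the translate and escapes the integrable tail of $D^3g(T)$. The order of choices is $g_0$, $\delta$, $h_0$, $\varrho$, and finally $R$, so no uniformity in the bad module $h_0$ is required. The equivalent formulation, that the time-$T$ map is not locally bounded on any nonempty open subset of $\cX$, is immediate because $L$ is arbitrary.
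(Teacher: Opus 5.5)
Your proposal is correct and matches the paper's proof step by step: you approximate by a smooth base $g_0$, take a module $h_0$ from Theorem~\ref{thm:zero}, translate it to $Re$, apply Proposition~\ref{prop:far-field-superposition}, and localize the lower bound to a translated bounded ball, where the $L^1$ norm of the $D^3$ error is controlled by its $L^2$ norm. The only difference is minor. You let $\int_{Re+B_\varrho}|D^3g(T)|\to0$ using integrability of $D^3g(T)$, whereas the paper bounds this term by $\|g(T)\|_{\dot W^{3,1}}$ and absorbs it into the margin $L+B+4$; both arguments work.
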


\begin{proof}
Choose $f\in\cX$ and $\rho>0$ such that
$B_\cX(f,\rho)\subset\mathcal O$.  By
Lemma~\ref{lem:X-density}, choose
$g_0\in C_{c,\sigma}^\infty$ with
\begin{equation}\label{eq:base-approximation}
 \|g_0-f\|_{W^{3,1}}<\rho/3.
\end{equation}
Let $g(T)=\cS_T^\infty g_0$ and set
$B=\|g(T)\|_{\dot W^{3,1}}<\infty$.  Theorem~\ref{thm:zero} gives
$h_0\in C_{c,\sigma}^\infty$ such that
\begin{equation}\label{eq:module-choice}
 \|h_0\|_{W^{3,1}}<\rho/3,
 \qquad
 \|h(T)\|_{\dot W^{3,1}}>L+B+4,
 \qquad h(T)=\cS_T^\infty h_0.
\end{equation}
Choose a bounded ball $K\subset\R^2$ so large that
\begin{equation*}
 \|h(T)\|_{\dot W^{3,1}(K)}>L+B+3.
\end{equation*}
For a unit vector $e$, let $U_R$ solve Euler with datum
$U_{0,R}=g_0+\mathsf T_{Re}h_0$ and put $K_R=Re+K$.  Translation invariance
and \eqref{eq:base-approximation}--\eqref{eq:module-choice} give
\begin{equation*}
 \|U_{0,R}-f\|_{W^{3,1}}<2\rho/3,
\end{equation*}
so $U_{0,R}\in\mathcal O$ for every $R$.

By Proposition~\ref{prop:far-field-superposition} with $m=3$, for all
sufficiently large $R$,
\begin{equation*}
 \|U_R(T)-g(T)-\mathsf T_{Re}h(T)\|_{\dot W^{3,1}(K_R)}<1,
\end{equation*}
where we used
$\|D^3z\|_{L^1(K_R)}\le |K|^{1/2}\|D^3z\|_{L^2}$.
Consequently,
\begin{align*}
 \|U_R(T)\|_{\dot W^{3,1}}
 &\ge \|U_R(T)\|_{\dot W^{3,1}(K_R)}\\
 &\ge \|h(T)\|_{\dot W^{3,1}(K)}
      -\|g(T)\|_{\dot W^{3,1}(K_R)}-1\\
 &>L+B+3-B-1>L.
\end{align*}
Taking $U_0=U_{0,R}$ proves the theorem.
\end{proof}

\subsection{Prescribed-time diagonal condensation}
\label{sec:generic-condensation}

It remains to condense the fixed-time open dense sets into one datum.  We
record the elementary nested-ball selection used for this purpose.

\begin{lemma}[Constructive nested-ball selection]
\label{lem:nested-ball-selection}
Let $Y$ be a separable complete metric space, let
$(d_\ell)_{\ell\in\N}$ be a fixed dense sequence in $Y$, and let
$(O_k)_{k\in\N}$ be open dense subsets of $Y$.  Given $y\in Y$ and
$\delta>0$, one can recursively construct indices $\ell_k,m_k\in\N$ and
closed balls
\begin{equation}\label{eq:abstract-nested-balls}
 B_k=\overline B_Y(d_{\ell_k},2^{-m_k})
 \subset\operatorname{int}(B_{k-1})\cap O_k,
 \qquad m_k\ge k,
\end{equation}
starting from $B_0=\overline B_Y(y,\delta/2)$.  The centers converge to a
point $y_*$ satisfying
\begin{equation}\label{eq:abstract-nested-limit}
 y_*\in B_Y(y,\delta)\cap\bigcap_{k=1}^\infty O_k,
 \qquad
 d_Y(d_{\ell_k},y_*)\le2^{-k}.
\end{equation}
\end{lemma}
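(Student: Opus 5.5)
The plan is the standard Baire-type nested-ball argument, made explicit by always choosing centers from the fixed dense sequence and radii of dyadic form.

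\emph{Recursive step.} Set $B_0=\overline B_Y(y,\delta/2)$ and suppose $B_{k-1}$ has been constructed. Every closed ball $\overline B_Y(c,\varrho)$ with $\varrho>0$ contains the open ball $B_Y(c,\varrho)$, so $\operatorname{int}(B_{k-1})$ is a nonempty open set. Since $O_k$ is open and dense, the set $W_k:=\operatorname{int}(B_{k-1})\cap O_k$ is nonempty and open. Pick any point $p\in W_k$ and $\varepsilon>0$ with $B_Y(p,4\varepsilon)\subset W_k$. By density of $(d_\ell)$, choose the least index $\ell_k$ with $d_Y(d_{\ell_k},p)<\varepsilon$. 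Then choose the least integer $m_k\ge k$ with $2^{-m_k}<\varepsilon$. The triangle inequality gives
\[
\overline B_Y(d_{\ell_k},2^{-m_k})\subset B_Y(p,2\varepsilon)\subset W_k ,
\]
which is \eqref{eq:abstract-nested-balls}. Taking least indices makes the selection deterministic, and this is the only sense of ``constructive'' needed. The only care point is the case $k=1$, where the center $y$ of $B_0$ need not belong to the dense sequence. This is harmless, because only the interior of $B_0$ is used.

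\emph{Convergence.} For $j\ge k$ we have $B_j\subset B_k$, so $d_{\ell_j}\in B_k$ and hence $d_Y(d_{\ell_j},d_{\ell_k})\le 2^{-m_k}\le 2^{-k}$. The centers therefore form a Cauchy sequence, and completeness gives a limit $y_*$. Each $B_k$ is closed and contains the tail of the sequence, so $y_*\in B_k$ for every $k$. Consequently:
\begin{itemize}
\item $d_Y(d_{\ell_k},y_*)\le 2^{-m_k}\le 2^{-k}$;
\item $y_*\in O_k$ for every $k$, since $B_k\subset O_k$;
\item $y_*\in B_1\subset\operatorname{int}(B_0)$, and $\operatorname{int}(B_0)\subset\overline B_Y(y,\delta/2)\subset B_Y(y,\delta)$.
\end{itemize}
This yields \eqref{eq:abstract-nested-limit}.

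I expect no real obstacle here. The one genuine point is to use the interior of the previous closed ball rather than the ball itself, so that openness and density of $O_k$ can be applied. Separability is used only to fix the countable dense sequence from which the centers are drawn.
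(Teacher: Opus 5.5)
Your argument is correct and is essentially the paper's proof: nested closed dyadic balls centered at dense-sequence points inside $\operatorname{int}(B_{k-1})\cap O_k$, Cauchy centers, and a limit that lies in every $B_k$. The one difference is that the paper takes $\ell_k$ least with $d_{\ell_k}\in E_k$ directly, and then $m_k\ge k$ least with the closed ball contained in $E_k$. This avoids your auxiliary point $p$ and margin $\varepsilon$. Because $p$ and $\varepsilon$ are chosen arbitrarily, your selection is not fully fixed by least-index rules, so your remark that it is deterministic is slightly inaccurate.
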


\begin{proof}
Assume $B_{k-1}$ has been chosen.  The set
$E_k=\operatorname{int}(B_{k-1})\cap O_k$ is nonempty and open.  Let
$\ell_k$ be the least index with $d_{\ell_k}\in E_k$, and let $m_k\ge k$
be the least integer such that
$\overline B_Y(d_{\ell_k},2^{-m_k})\subset E_k$.  Density and openness show
that both indices exist and give \eqref{eq:abstract-nested-balls}.  If
$j>k$, then $d_{\ell_j}\in B_k$, so
$d_Y(d_{\ell_j},d_{\ell_k})\le2^{-m_k}\le2^{-k}$.  The centers are therefore
Cauchy.  Their limit $y_*$ exists by completeness and belongs to every
closed ball $B_k$.  This proves \eqref{eq:abstract-nested-limit}.
\end{proof}

\begin{proof}[Proof of Theorem~\ref{thm:strong}]
Fix $t>0$ and $N\in\N$, and define
\begin{equation*}
 \cU_{t,N}
 :=\{u_0\in\cX:\mathfrak V_3(\cS_tu_0)>N\}.
\end{equation*}
By Lemma~\ref{lem:V3-lsc}, $\cU_{t,N}$ is open in $\cX$.  It is dense by
Theorem~\ref{thm:every-neighborhood}: every nonempty open subset of $\cX$
contains a compactly supported smooth datum whose classical output has
arbitrarily large $\dot W^{3,1}$ seminorm, and for smooth outputs
$\mathfrak V_3$ is equivalent to that seminorm.  Set
\begin{equation*}
 \cG_t:=\bigcap_{N=1}^\infty\cU_{t,N}
\end{equation*}
and
\begin{equation*}
 \cG:=\bigcap_{q\in\mathbb Q_+}\cG_q,
 \qquad \mathbb Q_+:=\mathbb Q\cap(0,\infty).
\end{equation*}
Both are $G_\delta$ sets.  To prove density without invoking the Baire
category theorem, fix $\bar u_0\in\cX$ and $\delta>0$, and enumerate
\begin{equation}\label{eq:enumerate-time-thresholds}
 \mathbb Q_+\times\N=\{(q_k,N_k):k\in\N\}
\end{equation}
once and for all.  Since $\cX$ is separable and
Lemma~\ref{lem:X-density} gives the density of
$C_{c,\sigma}^\infty$, fix a dense sequence
$(d_\ell)\subset C_{c,\sigma}^\infty$.  Apply
Lemma~\ref{lem:nested-ball-selection} with
\[
 Y=\cX,\qquad O_k=\cU_{q_k,N_k},\qquad y=\bar u_0.
\]
It produces smooth centers $u_0^{(k)}=d_{\ell_k}$ whose limit satisfies
\begin{equation}\label{eq:constructed-single-datum}
 u_0^*:=\lim_{k\to\infty}u_0^{(k)}\in\cG,\qquad
 \|u_0^*-\bar u_0\|_{W^{3,1}}<\delta,\qquad
 \|u_0^{(k)}-u_0^*\|_{W^{3,1}}\le2^{-k}.
\end{equation}
Consequently $\cG$ meets every open ball, and hence is dense.  This proves
that $\cG$ is a dense $G_\delta$ subset of $\cX$ and gives the asserted
constructive approximation.

For every $u_0\in\cG_t$,
\begin{equation}\label{eq:V3-infinite-fixed-time}
 \mathfrak V_3(\cS_tu_0)=\infty.
\end{equation}
Indeed, membership in every $\cU_{t,N}$ makes
$\mathfrak V_3(\cS_tu_0)$ larger than every integer.  If $u_0\in\cG$, then
\eqref{eq:V3-infinite-fixed-time} holds at every
$q\in\mathbb Q_+$.  Since the sum in \eqref{eq:V3-functional} has finitely
many terms, at least one third derivative of $u(q)$ fails to be a finite
Radon measure.  Thus $u(q)\notin W^{3,1}$ at every positive rational time,
as asserted in part~\textup{(i)}.

It remains to prove the essential-supremum statement.  By
Proposition~\ref{prop:endpoint-flow}, $t\mapsto u(t)$ is continuous in
$L^2$ and hence in distributions.  Lemma~\ref{lem:V3-lsc} therefore shows
that
\begin{equation*}
 t\longmapsto\mathfrak V_3(u(t))
\end{equation*}
is lower semicontinuous.  Hence, for every $N\ge1$, the set
\[
 \mathcal O_N(u_0):=
 \{t>0:\mathfrak V_3(u(t))>N\}
\]
is open.  It contains every positive rational time and is therefore dense
in $(0,\infty)$.  Consequently, the set
\[
 \mathcal T(u_0):=\bigcap_{N=1}^\infty\mathcal O_N(u_0)
 =\{t>0:\mathfrak V_3(u(t))=\infty\}
\]
is a dense $G_\delta$ subset of $(0,\infty)$ containing $\mathbb Q_+$.
At every $t\in\mathcal T(u_0)$, at least one third derivative of $u(t)$ is
not a finite Radon measure; in particular, $u(t)\notin W^{3,1}$.  This proves
\eqref{eq:residual-bad-times}--\eqref{eq:residual-W31-loss}.

Let $I\subset(0,\infty)$ be a nonempty open interval and let $N\ge1$.
Choose $q\in I\cap\mathbb Q_+$.  Since
$\mathfrak V_3(u(q))=\infty$, the set
\begin{equation*}
 \{t\in I:\mathfrak V_3(u(t))>N\}
\end{equation*}
is an open neighborhood of $q$ relative to $I$, and therefore has positive
Lebesgue measure.  At every such time, either $u(t)\notin W^{3,1}$, in which
case its extended $W^{3,1}$ norm is infinite, or
$u(t)\in W^{3,1}$ and its top-order seminorm is larger than a fixed constant
multiple of $N$.  Since $N$ is arbitrary, this proves
\eqref{eq:strong-essential-inflation}.

The construction \eqref{eq:enumerate-time-thresholds}--
\eqref{eq:constructed-single-datum}, applied inside an arbitrary open ball,
proves the final constructive-density assertion.
\end{proof}

\begin{corollary}[No endpoint solution bounded in time]
\label{cor:no-Linfty-endpoint-solution}
For every $u_0\in\cG$ and every $T>0$, the canonical solution of
Proposition~\ref{prop:endpoint-flow} does not belong to
$L^\infty(0,T;W^{3,1})$.  In particular, there is no finite-energy
Yudovich-class Euler solution with the same datum that belongs to
$L^\infty(0,T;W^{3,1})$.
\end{corollary}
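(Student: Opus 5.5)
The first claim follows from Theorem~\ref{thm:strong}(ii) once we set up the comparison with $\mathfrak V_3$. Fix $u_0\in\cG$ and $T>0$, and let $u$ be the canonical solution of Proposition~\ref{prop:endpoint-flow}. Apply \eqref{eq:strong-essential-inflation} with $I=(0,T)$. It gives $\operatorname*{ess\,sup}_{0<t<T}\|u(t)\|_{W^{3,1}}=\infty$, in the extended sense where the norm is $+\infty$ off $W^{3,1}$.

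To turn this into non-membership in $L^\infty(0,T;W^{3,1})$, I will argue by contradiction. Suppose $u$ agreed almost everywhere in $t$ with some $\tilde u\in L^\infty(0,T;W^{3,1})$. Then there would be a constant $C$ and a full-measure set $E\subset(0,T)$ with $u(t)=\tilde u(t)\in W^{3,1}$ and $\|u(t)\|_{W^{3,1}}\le C$ for every $t\in E$. That contradicts the infinite essential supremum. The only care needed is in how essential supremum and extended values interact: the proof of Theorem~\ref{thm:strong}(ii) supplies, for every $N$, a set of positive measure in $(0,T)$ on which $\mathfrak V_3(u(t))>C_3N$. By \eqref{eq:V3-W31-comparison}, every such $t$ has either $u(t)\notin W^{3,1}$ or $\|u(t)\|_{W^{3,1}}>N$. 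Taking $N>C$, this positive-measure set must meet $E$, which gives the contradiction.

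The ``in particular'' statement reduces to uniqueness. Let $\tilde u$ be any finite-energy Yudovich-class solution with datum $u_0$. Since $u_0\in\cX$, its vorticity lies in $W^{2,1}\subset L^1\cap L^\infty$, as recorded in the proof of Proposition~\ref{prop:endpoint-flow}. Yudovich uniqueness of finite-energy solutions with bounded, integrable vorticity then gives $\tilde u=u$ on $[0,T]$. So $\tilde u$ inherits the infinite essential supremum, and the first part applies. I also need to rule out that $\tilde u$ belongs to $L^\infty(0,T;W^{3,1})$ only after modification on a null set of times. This is already covered by the first part, which is invariant under such modifications.

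The main obstacle is confirming that the uniqueness class used in Proposition~\ref{prop:endpoint-flow} really contains every competitor in the statement, namely finite-energy solutions whose vorticity lies in $L^\infty_tL^1\cap L^\infty$. I will check this by appealing to the standard Yudovich theory cited there. For a hypothetical solution in $L^\infty(0,T;W^{3,1})$ this is automatic, because such a solution has $\curl\tilde u\in L^\infty(0,T;W^{2,1})$.
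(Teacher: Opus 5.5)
Your proposal is correct and follows the paper's own argument. The first assertion is exactly the essential-supremum inflation \eqref{eq:strong-essential-inflation} of Theorem~\ref{thm:strong}(ii) with $I=(0,T)$, and the second holds because any competitor in $L^\infty(0,T;W^{3,1})$ has vorticity in $L^\infty(0,T;L^1\cap L^\infty)$ and so coincides with the canonical solution by Yudovich uniqueness. Your extra care about null-set modifications and the passage through $\mathfrak V_3$ and \eqref{eq:V3-W31-comparison} only makes explicit what the paper's two-line proof leaves implicit.
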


\begin{proof}
The first assertion is \eqref{eq:strong-essential-inflation}.  Any
finite-energy solution in $L^\infty(0,T;W^{3,1})$ has bounded vorticity and
lies in the Yudovich uniqueness class; it must agree with the canonical
solution, contradicting the first assertion.
\end{proof}

The condensation uses only finite smooth amplifiers; no infinite packet
hierarchy is constructed within a single classical solution.

\appendix

\section{Transport estimates used in the bootstraps}
\label{app:transport}

We record a standard form of the high-order transport estimate, emphasizing
the dependence used in Sections~\ref{sec:tools} and
\ref{sec:decoupling}.

\begin{lemma}[High-order transport estimate]
\label{lem:transport-Hs}
Let $s\ge3$ be an integer.  Suppose $b$ is smooth and divergence-free and
$q$ solves
\[
 \partial_tq+b\cdot\nabla q=F.
\]
If $q$ is supported in a fixed compact set $E$ and $b=b_0+b_1$, where
$b_0$ is a fixed smooth field and $b_1$ is smooth on a neighborhood of $E$,
then
\begin{align*}
 \frac{\dd}{\dd t}\|q(t)\|_{H^s}
 \le C\bigl(&1+\|\nabla b_0\|_{W^{s-1,\infty}(E_1)}
              +\|\nabla b_1\|_{W^{s-1,\infty}(E_1)}\bigr)
              \|q(t)\|_{H^s}\\
 &+C\|F(t)\|_{H^s},
\end{align*}
where $E_1$ is any fixed neighborhood of $E$.  More generally, the
$W^{s-1,\infty}$ norms may be replaced by the standard Sobolev product
norms supplied by $H^{s+1}$ control of the velocity.
\end{lemma}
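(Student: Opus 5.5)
The plan is the standard commutator energy estimate. Fix a multi-index $\alpha$ with $|\alpha|\le s$, apply $\partial^\alpha$ to the equation, multiply by $\partial^\alpha q$, and integrate. Because $\diver b=0$, the top-order transport term vanishes:
\[
 \int b\cdot\nabla\partial^\alpha q\,\partial^\alpha q=\tfrac12\int b\cdot\nabla|\partial^\alpha q|^2=0.
\]
What remains is
\[
 \tfrac12\tfrac{\dd}{\dd t}\|\partial^\alpha q\|_{L^2}^2
 =-\int[\partial^\alpha,b\cdot\nabla]q\,\partial^\alpha q
 +\int\partial^\alpha F\,\partial^\alpha q .
\]
The forcing term is bounded by Cauchy--Schwarz, giving $\|F\|_{H^s}\|q\|_{H^s}$.

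For the commutator, expand by Leibniz:
\[
 [\partial^\alpha,b\cdot\nabla]q=\sum_{0<\beta\le\alpha}\binom{\alpha}{\beta}\partial^\beta b\cdot\nabla\partial^{\alpha-\beta}q .
\]
Every summand contains at least one derivative of $b$ and at most $s$ derivatives of $q$. Each summand is supported in $\supp q\subset E$, so only the values of $\partial^\beta b$ on $E$ enter. Bounding $\partial^\beta b$ in $L^\infty(E)$ for $1\le|\beta|\le s$ gives
\[
 \|[\partial^\alpha,b\cdot\nabla]q\|_{L^2}\le C\|\nabla b\|_{W^{s-1,\infty}(E)}\|q\|_{H^s}.
\]
Since $E\subset E_1$, this is bounded by the $E_1$ norm. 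Splitting $b=b_0+b_1$ and using the triangle inequality then produces the stated coefficient. The constant $1$ in the coefficient is harmless and is kept only for uniformity.

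Summing over $|\alpha|\le s$ yields
\[
 \tfrac12\tfrac{\dd}{\dd t}\|q\|_{H^s}^2\le C(\cdots)\|q\|_{H^s}^2+C\|F\|_{H^s}\|q\|_{H^s}.
\]
To divide by $\|q\|_{H^s}$, I would set $N_\nu=\sqrt{\|q\|_{H^s}^2+\nu^2}$, observe that $N_\nu'\le C(\cdots)N_\nu+C\|F\|_{H^s}$, and let $\nu\downarrow0$. This is the regularization already used in Lemma~\ref{lem:localized-response}.

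For the ``more generally'' clause, the pointwise bounds on $\partial^\beta b$ should be replaced by the Kato--Ponce/Moser commutator estimate. Apply it to $\chi b$, where $\chi\in C_c^\infty(E_1)$ equals $1$ on $E$, so that
\[
 \|[\partial^\alpha,\chi b\cdot\nabla]q\|_{L^2}\lesssim\|\nabla(\chi b)\|_{L^\infty}\|q\|_{H^s}+\|\chi b\|_{H^{s}}\|\nabla q\|_{L^\infty}.
\]
Then control $\|\nabla q\|_{L^\infty}$ and $\|\nabla (\chi b)\|_{L^\infty}$ by Sobolev embedding from $\|q\|_{H^s}$ and $\|\chi b\|_{H^{s+1}}$, using $s\ge3>2$ in two dimensions.

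The only delicate point is the localization. One must check that replacing $b$ by $\chi b$ does not change the equation on $\supp q$, which holds because $\chi=1$ on $E$ and $q$ vanishes off $E$. One must also note that $\chi b$ need not be divergence-free. Since $\diver(\chi b)=\nabla\chi\cdot b$ vanishes on $E$, the top-order term still integrates to zero, and no Leray correction is needed.
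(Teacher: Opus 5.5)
Your proposal is correct and follows essentially the same route as the paper. Both arguments differentiate to order $|\alpha|\le s$ and use $\diver b=0$ to cancel the top-order transport term. Both then expand the commutator into products $\partial^\beta b\cdot\nabla\partial^{\alpha-\beta}q$ supported in $E$ and close with H\"older/Moser (Kato--Ponce) bounds. Your $\sqrt{\|q\|_{H^s}^2+\nu^2}$ regularization and the cutoff $\chi b$ for the Sobolev-norm variant are details the paper leaves implicit, and both are handled correctly.
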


\begin{proof}
Apply $D^\alpha$, $|\alpha|\le s$, multiply by $D^\alpha q$, and integrate.
The principal transport contribution vanishes because $\diver b=0$.
The remaining commutator is a finite sum of products
$D^\beta b\,D^{\alpha-\beta+1}q$.  Since all derivatives of $q$ are
supported in $E$, only the local norms of $b$ on $E_1$ enter.  The result
follows from H\"older's inequality and the usual Moser estimates; see
\cite{KatoPonce1988,BahouriCheminDanchin2011}.
\end{proof}

\begin{lemma}[Difference estimate for two vorticity equations]
\label{lem:difference-Hs}
Let $\eta$ and $\zeta$ be compactly supported zero-mass vorticities solving
\begin{align*}
 \partial_t\eta+(b+aK*\eta+F)\cdot\nabla\eta&=0,\\
 \partial_t\zeta+(b+aK*\zeta)\cdot\nabla\zeta&=0,
\end{align*}
where $b(t,y)=M(t)y$ with bounded $M$, and $F$ is smooth,
divergence-free, and compactly supported.  Suppose the two vorticities remain
in a fixed ball and $\zeta$ is bounded in $H^s$, $s\ge6$.  On any interval where
$\|\eta-\zeta\|_{H^{s-1}}\le1$,
\begin{equation*}
 \frac{\dd}{\dd t}\|\eta-\zeta\|_{H^{s-1}}
 \le C\|\eta-\zeta\|_{H^{s-1}}+C\|F\|_{C^s}.
\end{equation*}
The constant depends on the fixed profile bounds, $a$, $M$, and the common
support radius.
\end{lemma}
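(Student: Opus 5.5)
We estimate the difference $d=\eta-\zeta$ by a direct energy method in $H^{s-1}$. Subtracting the two equations gives
\[
 \partial_td+(b+aK*\eta+F)\cdot\nabla d
 =-\bigl(aK*d+F\bigr)\cdot\nabla\zeta,
 \qquad d=\eta-\zeta,
\]
exactly as in \eqref{eq:d-equation}. The transport field $V=b+aK*\eta+F$ is divergence-free: $b=M(t)y$ with $\operatorname{tr}M=0$ (the case in all applications, and in any event $\diver b=\operatorname{tr}M$ only contributes a bounded multiple of the norm), while $K*\eta$ and $F$ are solenoidal. We apply Lemma~\ref{lem:transport-Hs} with $s-1$ in place of $s$, taking $b_0=b$ and $b_1=aK*\eta+F$ and letting $E$ be the fixed common ball.

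\emph{Transport coefficients.} The linear field contributes only $|M|$, since $\nabla b=M$ is constant and its higher derivatives vanish. For the self-field, Proposition~\ref{prop:BS-toolbox}(iii) (zero mass, common support) gives $\|K*\eta\|_{H^{s}}\le C(\|\eta\|_{H^{s-1}}+\|\eta\|_{L^1})\le C(\|\zeta\|_{H^{s-1}}+1)$ on the bootstrap interval, because $\|d\|_{H^{s-1}}\le1$ and the $L^1$ norm on a fixed ball is bounded by the $L^2$ norm. Since $s\ge6$, Sobolev embedding puts $\nabla K*\eta$ in $W^{s-3,\infty}$. This is not the full $W^{s-2,\infty}$ needed for the $s-1$ estimate, so we use the alternative form in Lemma~\ref{lem:transport-Hs}. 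The top commutator term $D^{s-1}(K*\eta)\cdot\nabla d$ is bounded in $L^2$ by $\|K*\eta\|_{H^{s-1}}\|\nabla d\|_{L^\infty}\lesssim\|d\|_{H^{s-1}}$, using $s-2>1$. The intermediate terms are handled by Moser/Kato--Ponce product estimates. Finally $F$ contributes $\|F\|_{C^{s}}$ on $E_1$, which we may also assume bounded by $1$ (otherwise the estimate is trivial after enlarging $C$, or it is absorbed since $F$ is small in applications).

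\emph{Forcing.} By the Moser estimate on a compact set,
\[
 \|(aK*d)\cdot\nabla\zeta\|_{H^{s-1}}\le Ca\|K*d\|_{W^{s-1,\infty}(E_1)}\|\zeta\|_{H^{s}},
\]
which in fact needs $\zeta\in H^s$. Here $\|K*d\|_{H^{s}}\le C\|d\|_{H^{s-1}}$ by Proposition~\ref{prop:BS-toolbox}(iii), again using the common support so that $\|d\|_{L^1}\lesssim\|d\|_{L^2}$. Since $H^{s}\cap L^\infty$ products suffice via $\|fg\|_{H^{s-1}}\lesssim\|f\|_{H^{s-1}}\|g\|_{L^\infty}+\|f\|_{L^\infty}\|g\|_{H^{s-1}}$, we obtain the bound $Ca\|d\|_{H^{s-1}}\|\zeta\|_{H^s}$. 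The term $F\cdot\nabla\zeta$ is bounded by $C\|F\|_{C^{s-1}}\|\zeta\|_{H^s}\le C\|F\|_{C^s}$.

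Combining these bounds in Lemma~\ref{lem:transport-Hs} and regularizing $\sqrt{\|d\|^2+\nu^2}$ gives the claimed differential inequality. The main obstacle is the derivative count: the transport commutator and the forcing $\nabla\zeta$ each lose a derivative. This loss is why the estimate is placed at level $s-1$ while $\zeta$ is bounded in $H^s$, and the argument must check that the top commutator involving $K*\eta$ is controlled by only $H^{s-1}$ information on $\eta$, which the Biot--Savart gain of one derivative supplies.
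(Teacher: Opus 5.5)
Your proposal is correct and follows the paper's proof. Like the paper, you subtract the equations and use Proposition~\ref{prop:BS-toolbox}\textup{(iii)} with zero mass and common support to bound $K*d$ and $K*\eta$ in $H^s$; you then control the forcing $(aK*d+F)\cdot\nabla\zeta$ by Moser estimates and apply Lemma~\ref{lem:transport-Hs} at order $s-1$. Your handling of the top commutator through $\|K*\eta\|_{H^{s-1}}\|\nabla d\|_{L^\infty}$ is actually more careful about the derivative count than the paper's one-line appeal.

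Two small cleanups. First, drop the intermediate display with $\|K*d\|_{W^{s-1,\infty}}$, since $H^s$ control of $K*d$ does not bound it; the tame product estimate you use next is the right tool. Second, you do not need to assume $\|F\|_{C^s}\le1$: on the interval where $\|d\|_{H^{s-1}}\le1$, the $F$-contribution to the transport coefficient gives $C\|F\|_{C^s}\|d\|_{H^{s-1}}\le C\|F\|_{C^s}$.
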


\begin{proof}
Set $d=\eta-\zeta$.  Subtracting the two equations gives
\[
 \partial_td+(b+aK*\eta+F)\cdot\nabla d
 =-(aK*d+F)\cdot\nabla\zeta.
\]
The difference has zero mass and fixed compact support.  Hence
Proposition~\ref{prop:BS-toolbox}\textup{(iii)} and the support bound imply
\[
 \|K*d\|_{H^s}\le C\|d\|_{H^{s-1}}.
\]
On the bootstrap interval,
$\|\eta\|_{H^{s-1}}\le\|\zeta\|_{H^{s-1}}+1$; applying the same
Biot--Savart estimate to $\eta$ therefore gives uniform Sobolev product
bounds for the transporting velocity.  Since $s\ge6$, the Moser estimates
yield
\[
 \|(aK*d+F)\cdot\nabla\zeta\|_{H^{s-1}}
 \le C\|d\|_{H^{s-1}}+C\|F\|_{C^s}.
\]
Lemma~\ref{lem:transport-Hs}, applied at order $s-1$, now gives the claimed
differential inequality.
\end{proof}

\section{Compact reduction and affine-profile estimates}
\label{app:profile-estimates}

\subsection{Compactly supported witnesses for the nonmultiplier theorem}
\label{app:compact-reduction}

\begin{lemma}[Compact-support reduction in $\dot W^{4,1}$]
\label{lem:compact-reduction}
Suppose that a degree-zero symbol $m\in C^\infty(\R^2\setminus\{0\})$
satisfies
\begin{equation}\label{eq:compact-test-bound}
 \|m(D)g\|_{\dot W^{4,1}}
 \le C\|g\|_{\dot W^{4,1}}
 \qquad\text{for all }g\in C_c^\infty(\R^2).
\end{equation}
Then the same estimate holds for every $g\in\mathcal S(\R^2)$ and $m$ is a
multiplier on the completion defining $\dot W^{4,1}$ modulo polynomials.
\end{lemma}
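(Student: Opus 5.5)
The argument has two steps: first pass from $C_c^\infty$ to Schwartz functions by a cutoff approximation in $\dot W^{4,1}$, then extend to the completion by density.

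\emph{Step 1: Schwartz functions.} Fix $g\in\mathcal S(\R^2)$. Since $\dot W^{4,1}$ only sees fourth derivatives, we may first subtract a polynomial of degree at most $3$. This is harmless: it does not change $\|g\|_{\dot W^{4,1}}$, and $m(D)$ applied to a polynomial is not defined anyway, so we simply work with $g$ itself.

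Take the truncations $g_R=\chi(\cdot/R)\,g$ with the same cutoff $\chi$ as in Lemma~\ref{lem:X-density}. Every derivative of $g$ decays rapidly, so Leibniz' rule gives
\[
 \|g_R-g\|_{\dot W^{4,1}}\le\sum_{k=0}^4 CR^{-k}\|D^{4-k}g\|_{L^1(|x|\ge R)}\longrightarrow0 .
\]
The same argument gives $g_R\to g$ in $\mathcal S$. Hence $\widehat{g_R}\to\widehat g$ in $\mathcal S$ as well.

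We next need $m(D)g_R\to m(D)g$ at least in $\mathcal S'$. Because $m$ is bounded and smooth away from the origin, $m\,\widehat{g_R}\to m\,\widehat g$ in $L^1$. This yields uniform convergence of $m(D)g_R\to m(D)g$, and therefore distributional convergence. The same holds for all fourth derivatives, since $\xi^\alpha m\,\widehat{g_R}\to\xi^\alpha m\,\widehat g$ in $L^1$.

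We then apply the lower semicontinuity of the total-variation functional $\|\cdot\|_{\mathcal M}$ under distributional convergence, exactly as in Lemma~\ref{lem:V3-lsc}, to each $\partial^\alpha m(D)g_R$ with $|\alpha|=4$. This gives
\[
 \sum_{|\alpha|=4}\|\partial^\alpha m(D)g\|_{\mathcal M}\le\liminf_R C\|g_R\|_{\dot W^{4,1}}=C\|g\|_{\dot W^{4,1}}.
\]
Thus each fourth derivative of $m(D)g$ is a finite measure, of total mass at most $C\|g\|_{\dot W^{4,1}}$.

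\emph{Main obstacle: measure versus $L^1$.} The bound above only controls measures. We must show these measures are absolutely continuous, so that the estimate \eqref{eq:compact-test-bound} holds with $L^1$ norms.

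The plan is to apply the Step 1 argument to the differences $g_R-g_{R'}$, which are themselves in $C_c^\infty$. This shows that $\partial^\alpha m(D)g_R$ is Cauchy in $L^1$. Its $L^1$ limit must coincide with the distributional limit $\partial^\alpha m(D)g$, so the latter lies in $L^1$. The bound \eqref{eq:compact-test-bound} then passes to the limit directly, with no appeal to lower semicontinuity needed at this stage.

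\emph{Step 2: extension to the completion.} Define $\dot W^{4,1}$ modulo polynomials as the completion of $\mathcal S$ (equivalently of $C_c^\infty$) under $\|D^4\cdot\|_{L^1}$. By Step 1, $m(D)$ is bounded on this dense class, so it extends uniquely by continuity to the completion. On any element where $m(D)$ already makes sense as a tempered distribution modulo polynomials, the extension agrees with it, by the same distributional-limit argument used in Step 1. This gives the multiplier property claimed in Lemma~\ref{lem:compact-reduction}, and hence the contradiction with Theorem~\ref{thm:BP} used in Proposition~\ref{prop:bad-profile}.
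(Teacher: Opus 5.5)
Your proposal is correct and follows the paper's proof. Both truncate to $g_R$, show via Leibniz that $g_R\to g$ in $\dot W^{4,1}$, and apply the hypothesis to $g_R-g_{R'}$ so that each $\partial^\alpha m(D)g_R$ is Cauchy in $L^1$. Both then identify the $L^1$ limit with $\partial^\alpha m(D)g$ through distributional convergence and extend by density to the completion.

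The measure and lower-semicontinuity detour in your Step 1 is superfluous, since the Cauchy argument already gives the $L^1$ bound directly. On the other hand, your explicit proof that $g_R\to g$ in $\mathcal S$, giving $\xi^\alpha m\widehat{g_R}\to\xi^\alpha m\widehat g$ in $L^1$, is a slightly more careful justification of the limit identification than the paper's appeal to continuity of multiplication by $m$.
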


\begin{proof}
Choose $\vartheta\in C_c^\infty$ with $\vartheta=1$ on $B_1$ and put
$g_R(x)=\vartheta(x/R)g(x)$.  Leibniz' rule gives
\[
 D^4(g_R-g)
 =\bigl(\vartheta(x/R)-1\bigr)D^4g
 +\sum_{1\le|\beta|\le4}
 R^{-|\beta|}(D^\beta\vartheta)(x/R)D^{4-\beta}g.
\]
The first term tends to zero in $L^1$ by dominated convergence, and the
remaining terms tend to zero because they are supported where
$|x|\simeq R$ while $g$ is rapidly decreasing.  Thus
$g_R\to g$ in $\dot W^{4,1}$ and in $\mathcal S'$.

Applying \eqref{eq:compact-test-bound} to $g_R-g_S$ shows that
$\partial^\alpha m(D)g_R$ is Cauchy in $L^1$ for every $|\alpha|=4$.
Multiplication by $m$ is continuous from $\mathcal S$ to $\mathcal S'$, so
the distributional limit is $\partial^\alpha m(D)g$.  Hence
\[
 \|m(D)g\|_{\dot W^{4,1}}
 \le C\|g\|_{\dot W^{4,1}}.
\]
The same Cauchy argument gives the extension to the homogeneous completion;
four derivatives remove the polynomial ambiguity.
\end{proof}

\subsection{Uniform nonlinear affine profiles}

\begin{proof}[Proof of Lemma~\ref{lem:profile-smooth-bounds}]
Write $Y(t)$ for a characteristic of \eqref{eq:nonlinear-profile}.  The
matrices $M_q(t)$ are uniformly bounded on
$[0,\tau]\times\overline{B_h}$.  Moreover, divergence-free transport
preserves the $L^1$ and $L^\infty$ norms of $\zeta_q^a$.  Therefore
Proposition~\ref{prop:BS-toolbox}\textup{(i)} gives
\[
 \sup_{q\in\overline{B_h},\ 0\le a\le A_0}
 \|w_q^a(t)\|_{L^\infty}\le C.
\]
Consequently,
\[
 \frac{\dd}{\dd t}|Y(t)|
 \le C|Y(t)|+A_0C,
\]
and Gronwall's inequality yields the uniform support radius
\eqref{eq:profile-support}.

For the high-order bound, differentiate the transport equation through
order $s$.  Standard commutator estimates give
\begin{equation}\label{eq:profile-Hs-ode}
 \frac{\dd}{\dd t}\|\zeta_q^a(t)\|_{H^s}
 \le C\bigl(1+a\|\nabla w_q^a(t)\|_{L^\infty}\bigr)
       \|\zeta_q^a(t)\|_{H^s}.
\end{equation}
The constant is uniform in $q$ because the spatial gradient of
$M_q(t)y$ is the bounded matrix $M_q(t)$.  For compactly supported smooth
vorticity in two dimensions, the logarithmic Biot--Savart estimate gives
\[
 \|\nabla w_q^a\|_{L^\infty}
 \le C\left(1+\|\zeta_q^a\|_{L^\infty}
       \log\bigl(e+\|\zeta_q^a\|_{H^s}\bigr)\right),
\]
where the constant is uniform because the support radius, $L^1$ norm, and
$L^\infty$ norm are already uniform.  Substitution into
\eqref{eq:profile-Hs-ode} gives an Osgood inequality with coefficients
independent of $(q,a)$.  It yields \eqref{eq:profile-Hs} on $[0,\tau]$.
The case $a=0$ is included.  See also
\cite{MajdaBertozzi,Chemin1998} for the logarithmic estimate.
\end{proof}

\begin{proof}[Proof of Proposition~\ref{prop:small-a}]
Set $d_q^a=\zeta_q^a-\zeta_q^0$.  Subtracting the two profile equations
gives
\begin{equation*}
 \partial_td_q^a+M_q(t)y\cdot\nabla d_q^a
 +a w_q^a\cdot\nabla d_q^a
 =-a w_q^a\cdot\nabla\zeta_q^0,
 \qquad d_q^a(0)=0.
\end{equation*}
Choose $s\ge8$.  Lemma~\ref{lem:profile-smooth-bounds} supplies a common
support ball and uniform $H^s$ bounds for $q\in\overline{B_h}$ and
$0\le a\le1$.  Proposition~\ref{prop:BS-toolbox}\textup{(iii)} controls
$w_q^a$ in $H^{s+1}$ uniformly.  Applying the high-order transport estimate
of Lemma~\ref{lem:transport-Hs}, together with the usual Moser product
bound, gives
\begin{equation*}
 \frac{\dd}{\dd t}\|d_q^a(t)\|_{H^{s-1}}
 \le C\|d_q^a(t)\|_{H^{s-1}}+Ca.
\end{equation*}
All constants are independent of $q$ and $0<a\le1$.  Gronwall's inequality
therefore yields
\begin{equation*}
 \sup_{q\in B_h}
 \|d_q^a\|_{C([0,\tau];H^{s-1})}\le Ca.
\end{equation*}

Both profiles have zero mass by Lemma~\ref{lem:profile-moments}, so the
difference has zero mass as well.  Applying
Proposition~\ref{prop:BS-toolbox}\textup{(iii)} to $d_q^a$ and then using
Sobolev embedding and H\"older's inequality on the fixed ball $B_R$ gives
\[
 \|w_q^a-w_q^0\|_{W^{3,1}(B_R)}
 \le C_R\|K*d_q^a\|_{H^s}\le Ca.
\]
This proves \eqref{eq:small-a}.  Finally combine it with the passive lower
bound \eqref{eq:uniform-passive} and reduce $a_*$ so that the top-order
local error is at most $Q/4$; this gives \eqref{eq:nonlinear-output}.
\end{proof}

\rightline{\scshape Y{\footnotesize INGZHE} B{\footnotesize AN}}
\medskip
{\footnotesize
	\rightline{Beijing University of Technology}
	\rightline{Beijing, 100124, P.R. China}
	\rightline{(banyingzhe@bjut.edu.cn)}
}
\medskip
\rightline{\scshape M{\footnotesize AOTUO} G{\footnotesize UO}}
\medskip
{\footnotesize
	\rightline{School of Science, Harbin University of Science and Technology}
	\rightline{Harbin 150080 P. R. China}
	\rightline{(18813122370@163.com)}
}
\medskip
\rightline{and}
\medskip

\rightline{\scshape Q{\footnotesize I} Z{\footnotesize HANG}}
\medskip
{\footnotesize
	\rightline{School of Mathematical Sciences, Harbin Engineering University}
	\rightline{Harbin 150001 P. R. China}
	\rightline{(qizhangmath@hrbeu.edu.cn)}
}
\end{document}